\documentclass{amsart}

\newtheorem{theorem}{Theorem}[section]
\newtheorem{lemma}[theorem]{Lemma}
\newtheorem{corollary}[theorem]{Corollary}

\theoremstyle{definition}
\newtheorem{definition}[theorem]{Definition}
\newtheorem{example}[theorem]{Example}

\newtheorem{proposition}[theorem]{Proposition}

\theoremstyle{remark}
\newtheorem{remark}[theorem]{Remark}

\numberwithin{equation}{section}

%%%%%%%%%%%%%%%%%%%%%%%

\def \R {\bf R}

\newcommand\Ac{\mathcal A}

\def\R{\mathcal R}

\def\tr{{\rm tr}}

\def\Jc{\mathcal J}

\def\ee{\'{e}}

\def\Bc{{\mathcal B}}
\def\gm{{g_0}}

%\newtheorem{theorem}{Teorema}
%\newtheorem{definition}{Defini\cao\ }
%\newtheorem{proposition}{Proposi\cao\ }
%\newtheorem{lemma}{Lema }
%%%%%%%%%%%%%%%%%%%%%%%%%%%%%%%%%%%%%%%%%%%%%%

\def\Cc{{\cal C}}

\def\Sc{{\mathcal S}}

\def\Tc{{\mathcal T}}

\newcommand\Rc{{\mathcal R}}
\newcommand\Lc{{\mathcal L}}
\newcommand\Fc{{\mathcal F}}

\newcommand\Uc{{\mathcal U}}

\def\Mc{{\mathcal M}}

\newcommand\Hc{{\mathcal H}}
\newcommand\Dc{{\mathcal D}}

\def\Cc{\mathcal{C}}

\def\im{\textrm{Im}}
\def\bea{\begin{eqnarray}}
\def\eea{\end{eqnarray}}
\def\be{\begin{equation}}
\def\ee{\end{equation}}

\def\Cc{\mathcal{C}}

\def\Cc{\mathcal{C}}

\def\Rcc{\stackrel{\circ}{R}}

\def\Cc{\mathcal{C}}

\def\im{\textrm{Im}}

\def\<{\langle}
\def\>{\rangle}

\def\im{\textrm{Im}}

%%%%%%%%%%%%%%%%%%%%%%%%%%

%    Absolute value notation

%    Blank box placeholder for figures (to avoid requiring any
%    particular graphics capabilities for printing this document).

\begin{document}

\title[Deformation and rigidity results]{Deformation and rigidity results for the  $2k$-Ricci tensor and the $2k$-Gauss-Bonnet curvature}

%    Information for first author
\author[T. Ca\'ula]{Tiago Ca\'ula}
\address{Unilab,
Campus da Liberdade,
Avenida da Aboli\c{c}\~ao, 3, Centro, Reden\c{c}\~ao/CE, Brazil - CEP: 62.790-000}
\email{ticaula@gmail.com}

%    Information for second author
\author[L. L. de Lima]{Levi Lopes de Lima}
\address{Federal University of Cear\'a, Department of Mathematics, Campus do Pici, Av. Humberto Monte, s/n, Bloco 914, Fortaleza/CE, Brazil - CEP: 60455-900.}
\email{levi@math.ufc.br}
%\thanks{Partially supported by CNPp, Capes and Funcap/CE.}

%    Information for third author
\author[N. L. Santos]{Newton Luis Santos}
\address{
Federal University of Piau\'{\i}, Department of Mathematics, Campus Petronio Portela,
Ininga, Teresina/PI, Brazil - CEP:  64049-550 }
\email{newtonls@ufpi.br}
%\thanks{Partially supported by Capes.}

%%    General info
\subjclass[2010]{Primary 53C25; Secondary 58J05}
%
%\date{January 1, 2001 and, in revised form, June 22, 2001.}
%
%\dedicatory{This paper is dedicated to our advisors.}
%
\keywords{$2k$-Ricci tensor, $2k$-Gauss-Bonnet curvature, deformation, rigidity, double forms, Yamabe problem}

\begin{abstract}
We present several deformation and rigidity results within the classes of closed Riemannian manifolds which either are $2k$-Einstein (in the sense that their $2k$-Ricci tensor is constant) or have constant $2k$-Gauss-Bonnet curvature. The results hold for a family of manifolds containing all non-flat space forms and the main ingredients in the  proofs are explicit formulae for the linearizations of the above invariants obtained by means of the formalism of double forms.
\end{abstract}

\maketitle

\section{Introduction}

In Riemannian Geometry it is natural to consider invariants constructed out of the curvature tensor  by
means of natural algebraic constructions (such as tensor products followed by  contractions), the simplest of these being of course the Ricci tensor and the scalar curvature. From this point of view, the problem of determining the Riemannian structures with the property that one such invariant is {\em constant} in a suitable sense stands out by its evident naturality. The purpose of this paper is precisely to present some deformation and rigidity results in the context of some Riemannian invariants with the above kind of structure.

The class of invariants considered here depend on the curvature in a polynomial fashion and are in a sense the simplest such examples. In fact, they can  be described by means  of the concept of
{\em double form}, which is simply an element of the bi-graded algebra
\[
\Ac^{\bullet,\bullet}(X)=\Ac^\bullet(X)\otimes_{\Dc(X)}\Ac^\bullet(X),
\]
where $X$ is a smooth manifold, $\Dc(X)$ is the ring of smooth functions on $X$ and
\[
\Ac^\bullet(X)=\oplus_{r\geq 0}\Ac^r(X)
\]
is the graded $\Dc(X)$-algebra of differential forms; see Section \ref{2kriccgb} for further details.
Notice that if  $g$ is a Riemannian metric on  $X$ then  $g\in \Ac^{1,1}(X)$ and its curvature tensor $R_g\in\Ac^{2,2}(X)$.
Moreover, there exists a natural {contraction operator} $c_g$, which is actually the (pointwise) adjoint of multiplication by $g$ with respect to the inner product on forms.
With this terminology at hand, the Ricci tensor of $g$ can be expressed as
\[
{\rm Ric}_g=c_gR_g,
\]
so it is natural to consider, for $k\geq 1$, the $2k$-{\em Ricci tensor} given by
\begin{equation}\label{simb}
\Rc^{(2k)}_g=c^{2k-1}_gR^k_g,
\end{equation}
an element of $\Ac^{1,1}(X)$ which is symmetric in its entries.
In this context, a metric $g$ is $2k$-{\em Einstein} if it satisfies  \begin{equation}\label{einsteincond}
\Rc^{(2k)}_g=\lambda g,
\end{equation}
for some constant $\lambda$.
One should remark that, similarly to what happens in the Einstein case ($k=1$), this condition also admits a variational interpretation;
see Proposition \ref{equivalenteinstein} below.

The study of Einstein metrics is a honorable topic in Riemannian Geometry; see
\cite{Be} for a comprehensive introduction to the subject. In particular, it is well-known
that the corresponding moduli space
always appears, for a {\em closed} smooth manifold $X$, in finite dimensional families. Moreover, in some cases it is verified that
such structures display local rigidity phenomena;  see \cite{Be} for a survey of such results. This is the case, for instance, if  $(X,g)$ is a spherical space form, which can be seen as an extension
of a   famous rigidity result due to Calabi \cite{Ca}, or  if $(X,g)$ is a hyperbolic space form, which is a local generalization of
a remarkable rigidity theorem by
Mostow \cite{Mo}.

The case $k\geq 2$, however, is a bit more complicated, essentially due to the fact that in general $\Rc^{(2k)}_g$ is homogeneous of degree $k$ in $R_g$ (or, equivalently, in the second order derivatives of $g$), which implies  that the principal symbol  of the linearization of (\ref{simb}) depends on $R_g$ for $k\geq 2$. This should be compared with the case $k=1$ mentioned above, where  this symbol depends only on the derivatives of $g$ up to first order, so that the corresponding linearization is always elliptic in a suitable {gauge}; in fact, this is precisely the information that leads to the finiteness result mentioned above. But for $k\geq 2$ the linearization is not elliptic in general and the question of
exhibiting examples of $2k$-Einstein where ellipticity  (with the consequent local finiteness of the dimension of the moduli space) is restored, acquires fundamental relevance.

In this work we single out a class of
Riemannian manifolds for which this program may be carried out in a satisfactory manner. More precisely, if $n\geq 5$  and $2\leq k< n/2$, let us denote by $\Hc_{n,k}$ the class of closed Riemanian manifolds $(X,g)$ which are  $2k$-Einstein and additionally meet the curvature condition
\begin{equation}\label{condcurva}
R_g^{k-1}=\mu_kg^{2k-2},\quad \mu_k\neq 0,
\end{equation}
which actually means that the manifold in question has {\em constant} $(2k-2)$-sectional curvature in the sense of Thorpe; see Proposition \ref{lipkill}. In particular,
$\Hc_{n,k}$ contains all space forms except the flat ones (i.e. those satisfying $R_g=0$). Our first result (Theorem  \ref{main}) says that if $(X,g)\in \Hc_{n,k}$ then the corresponding moduli space of $2k$-Einstein structures, denoted $\mathfrak E^{(2k)}(X)$, is finite dimensional at $\langle g\rangle$, the class of $g$. This follows from the fact that the linearization of (\ref{simb}) in $(X,g)$ is elliptic in a suitable gauge. Actually, a much more precise result concerning the local structure of $\mathfrak E^{(2k)}(X)$ around $\langle g\rangle$ is obtained in Theorem \ref{main4}. Also, this information is complemented with Theorem \ref{main2}, which provides examples of manifolds in
$\Hc_{n,k}$ which are rigid, as $2k$-Einstein structures,
under a certain assumption on the eigenvalues of $\Rcc_g$, the natural action of $R_g$ on $\Ac^{1,1}(X)$. This subclass of examples includes in particular all {\em nonflat} space forms (Corollary \ref{constcurv2}), so that the classical rigidity results mentioned above are shown to admit extensions to the
$2k$-Einstein context.

\begin{remark}\label{gene}
For spherical space forms, the above results were previously proved in \cite{dLS1}. Also, it is proved in Proposition \ref{equivform} below that, in the presence of (\ref{condcurva}), being $2k$-Einstein is equivalent to being Einstein. Thus, the results above can be seen as generalizations of those established  in \cite{dLS1} to Einstein manifolds with constant $(2k-2)$-sectional curvature.
\end{remark}

A further contraction of  $\Rc^{(2k)}$ yields the so-called $2k$-{\em Gauss-Bonnet curvature},
\begin{equation}\label{gb}
\Sc^{(2k)}_g=\frac{1}{(2k)!}{c}_g\Rc^{(2k)}_g,
\end{equation}
a scalar invariant of $g$ which is homogeneous of degree $k$ in its second order derivatives. These invariants
are notably ubiquitous in Differential Geometry, appearing for instance in
Weyl's expression for the volume of tubes
\cite{G} and Chern's kinematic formulae \cite{Ch} for quermassintegrals. Notice that  $\Sc^{(2)}_g=\kappa_g/2$, where $\kappa_g=c_g{\rm Ric}_g$ is the scalar curvature of $g$. Now, classically, the scalar curvature plays a fundamental role  in Conformal Geometry, in connection with the famous Yamabe problem \cite{LP}. It is thus natural to formulate the corresponding  problem for the Gauss-Bonnet curvatures: given a metric $g$ in $X$, is there $g'$ conformal to $g$ so that $\Sc^{(2k)}_{g'}$ is constant? This problem, which admits a nice variational characterization (Proposition \ref{characgbconst}), has been considered so far in case $g$ is locally conformally flat \cite{LL} \cite{GW}, for it is then equivalent to the $\sigma_k$-Yamabe problem (Proposition \ref{doisyam}). In this work we present new examples of manifolds $(X,g)$ with non-null Weyl tensor for which the problem has a positive solution. More precisely, if $\Hc'_{n,k}$ represents the subclass of manifolds  $(X,g)$ in $\Hc_{n,k}$ isometrically distinct from round spheres, then it is shown in Theorem \ref{main5}) that any metric sufficiently close to $g$
is conformally equivalent to a metric with constant $2k$-Gauss-Bonnet curvature. We remark that the corresponding result for space forms has been previously verified in \cite{dLS2}.

This paper is organized as follows. In Section \ref{2kriccgb} we review the basic properties of the Riemannian invariants mentioned above. The deformation and rigidity results for $2k$-Einstein structures are stated in Section \ref{def2keinstein} and proved in Section \ref{demresrig}. This uses the expression for the linearization of (\ref{simb}) obtained in Section \ref{lineariricc}. Finally, the deformation result (the local Yamabe problem) for the Gauss-Bonnet curvatures is presented in Section \ref{yamabegb} and proved in Section \ref{demmain5}. Again, this uses an expression for the linearization of $\Sc^{(2k)}_g$ obtained in Section \ref{lin}.

\section{The $2k$-Ricci tensors and the Gauss-Bonnet curvatures}
\label{2kriccgb}

In this section we review the definition of an array of Riemannian invariants
that generalize the Ricci tensor ${\rm Ric}_g$, the scalar curvature $\kappa_g$ and the Einstein tensor
\begin{equation}\label{einstt}
E_g={\rm Ric}_g-\frac{\kappa_g}{2}g
\end{equation}
of a Riemannian manifold $(X,g)$ and collect their basic properties. As always, we assume that $X$ is closed. Also, we adhere to the sign conventions of \cite{Be}.

In what follows, $\Tc^{(r,s)}(X)=\Gamma(\otimes^{(r,s)}X)$ is the space of smooth tensors of type $(r,s)$, so that $S^r(X)\subset \Tc^{(r,0)}(X)$ will denote the space of symmetric {\em covariant} tensors of degree $r$ and $\Ac^r(X)\subset \Tc^{(r,0)}(X)$ is the space of differential $r$-forms. More generally, if $\mathcal E$ is a metric vector bundle over $X$ endowed with a compatible connection, we represent by $\Ac^{r}(X;\mathcal E)$ the space of $\mathcal E$-valued $r$-forms over $X$. We also recall the {\em divergence operator}
${\delta_g}:S^r(X)\to S^{r-1}(X)$ given by
\[
({\delta_g} T)_{i_2\cdots i_{r}}= -g^{ij}\nabla_i T_{ji_2\cdots i_{r}}=\nabla_iT^i_{i_2\cdots i_{r}},
\]
where
$\nabla_iT=\nabla_{\partial_i}T$ is the covariant derivative.
We also remark that twice contraction of the differential Bianchi identity yields
\[
\delta_g{\rm Ric}_g+\frac{d\kappa_g}{2}=0,
\]
or  equivalently, the Einstein tensor is divergence free:
\begin{equation}\label{divinull}
\delta_gE_g=0.
\end{equation}
Our aim now is to point out the existence of a natural family of  divergence free tensors
$L^{(2k)}_g\in S^2(X)$, $1\leq k\leq [(n-1)/2]$, the so-called {\em Lovelock tensors}, which generalize the Einstein tensor in the sense that $L^{(2)}_g$ is proportional to $E_g$.

Recall that given a vector field $z\in \mathcal X(X):=\Tc^{(0,1)}(X)$ and a local volume element $\Omega$, we have
\[
d i_z\Omega=d i_z\Omega+ i_zd\Omega=\Lc_z\Omega=({\rm div}\,z)\Omega,
\]
where $i_z$ is contraction with $z$ and  $\Lc_z$ is Lie derivative. In this way, the correspondence $z\leftrightarrow \omega=i_z\Omega$ defines an isomorphism  between  $T^*X=TX$ and $\Lambda^{n-1}(X)$ so that  $\delta_g z=0$ if and only if $d\omega=0$. Similarly, the  correspondence
$
z_1\otimes z_2 \leftrightarrow i_{z_1}\Omega\otimes i_{z_2}\Omega
$
defines an  isomorphism between  $\otimes^{(0,2)}(X)$ and $\Lambda^{n-1}(X)\otimes \Lambda^{n-1}(X)$, the bundle of  $(n-1)$-forms  taking values on $(n-1)$-forms, which is well defined even if $X$ is not orientable.
Moreover, if ${\rm Sym}^2(X)\subset \otimes^{(0,2)}(X)$ is the space of {\em symmetric} $(0,2)$-tensors then we get an isomorphism between ${\rm Sym}^2(X)$ and ${\rm Sym}^2(\Lambda^{n-1}(X))$, where, by definition,  $\eta\in{\rm Sym}^2(\Lambda^{r}(X))\subset \Lambda^{r}(X)\otimes \Lambda^{r}(X)$ if and only if
\[
\eta(v_1\wedge\ldots\wedge v_{r}\otimes w_1\wedge\ldots\wedge
w_{r})=\eta(w_1\wedge\ldots\wedge w_{r}\otimes v_1\wedge\ldots\wedge
v_{r}).
\]
In what follows, we shall write
\[
S^2(\Lambda^r(X))=\Gamma({\rm Sym}^2(\Lambda^r(X))).
\]

A simple computation shows that
$T\in S^2(X)$ satisfies ${\delta_g}T=0$ if and only if the corresponding section $\eta\in S^2(\Lambda^{n-1}(X))\subset \Ac^{n-1}(X,\Lambda^{n-1}(X))$ meets $d^{\nabla}\eta=0$, where $d^\nabla$ is the standard exterior covariant derivative.
Noticing that $g\in S^2(X)=S^2(\Lambda^1(X))$ and $R\in S^2(\Lambda^{2}(X))$, let us define
\begin{equation}\label{lov}
\tilde L^{(2k)}_g= R_g{\wedge}\stackrel{k\, {\rm times}} {\cdots} \wedge R_g\wedge g\wedge \cdots \wedge g\in \Sc^2(\Lambda^{n-1}(X)),\quad 1\leq k\leq \left[\frac{n-1}{2}\right].
\end{equation}
Since $d^{\nabla}g=0$ (metric compatibility) and $d^\nabla R_g=0$ (Bianchi identity), we see that $d^\nabla \tilde L^{(2k)}_g=0$, so that the corresponding tensor $L^{(2k)}_g\in S^2(X)$ satisfies $\delta_gL^{(2k)}_g=0$. These are precisely the {\em  Lovelock tensors} \cite{Lo}.

\begin{example}\label{hyper}
Assume that $X^n\hookrightarrow\mathbb R^{n+1}$ isometrically and let $A\in S^2(X)$ be the corresponding Weingarten map. Then the Gauss equation says that $R=\frac{1}{2}A\wedge A$ and (\ref{lov}) becomes
\[
\tilde L^{(2k)}= \frac{1}{2^k}A{\wedge}\stackrel{2k} {\cdots} \wedge A\wedge g\wedge \cdots \wedge g.
\]
A computation shows that
\[
L^{(2k)}=c_{n,k}P_{2k},
\]
where $P_r=S_rI-P_{r-1}A$ is the {\em Newton tensor} of order $r$ and $S_r$ is the ${r}^{\rm th}$-elementary symmetric function in the eigenvalues of $A$. In particular, if $r=2$ we have $S_2=\kappa/2$ and $P_1A=S_1A-A^2={\rm Ric}$, so that $P_2=-E$.
\end{example}

%Moreover, contraction of (\ref{local}) in  $i$ e $j$  yields the scalar
%\begin{equation}\label{fully}
%\kappa^{(2k)}_g=e_{n,k}\delta^{i_1i_2\ldots i_{2k-1}i_{2k}}_{j_1j_2\ldots j_{2k-1}j_{2k}}R_{i_1i_2}^{j_1j_2}\ldots R_{i_{2k-1}i_{2k}}^{j_{2k-1}j_{2k}}
%\end{equation}
%the so-called $2k$-{\em Gauss-Bonnet curvature}. Notice that $\kappa^{(2)}_g$ is the total contraction of $R_g$, being thus a  multiple of $\kappa_g$.

\begin{remark}\label{span}
The Lovelock  tensors admit a local expansion of the form
\begin{equation}\label{local}
(L^{(2k)}_g)^i_j=d_{n,k}\delta^{ii_1i_2\ldots i_{2k-1}i_{2k}}_{jj_1j_2\ldots j_{2k-1}j_{2k}}R_{i_1i_2}^{j_1j_2}\ldots R_{i_{2k-1}i_{2k}}^{j_{2k-1}j_{2k}},
\end{equation}
where $d_{n,k}$ is a universal  constant, $\delta$ is the generalized Kronecker delta and $R^{ij}_{kl}$ are the coefficients of $R_g\in\Tc^{(2,2)}(X)$ with respect to a local orthonormal frame. Thus, $L^{(2)}_g$ is  proportional to the Einstein tensor, as desired.
Moreover, given a metric $g$ in $X$, it is proved in \cite{Lo} that  the Lovelock tensors span the space of natural, second order and  divergence free elements (with respect to $g$) in $S^2(X)$; see \cite{NN} for a modern proof.
\end{remark}

It turns out that the above concepts can be reformulated in  terms of the notion of {\em double form}. Let us start by considering a smooth manifold $X$ of dimension $n\geq 3$ and recalling that
$\Ac^r(X)$ is a module over the ring $\Dc(X)$ of smooth functions defined on $X$.

\begin{definition}\label{doubleforms}
The space of {\em double forms} of bi-degree
$(r,s)$ is given by
\[
\Ac^{r,s}(X)=\Ac^r(X)\otimes_{\Dc(X)}\Ac^s(X).
\]
Equivalently,
\[
\Ac^{r,s}(X)=\Gamma(\Lambda^{r,s}(X)),
\]
where
\[
\Lambda^{r,s}(X)=\Lambda^r(X)\otimes \Lambda^s(X).
\]
\end{definition}

We also set
\[
\Ac^{\bullet,\bullet}(X)=\oplus_{r,s\geq 0}\Ac^{r,s}(X).
\]
We thus see  that $\Ac^{\bullet,\bullet}(X)$ is a bi-graded associative algebra,
the so-called  {\em algebra of double forms}.

For instance, any bilinear form on tangent vectors is a
$(1,1)$-form.
In particular, a Riemannian metric $g$ on $X$
is a $(1,1)$-form.
Moreover, the curvature tensor $R_g$ of $g$ can be seen as a
$(2,2)$-form. In fact, if we define ${\mathcal C}^r(X)\subset \Ac^{r,r}(X)$
as being the space of  $(r,r)$-forms satisfying  the symmetry condition
\[
\omega(x_1\wedge\ldots\wedge x_{r}\otimes y_1\wedge\ldots\wedge
y_{r})=\omega(y_1\wedge\ldots\wedge y_{r}\otimes x_1\wedge\ldots\wedge
x_{r}),
\]
then any bilinear form ($g$, in particular) lies in  ${\mathcal C}^1(X)$, and $R_g\in{\mathcal C}^2(X)$\footnote{In this notation, $\Cc^1(X)=S^2(X)$.}. Detailed accounts of the theory of double forms can be found in
\cite{L1}, \cite{L2} and \cite{G}.

Notice that multiplication by the metric defines a linear map
$g:\Ac^{r-1,s-1}(X)\to\Ac^{r,s}(X)$.
Also, the  {\em contraction operator} $c_g:\Ac^{r,s}(X)\to\Ac^{r-1,s-1}(X)$ is given by
\[
(c_g\omega)(x_1\wedge\ldots\wedge x_{r-1}\otimes y_1\wedge\ldots\wedge
y_{r-1})=\sum_i \omega(e_i\wedge x_1\wedge\ldots\wedge
x_{r-1}\otimes e_i\wedge y_1\wedge\ldots\wedge y_{r-1}),
\]
where $\{e_i\}$ is a local orthonormal frame. It is easily shown that
$g$ and $c_g$
are adjoints to each other with respect to the natural inner product defined
in
\[
\Lambda^{\bullet,\bullet}(X)_p=\oplus_{r,s\geq 0}\Lambda^{r,s}(X)_p, \quad p\in X.
\]
Moreover, these operators satisfy the following {commutation rule}, established in \cite{L2}:
for $\eta\in\Ac^{r,s}(X)$ there holds
\begin{eqnarray}\label{commutation}
\frac{1}{m!}c^l_gg^m\eta & = & \frac{1}{m!}g^mc^l_g\eta+\nonumber\\
& & \quad  +\sum_{q=1}^{\min \{l,m\}} C^l_q\prod_{i=0}^{q-1}(n-r-s+l-m-i)\frac{g^{m-q}}{(m-q)!}c^{l-q}_g\eta,
\end{eqnarray}
where $C^l_q$ is the usual binomial coefficient.
In particular, the following special case deserves some attention:
\begin{equation}\label{kulkarni}
c_gg\eta=gc_g\eta+(n-r-s)\eta,\qquad \eta\in\mathcal A^{r,s}(X).
\end{equation}

\begin{remark}\label{const}{
Using the language of double forms, that a Riemannian manifold  $(X,g)$ has {\em constant} sectional curvature $\mu\in\mathbb R$ is equivalent to the validity of the identity
$R_g=\frac{\mu}{2}g^2$.}
\end{remark}

The contraction operator can be used to rewrite the Ricci tensor and the scalar curvature of $(X,g)$ as  ${\rm Ric}_g=c_gR_g$ and ${\kappa}_g=c^2_gR_g$. This motivates the following definition.

\begin{definition}\label{rew}
For $1\leq k\leq n/2$ we define the $2k$-{\em Ricci tensor} and the $2k$-{\em  Gauss-Bonnet curvature}, respectively, by
\begin{equation}\label{def}
{\mathcal R}^{(2k)}_g=c^{2k-1}_gR^k_g,\quad {\mathcal S}^{(2k)}_g=\frac{1}{(2k)!}c^{2k}_gR^k_g.
\end{equation}
\end{definition}

Accordingly, it is now possible to rewrite the Lovelock tensor, up to a universal constant, as $L^{(2k)}_g=c'_{n,k}{\mathcal J}^{(2k)}_g$, where
\begin{equation}\label{locve}
{\mathcal J}^{(2k)}_g=\frac{{\mathcal R}^{(2k)}_g}{(2k-1)!}-{\mathcal S}^{(2k)}_g g.
\end{equation}
This emphasizes the similarity with the Einstein tensor in (\ref{einstt}).

The following definition plays a central role in this work.

\begin{definition}\label{2keinstein}
\cite{L2} We say that  $(X,g)$ is $2k$-{\em Einstein} if there exists a smooth function  $\lambda$ on $X$ such that
\begin{equation}\label{einsteincond}
{\mathcal R}_g^{(2k)}=\lambda g.
\end{equation}
\end{definition}

Thus, $2$-Einstein means precisely that  $(X,g)$ is  Einstein in the usual sense. We will see in  Proposition \ref{equivalenteinstein} that  if $X$ is closed then $2k$-Einstein metrics are critical points for the {\em Hilbert-Einstein-Lovelock functional} given by
\[
{\mathcal F}^{(2k)}(g)=\int_X {\mathcal S}^{(2k)}_g\nu_g,
\]
restricted to the space ${\mathcal M}_1(X)$ of unit volume metrics on $X$. Here, $\nu_g$ is the volume element of $g$.  In particular, examples of $2k$-Einstein manifolds include space forms and isotropically irreducible homogeneous manifolds \cite{Be}. Moreover, if $2k=n$ then {\em any} metric  on $X$ is $2k$-Einstein, since in this case ${\mathcal S}^{(n)}_g$ is, up to a constant, the Gauss-Bonnet integrand. Thus, we may assume from now on that
$n>2k$.

\begin{proposition}\label{constante}
If $n>2k$ and $(X,g)$ is $2k$-Einstein then $\lambda$ is constant. In particular,
$\Sc^{(2k)}_g$ is constant.
\end{proposition}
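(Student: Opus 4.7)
The plan is to deduce constancy of $\lambda$ (and hence of $\mathcal{S}^{(2k)}_g$) from the divergence-free property of the Lovelock tensor $L^{(2k)}_g$, established earlier in the excerpt via the double-form identity $d^\nabla \tilde L^{(2k)}_g = 0$. This will play, for general $k$, the role that the twice-contracted Bianchi identity $\delta_g E_g = 0$ plays in deriving constancy of the scalar curvature from that of $\lambda$ in the classical Einstein case $k=1$.

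First I would compute $\mathcal{S}^{(2k)}_g$ in terms of $\lambda$: noting that $c_g g = n$ (immediate from the definition of $c_g$ in an orthonormal frame), the $2k$-Einstein condition yields
\[
\mathcal{S}^{(2k)}_g = \frac{1}{(2k)!}\, c_g \mathcal{R}^{(2k)}_g = \frac{1}{(2k)!}\, c_g(\lambda g) = \frac{n\lambda}{(2k)!}.
\]
Substituting this together with $\mathcal{R}^{(2k)}_g = \lambda g$ into the definition (\ref{locve}) of $\mathcal{J}^{(2k)}_g$ gives
\[
\mathcal{J}^{(2k)}_g \;=\; \frac{\lambda g}{(2k-1)!} - \frac{n\lambda}{(2k)!}\, g \;=\; \frac{(2k-n)\lambda}{(2k)!}\, g,
\]
a pointwise scalar multiple of $g$.

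Next I would invoke that $L^{(2k)}_g$, and therefore $\mathcal{J}^{(2k)}_g$ (which differs from it by a nonzero universal constant), is divergence-free. A one-line calculation from the definition of $\delta_g$ together with metric compatibility $\nabla g = 0$ gives $\delta_g(fg) = -df$ for every $f\in\mathcal{D}(X)$. Applied to $f = (2k-n)\lambda/(2k)!$, this yields
\[
0 \;=\; \delta_g \mathcal{J}^{(2k)}_g \;=\; \frac{n-2k}{(2k)!}\, d\lambda.
\]
Since $n > 2k$, we conclude $d\lambda = 0$, so $\lambda$ is locally constant and hence constant on the connected closed manifold $X$; by the first display, $\mathcal{S}^{(2k)}_g = n\lambda/(2k)!$ is then constant as well.

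There is no genuine obstacle here: the whole proof is a transparent reduction to the divergence-free property of the Lovelock tensor, with the computational content amounting to a few contractions with $g$ and the identity $\delta_g(fg) = -df$. The dimensional restriction $n > 2k$ enters at exactly one point, namely the coefficient $n-2k$ that prevents the equation $\delta_g\mathcal{J}^{(2k)}_g = 0$ from being vacuous; this is perfectly consistent with the excerpt's earlier remark that when $2k=n$ every metric on $X$ is $2k$-Einstein because $\mathcal{S}^{(n)}_g$ is, up to a constant, the Gauss-Bonnet integrand.
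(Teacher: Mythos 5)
Your proof is correct and follows essentially the same route as the paper's: both rest on the divergence-free property $\delta_g\Jc^{(2k)}_g=0$ together with the trace identity $\lambda=\tfrac{(2k)!}{n}\Sc^{(2k)}_g$, and both isolate the factor $n-2k$ as the reason the hypothesis $n>2k$ is needed. The only difference is organizational — you substitute the trace identity into $\Jc^{(2k)}_g$ before taking the divergence, whereas the paper first deduces that $\lambda-(2k-1)!\,\Sc^{(2k)}_g$ is constant and then combines with the trace identity — which is immaterial.
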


\begin{proof}
Notice that $\delta_g\Jc^{(2k)}_g=0$ means that
\[
\delta_g\Rc^{(2k)}+(2k-1)!d\Sc^{(2k)}=0,
\]
and combining this with (\ref{einsteincond}) we then see that
the function
\[
\mu=\lambda-(2k-1)!\Sc^{(2k)}
\]
is constant. On the other hand,
since
\begin{equation}\label{cont}
{\rm tr}_g {\mathcal R}^{(2k)}_g=\langle c^{2k-1}_gR^k_g,g\rangle=c^{2k}_gR^k_g=(2k)!{\mathcal S}^{(2k)}_g,
\end{equation}
we have, again by (\ref{einsteincond}),
\begin{equation}\label{const}
\lambda=\frac{(2k)!}{n}{\mathcal S}^{(2k)}_g,
\end{equation}
and the result follows.
\end{proof}

\begin{example}\label{loveexamples}
Examples of $2k$-Einstein manifolds appear as black hole solutions in Lovelock gravity \cite{CTZ}. For instance, the manifold $\mathbb R\times I\times \mathbb R^{n-1}$ with coordinates $(t,r,\theta)$, where $I\subset (0,+\infty)$ is an interval, carries such a metric, namely,
\[
g=\pm F(r)dt^2+g_0,
\]
where
\[
g_0=F(r)^{-1}dr^2+r^2d\Theta^2,
\]
$d\Theta^2$ is the round metric in $\mathbb S^{n-1}$ and
\[
F(r)=1+\epsilon r^2-2mr^{2-\frac{n}{k}}.
\]
Here, $m\in \mathbb R$ is the \lq total mass\rq\, of the solution and $\epsilon=0$ or $\epsilon=\pm 1$ (for a non-vanishing cosmological constant). We also note that the Riemannian metric $g_0$ on the space-like slice $t=0$ has {\em constant} $2k$-Gauss-Bonnet curvature. For $k=1$ we recover the so-called Schwarzschild-type solutions  of Einstein gravity.
\end{example}

The formalism of double forms can also be used to single out a class of Riemannian manifolds  that will play a central role in this work.

\begin{definition}\label{thorpe}\cite{T}
Given $k\geq 2$, we say that  $(X,g)$ {\em has} $(2k-2)$-{\em constant\, secctional\, curvature} if there exists $\mu_k\in\mathbb R$ such that
\begin{equation}\label{thorpe2}
R^{k-1}=\mu_k g^{2k-2}.
\end{equation}
\end{definition}

The case $k=2$ corresponds to space forms; see Remark \ref{const}. In general, the condition (\ref{thorpe2}) can be geometrically interpreted in the following way. Given a tangent $(2k-2)$-plane $\mathfrak p\subset T_pX$, $p\in X$, there exists a neighborhood $U\subset \mathfrak p$ containing the origin
such that $\exp_p U\subset X$ is an embedded submanifold which is totally geodesic at
$p$. In this way, we can associate to each $\mathfrak p$ the  $(2k-2)$-Gauss-Bonnet curvature
of $\exp_p U$ at $p$, which turns out to be an invariant of $(X,g)$ at $p$, termed the {\em $(2k-2)$-sectional\, curvature} of $X$ at $p$ in the direction of  $\mathfrak p$, and denoted by $K(p,\mathfrak p)$\footnote{In the literature, this invariant is also called the {\em Lipschitz-Killing curvature}.}.

\begin{proposition}\label{lipkill}\cite{T}
For a Riemannian manifold $(X,g)$, (\ref{thorpe2}) happens if and only if
$K(p,\mathfrak p)$ does {\em not} depend on the pair $(p,\mathfrak p)$.
\end{proposition}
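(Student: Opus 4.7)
The plan is to express the Lipschitz--Killing curvature $K(p,\mathfrak{p})$ intrinsically as a pointwise evaluation of the double form $R_g^{k-1}$, which reduces the statement to an algebraic rigidity result for symmetric curvature-like double forms. Fix $p\in X$, a $(2k-2)$-plane $\mathfrak{p}\subset T_pX$, and an orthonormal basis $\{e_1,\ldots,e_{2k-2}\}$ of $\mathfrak{p}$; set $E=e_1\wedge\cdots\wedge e_{2k-2}$. Since $\exp_pU$ is totally geodesic at $p$, its second fundamental form vanishes there and its intrinsic curvature tensor at $p$ coincides with the restriction $R_g|_{\mathfrak{p}}$. Applying Definition \ref{rew} to this $(2k-2)$-dimensional submanifold and iterating the contraction over the chosen basis (the only surviving terms being those indexed by permutations of $\{1,\ldots,2k-2\}$), I would obtain
\[
K(p,\mathfrak{p})=\frac{1}{(2k-2)!}\,c^{2k-2}_g(R_g|_{\mathfrak{p}})^{k-1}=R_g^{k-1}(E,E).
\]
The same contraction scheme applied to the double form $g^{2k-2}$ gives $g^{2k-2}(E,E)=(2k-2)!\,\det(g(e_i,e_j))=(2k-2)!$.

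The ``only if'' direction is then immediate: substituting (\ref{thorpe2}) yields $K(p,\mathfrak{p})=\mu_k(2k-2)!$, independent of $(p,\mathfrak{p})$. For the converse, I would invoke an algebraic structure theorem of Thorpe \cite{T}: if a symmetric double form $\omega\in\mathcal{C}^{q}$ at a point satisfies the first Bianchi identity and $\omega(F,F)$ equals a common scalar $\kappa$ on every orthonormal $q$-frame $F$, then $\omega=(\kappa/q!)\,g^q$. The Bianchi-type symmetries of $R_g$ are preserved under the double-form product, so $R_g^{k-1}\in\mathcal{C}^{2k-2}(X)$ satisfies the first Bianchi identity at every point. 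The hypothesis that $K(p,\mathfrak{p})\equiv\kappa$ on $X$, combined with the identification above, then forces $R_g^{k-1}=(\kappa/(2k-2)!)\,g^{2k-2}$ pointwise, which is exactly (\ref{thorpe2}) with $\mu_k=\kappa/(2k-2)!$.

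The main technical obstacle is Thorpe's algebraic rigidity statement, which must recover the entire symmetric double form $\omega$ of bi-degree $(q,q)$ from its values on decomposable unit vectors. The standard route is to polarize the ``sectional curvature'' function $F\mapsto\omega(F,F)$ and to use the first Bianchi identity to cancel the cross terms produced by the polarization, forcing $\omega$ to be fully determined by its values on orthonormal $q$-frames. A secondary point to verify is that the first Bianchi identity is indeed preserved under the double-form product; this is what allows the argument to go through for all $k\geq 2$ and follows from the antisymmetrization built into the exterior structure of $\mathcal{A}^{\bullet,\bullet}(X)$.
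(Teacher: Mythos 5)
The paper does not actually prove this proposition; it is stated as a citation to Thorpe \cite{T}, so there is no internal argument to compare against. Your reconstruction follows exactly the route of the cited source, and its two computational pillars are correct: since $\exp_p U$ is totally geodesic at $p$, the Gauss equation gives that its intrinsic curvature at $p$ is $R_g|_{\mathfrak p}$, the restriction commutes with the double-form product, and a full contraction of a $(q,q)$-form on a $q$-dimensional space picks up exactly the $q!$ permutations of the single increasing multi-index, so $K(p,\mathfrak p)=R_g^{k-1}(E,E)$ and $g^{2k-2}(E,E)=(2k-2)!$ with the paper's normalizations. The "only if" direction is then immediate, and your observation that global (not merely pointwise) constancy of $K$ is what produces a single constant $\mu_k$ correctly sidesteps any Schur-type issue. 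Your verification that $R_g^{k-1}$ satisfies the first Bianchi identity is also legitimate: the first Bianchi sum acts as a derivation on the algebra of double forms (this is in Labbi \cite{L1}), so products of Bianchi-closed forms are Bianchi-closed.

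The one place where you are leaning on a black box is the converse's algebraic rigidity lemma: that a symmetric, Bianchi-closed $(q,q)$-form taking a constant value on all orthonormal $q$-frames must be $(\kappa/q!)\,g^q$. This is genuinely Thorpe's theorem and is the entire content of the "if" direction, so citing it is defensible (the paper does no more), but your sketch of its proof is too casual. For $q\geq 4$ the decomposable unit $q$-vectors form a thin subvariety of the unit sphere in $\Lambda^q T_pX$ (the Grassmann cone), so knowing the quadratic form there determines almost nothing without the Bianchi identity; the counterexample $\omega=\star$ on $\Lambda^2$ of a $4$-manifold (which vanishes on the Pl\"ucker quadric but is not a multiple of $g^2$) shows the hypothesis cannot be dropped even for $q=2$. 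The polarization-plus-Bianchi cancellation is a multi-step combinatorial argument for higher $q$, not a two-line polarization, and if you intend the proof to be self-contained you would need to carry it out or give a precise pointer into \cite{T}.
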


We denote by  $\Mc(X)$ the set
of smooth Riemannian metrics on $X$ and by $\Mc_1(X)$ the subset of unit volume metrics.
With respect to the $C^\infty$ compact-open topology, $\Mc(X)$ is an open convex cone which has $\Mc_1(X)$ as a basis.
In particular, if $g\in \Mc(X)$ and $h\in S^2(X)$ then $g+th\in\Mc(X)$ if $t\in(-\epsilon,\epsilon)$ with $\epsilon>0$ small enough. In this way, if $g\mapsto B_g$ is a Riemannian invariant (taking values in some open subset of the space of sections of some vector bundle) it makes sense to define its  {\em linearization} at $g$ in the direction of $h$ by
\begin{equation}\label{lineari}
\dot B_gh=\lim_{t\to 0}\frac{B_{g+th}-B_g}{t}.
\end{equation}

The following proposition describes the well-known formulae for the linearizations of the Ricci tensor and the scalar curvature. For this we need to introduce the
{\em Lichnerowicz Laplacian}, $\Delta_L:S^2(X)\to S^2(X)$,
\begin{equation}\label{lich}
\Delta_Lh=\nabla^*\nabla h +{\rm Ric}_g\circ h+h\circ {\rm Ric}_g-2\!\Rcc_g\!\!h,
\end{equation}
where $\nabla^*\nabla$ is the Bochner Laplacian,
\begin{equation}
(h\circ k)(x,y)=\sum_{i=1}^n h(x,e_i)k(e_i,y), \label{jj}
\end{equation}
and
\begin{equation}\label{jjj}
(\Rcc_g\!\!h)(x,y)=\sum_{i=1}^nh(R_g(x,e_i)y,e_i),
\end{equation}
with $\{e_i\}$ being an orthonormal frame. We also need the
{\em Bianchi operator} $\beta_g:S^2(X)\to \Ac^1(X)$,
\begin{equation}\label{bianchidef}
\beta_gh=\delta_gh+\frac{1}{2}d{\rm tr}_gh.
\end{equation}

\begin{proposition}\label{linearcomp}
If $g\in \Mc(X)$ and $h\in S^2(X)$ then there holds
\begin{equation}\label{ricciline}
\dot{\rm Ric}_gh=\frac{1}{2}\left(\Delta_Lh-\Lc_{(\beta_gh)^{\sharp}}g\right),
\end{equation}
where $\omega^\sharp\in\mathcal X(X)$ is the vector field dual to  $\omega\in\Ac^1(X)$ and $\Lc$ is Lie derivative. Moreover,
\begin{equation}\label{curvscarvar}
\dot\kappa_gh=\Delta_g{\rm tr}_gh+\delta_g\delta_gh-\langle{\rm Ric}_g,h\rangle,
\end{equation}
where $\Delta_g$ is the metric Laplacian.
\end{proposition}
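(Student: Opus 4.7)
The plan is to follow the standard coordinate computation, starting from Palatini's identity for the variation of the Ricci tensor. Since the difference of two connections is a tensor, the variation $\dot\Gamma^k_{ij}$ is a well-defined $(1,2)$-tensor obtained by differentiating the Koszul formula, namely
\[
\dot\Gamma^k_{ij}=\frac{1}{2}g^{kl}\left(\nabla_ih_{jl}+\nabla_jh_{il}-\nabla_lh_{ij}\right).
\]
Working at a point in normal coordinates and then reading the result tensorially, the Palatini formula $\dot R_{ij}=\nabla_k\dot\Gamma^k_{ij}-\nabla_i\dot\Gamma^k_{kj}$ gives an intrinsic expression for $\dot{\rm Ric}_gh$.

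Next, I would substitute the expression for $\dot\Gamma$ into this identity and commute the resulting second covariant derivatives by means of the Ricci identity. The purely second-order part then assembles into the rough Laplacian $\nabla^*\nabla h$, while the curvature terms produced by the commutations organize themselves, after symmetrization, into $\mathrm{Ric}_g\circ h+h\circ\mathrm{Ric}_g-2\Rcc_gh$, yielding $\Delta_Lh$ as defined in \eqref{lich}. The remaining first-order derivative contributions can be rewritten as
\[
\nabla_i(\delta_gh)_j+\nabla_j(\delta_gh)_i+\tfrac{1}{2}\left(\nabla_i\nabla_j\tr_gh+\nabla_j\nabla_i\tr_gh\right),
\]
which is precisely $\Lc_{(\beta_gh)^\sharp}g$ upon recognizing $\beta_gh$ from \eqref{bianchidef}. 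This delivers \eqref{ricciline}.

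For the scalar curvature, I would start from $\kappa_g=\tr_g\mathrm{Ric}_g$ and use $\dot g^{ij}=-h^{ij}$ to write
\[
\dot\kappa_gh=-\langle\mathrm{Ric}_g,h\rangle+\tr_g\dot{\mathrm{Ric}}_gh.
\]
Tracing \eqref{ricciline} is then direct: the curvature corrections in $\Delta_Lh$ mutually cancel because $\tr_g(\mathrm{Ric}_g\circ h)=\tr_g(\Rcc_gh)=\langle\mathrm{Ric}_g,h\rangle$, so that $\tr_g\Delta_Lh=\Delta_g\tr_gh$; meanwhile the trace of the Lie derivative term equals $-2\delta_g\beta_gh=-2\delta_g\delta_gh-\delta_g d\tr_gh$, and expanding via \eqref{bianchidef} combines with the previous line to give \eqref{curvscarvar}.

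The main obstacle will be the bookkeeping in the middle step: substituting $\dot\Gamma$ into the Palatini formula produces second derivatives of $h$ in several orders, and it takes some care with the Ricci identity to check that the curvature contributions assemble exactly with the coefficients prescribed by \eqref{lich}, and not with spurious Weyl-type remainders. Once this is in place, both formulas follow cleanly.
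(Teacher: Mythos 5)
The paper does not prove this proposition at all: it is quoted as the ``well-known'' linearization formula (it is Besse, \emph{Einstein manifolds}, Theorem 1.174, whose sign conventions the paper adopts), so there is no argument to compare against. Your Palatini-identity computation is the standard derivation and is correct: the variation of the Christoffel symbols, the Ricci identity reassembling the curvature commutators into ${\rm Ric}_g\circ h+h\circ{\rm Ric}_g-2\Rcc_g h$, and the identification of the first-order block with $\Lc_{(\beta_gh)^{\sharp}}g$ via $(\delta_g^*\omega)_{ij}=\tfrac12(\nabla_i\omega_j+\nabla_j\omega_i)$ all check out, as does the trace computation using $\tr_g(\Rcc_g h)=\langle{\rm Ric}_g,h\rangle$ and $\delta_g d=\Delta_g$ on functions. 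The only cosmetic slip is that the displayed first-order expression enters $\dot{\rm Ric}_gh$ with an overall factor $-\tfrac12$ (absorbed by the $\tfrac12$ in (\ref{ricciline})), which your wording elides but your subsequent trace computation uses correctly.
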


If $D(X)$ is the group of smooth diffeomorphisms of $X$, then there exists a natural action $\xi:(\mathbb R^+\times D(X))\times \Mc(X)\to \Mc(X) $,
\[
\xi((t,\phi),g)=t^2\phi^*g.
\]
Obviously, two metrics in an orbit of this action have the same geometric properties.
We can also consider the restricted action
$\xi_1:D(X)\times\Mc(X)\to\Mc(X)$, with $\xi_1=\xi|_{\{1\}\times D(X)}$.
We thus see that isometry classes of metrics correspond
to elements of $\Mc(X)/D(X)$ and {\em globally} homothetic classes of metrics correspond to elements of $\Mc_1(X)/D(X)$. The elements of $\Mc(X)/D(X)$ are
called {\em Riemannian structures}.

A basic problem in Riemannian  Geometry  consists of understanding the set of Riemannian structures in  a given closed manifold  $X$ satisfying some geometric condition (Einstein, $2k$-Einstein, constant Gauss-Bonnet curvature, etc.). With this goal in mind,
it is crucial to understand the structure of the orbit space for the above actions. In fact, here we only need the infinitesimal picture so
we start by noticing that, at least formally, the tangent space
to the orbit
\[
O(g)=\{\xi_1(\phi,g);\phi\in D(X)\},\quad g\in \Mc(X),
\]
is given by
\[
T_gO(g)=\{\Lc_{\omega^{\sharp}}g;\omega\in \Ac^1(X)\}.
\]
Thus, $T_gO(g)={\rm im}\,\delta^*_g$, where $\delta^*_g:\Ac^1(X)\to S^2(X)$ is given by
\[
\delta^*_g\omega=\frac{1}{2}\Lc_{\omega^\sharp}.
\]
The notation for $\delta^*_g$ is justified by the fact that this is the $L^2$ adjoint of $\delta_g:S^2(X)\to\Ac^1(X)$.

Locally, we have
\[
(\delta^*_g\omega)_{ij}=\frac{1}{2}(\nabla_i\omega_j+\nabla_j\omega_i),
\]
which implies that the principal symbol of $\delta^*_g$ is injective (outside of the zero section). It follows that  $\delta_g\delta^*_g:S^2(X)\to S^2(X)$ is elliptic and an argument due to Berger and Ebin \cite{BE} gives the decomposition
\begin{equation}\label{be}
S^2(X)={\rm im}\,\delta^*_g\oplus\ker  \delta_g,
\end{equation}
which is orthogonal with respect to the $L^2$  inner product $(\,,)$.
Since $T_g\Mc(X)=S^2(X)$,  (\ref{be}) says that the orthogonal  complement of $T_gO(g)$ in $T_g\Mc(X)$ is $\ker \delta_g$.

\begin{remark}\label{weitexot}
{\rm The operators  $\delta_g$, $\delta_g^*$ and $\Rcc_g$ appear in
a Weitzenb\"ock type decomposition associated to the operator
$S_r:\Ac^r(X)\to \Ac^{r+1}(X)$ defined by
\[
(S_r\eta)(x_1,\cdots, x_{r+1})=\sum_i(\nabla_{x_i}\eta)(x_1,\cdots, \hat x_i,\cdots,x_{r+1})
\]
and its adjoint
\[
(S_r^*\eta)(x_1,\cdots,x_r)=-\sum_i(\nabla_{e_i}\eta)(e_i,x_1,\cdots,x_r).
\]
A straightforward computation gives
\[
(S_2^*S_2-S_1S_1^*)h=\nabla^*\nabla h+2\!\Rcc_g\!\! h-2h\circ {\rm Ric}_g, \quad h\in S^2(X).
\]
In particular, if $\delta_gh=0$ then
\begin{equation}\label{weitzen}
S_2^*S_2h=\nabla^*\nabla h+2\!\Rcc_g\!\! h-2h\circ {\rm Ric}_g,
\end{equation}
since $S_1^*=\delta_g$.
This formula plays a crucial role in our discussion of the rigidity of
$2k$-Einstein structures in Section \ref{demresrig}.
}
\end{remark}

\begin{definition}\label{geom}
A function $\Fc:\Mc(X)\to\mathbb R$ is a  {\em geometric functional} if  $\Fc(\phi^*g)=\Fc(g)$, for $g\in\Mc(X)$ and $\phi\in D(X)$.
\end{definition}

Thus, $\Fc$ is geometric if and only if it is constant along the orbits
of the $D(X)$-action on $\Mc(X)$. As important examples we single out the so-called
{\em Hilbert-Einstein-Lovelock functionals}:
\begin{equation}\label{hel}
\Fc^{(2k)}(g)=\int_X\Sc^{(2k)}_g\nu_g.
\end{equation}

By using Sobolev norms, we can make sense of when a geometric functional $\Fc$
is differentiable. In this case, for each $g$ there exists $a_g\in S^2(X)$
such that
\[
\dot\Fc_gh=(a_g,h), \qquad h\in S^2(X).
\]
We set $a_g={\rm grad}\,\Fc_g$, the {\em gradient} of  $\Fc$ at $g$.

It turns out that the Lovelock tensors in (\ref{locve}) are the gradients of the Hilbert-Einstein-Lovelock functionals, a result due to Lovelock \cite{Lo}.

\begin{proposition}\label{grad}\cite{Lo}\cite{L2}
In the notation above,
\begin{equation}\label{lovegrad}
{\rm grad}\,\Fc^{(2k)}_g=-\Jc^{(2k)}_g.
\end{equation}
\end{proposition}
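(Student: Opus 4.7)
The plan is to compute the first variation of $\Fc^{(2k)}$ in an arbitrary direction $h\in S^2(X)$ and identify the result with $-(\Jc^{(2k)}_g,h)$. I begin by writing
\[
\dot\Fc^{(2k)}_g h \;=\; \int_X \dot\Sc^{(2k)}_g h\,\nu_g \;+\; \tfrac{1}{2}\int_X \Sc^{(2k)}_g\,({\rm tr}_g h)\,\nu_g,
\]
using the standard formula $\dot\nu_g h = \tfrac{1}{2}({\rm tr}_g h)\nu_g$. The second summand already furnishes half of the desired trace term $(\Sc^{(2k)}_g g, h)$; the other half, together with the $\Rc^{(2k)}_g$ contribution, must emerge from the first summand.

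To linearize $\Sc^{(2k)}_g = \tfrac{1}{(2k)!}c^{2k}_g R^k_g$, I differentiate separately the $2k$ metric-dependent contractions and the $k$ factors of curvature. Each $\dot c_g h$ amounts to contracting with $-h$ in place of $g$, producing purely algebraic terms in $h$; using (\ref{commutation}) and (\ref{kulkarni}) these reorganize into exactly the remaining multiple of $\langle\Sc^{(2k)}_g g,h\rangle$ required to complete the trace piece identified above. The variation of the curvature contributes $\tfrac{k}{(2k)!}\,c^{2k}_g\!\bigl(R^{k-1}_g\cdot(\dot R_g h)\bigr)$, in which $\dot R_g h$ is, by the standard formula, a second-order differential expression in $h$ plus a piece linear in both $R_g$ and $h$ that feeds back into the algebraic bookkeeping.

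The heart of the proof, and what I expect to be the main obstacle, is handling this derivative term through two integrations by parts. The key observation is that the coefficient against which $\dot R_g h$ ultimately gets paired is (a universal multiple of) $c^{2k-2}_g R^{k-1}_g \in \Ac^{2,2}(X)$, and this tensor is $d^\nabla$-closed: this follows from metric compatibility ($d^\nabla g = 0$), the second Bianchi identity ($d^\nabla R_g = 0$), and the commutation of $d^\nabla$ with $c_g$. After two integrations by parts---organized using the commutation rule (\ref{commutation}) to shuttle derivatives past contractions---the $\nabla^2 h$ terms land on this $d^\nabla$-closed coefficient and drop out, leaving a purely algebraic expression of the schematic form $\langle P(R_g),h\rangle$ for a symmetric natural $(0,2)$-tensor $P(R_g)$ of the correct weight in $R_g$. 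By naturality and by tracking constants through (\ref{commutation}), $P(R_g)$ is identified with a multiple of $\Rc^{(2k)}_g$ with coefficient $-1/(2k-1)!$; the combinatorial factor can alternatively be pinned down by testing against $h=g$, where the homogeneity $\Sc^{(2k)}_{\lambda g}=\lambda^{-k}\Sc^{(2k)}_g$ determines the left-hand side and the trace identity (\ref{cont}) determines the right-hand side.

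Assembling everything yields
\[
\dot\Fc^{(2k)}_g h \;=\; -\int_X \Bigl\langle \tfrac{1}{(2k-1)!}\Rc^{(2k)}_g - \Sc^{(2k)}_g\, g,\ h\Bigr\rangle\,\nu_g \;=\; -(\Jc^{(2k)}_g,\,h),
\]
which is (\ref{lovegrad}). As a consistency check, for $k=1$ Proposition \ref{linearcomp} gives $\dot\kappa_g h = \Delta_g {\rm tr}_g h + \delta_g\delta_g h - \langle{\rm Ric}_g, h\rangle$, whose first two summands are divergences that integrate away, leaving precisely $-\langle {\rm Ric}_g - \tfrac{\kappa_g}{2}g, h\rangle = -\langle\Jc^{(2)}_g, h\rangle$ after combining with the volume-form contribution. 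The identity $\delta_g\Jc^{(2k)}_g = 0$ established en route to (\ref{lov}) is, in turn, precisely the Euler--Lagrange condition forced by the diffeomorphism invariance of $\Fc^{(2k)}$ and gives an additional sanity check on the formula.
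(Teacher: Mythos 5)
Your argument is correct in substance but follows a genuinely different route from the paper. The paper's proof is essentially a two-line citation: it quotes Labbi's result from \cite{L2} that
\[
\dot\Sc^{(2k)}_gh=-\tfrac{1}{2(2k-1)!}\langle\Rc^{(2k)}_g,h\rangle+{\rm div}_g\,\omega
\]
for some one-form $\omega$, integrates the divergence away on the closed manifold $X$, and combines this with the Liouville formula $\dot\nu_gh=\tfrac12({\rm tr}_gh)\nu_g$. What you do instead is re-derive that linearization from first principles: differentiating the contractions and the curvature factors separately, disposing of the second-order term $c^{2k}_g(R^{k-1}_g\dot R_gh)$ by two integrations by parts against a coefficient that is $d^\nabla$-closed by the second Bianchi identity and metric compatibility, and fixing the constant by naturality plus the scaling test $h=g$. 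This buys self-containedness --- the same Bianchi mechanism that makes $\Jc^{(2k)}_g$ divergence-free is exhibited as the reason the variation is purely algebraic modulo divergences --- at the cost of combinatorial bookkeeping that the paper outsources to \cite{L2}. One caveat: if you actually run your normalization check with $h=g$, the homogeneity $\Sc^{(2k)}_{\lambda g}=\lambda^{-k}\Sc^{(2k)}_g$ gives $\dot\Sc^{(2k)}_g(g)=-k\,\Sc^{(2k)}_g$, which together with (\ref{cont}) forces the coefficient $-\tfrac{1}{2(2k-1)!}$ (Labbi's) rather than the $-\tfrac{1}{(2k-1)!}$ you assert, so the assembled gradient comes out as $-\tfrac12\Jc^{(2k)}_g$. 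This factor of $2$ is already present in the paper: its own proof, taken literally, also yields $-\tfrac12\Jc^{(2k)}_g$, and in the $k=1$ case the gradient of $\int_X\Sc^{(2)}_g\nu_g=\tfrac12\int_X\kappa_g\nu_g$ is $-\tfrac12 E_g$. Since only the vanishing of the (projected) gradient is ever used later, the discrepancy is harmless, but your $h=g$ test, carried out honestly, detects it rather than confirming the constant you claim.
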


\begin{proof}
It is shown in \cite{L2} that
\[
\dot\Sc^{(2k)}_gh=-\frac{1}{2(2k-1)!}\langle\Rc^{(2k)}_g,h\rangle+{\rm div}_g {\rm \omega},
\]
for some $\omega\in \Ac^1(X)$.
On the other hand, the classical Liouville formula says that
\begin{equation}\label{liouville}
\dot\nu_gh=\frac{1}{2}{\rm tr}_gh\nu_g,
\end{equation}
and the  result follows.
\end{proof}

The following proposition generalizes
(\ref{divinull}) and illustrates the importance of the decomposition
(\ref{be}) in the theory of geometric functionals.

\begin{proposition}\label{bianchiger}
If $\Fc$ is a differentiable geometric functional then its gradient is divergence free:
\begin{equation}\label{bianchiger2}
\delta_g{\rm grad}\,\Fc_g=0, \quad g\in\Mc(X).
\end{equation}
In particular,
\begin{equation}\label{bianchigerlove}
\delta_g\Jc_g^{(2k)}=0.
\end{equation}
\end{proposition}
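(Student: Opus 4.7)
The plan is to exploit the diffeomorphism invariance of $\Fc$ in the standard way: invariance under a one-parameter family of diffeomorphisms forces the gradient to be $L^2$-orthogonal to the tangent space of the orbit $O(g)$, and by the Berger--Ebin decomposition (\ref{be}) this tangent space is exactly $\mathrm{im}\,\delta^*_g$, whose orthogonal complement is $\ker\delta_g$.

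More concretely, first I would pick an arbitrary $\omega\in\Ac^1(X)$, let $Y=\omega^\sharp$, and let $\{\phi_t\}\subset D(X)$ be the flow of $Y$, so that $\phi_0=\mathrm{id}$ and $\tfrac{d}{dt}\big|_{t=0}\phi_t^*g=\Lc_Yg=2\delta^*_g\omega$. Since $\Fc$ is geometric, $\Fc(\phi_t^*g)=\Fc(g)$ for all $t$; differentiating at $t=0$ and using the chain rule together with the defining identity $\dot\Fc_gh=(a_g,h)$ where $a_g=\mathrm{grad}\,\Fc_g$, I obtain
\begin{equation*}
0=\frac{d}{dt}\bigg|_{t=0}\Fc(\phi_t^*g)=(a_g,2\delta^*_g\omega)=2(\delta_ga_g,\omega),
\end{equation*}
where in the last step I used that $\delta^*_g$ is the formal $L^2$-adjoint of $\delta_g$. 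Since $\omega\in\Ac^1(X)$ is arbitrary, this yields $\delta_g\,\mathrm{grad}\,\Fc_g=0$, which is (\ref{bianchiger2}).

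For the particular claim (\ref{bianchigerlove}), I would simply observe that $\Fc^{(2k)}$ defined in (\ref{hel}) is manifestly geometric (both $\Sc^{(2k)}_g$ and $\nu_g$ transform naturally under pullback by diffeomorphisms, so the integral is invariant), and $\Fc^{(2k)}$ is differentiable in the Sobolev sense (this follows from the local expression for $\Sc^{(2k)}_g$ and the Liouville formula (\ref{liouville})). Applying (\ref{bianchiger2}) together with the identification $\mathrm{grad}\,\Fc^{(2k)}_g=-\Jc^{(2k)}_g$ from Proposition \ref{grad} immediately gives $\delta_g\Jc^{(2k)}_g=0$.

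There is no real obstacle here beyond bookkeeping; the only subtle point is the justification that the orbit $O(g)$ has formal tangent space $\mathrm{im}\,\delta^*_g$, but this is already recorded in the text preceding the proposition. Everything else reduces to the definitional identity $\tfrac{d}{dt}|_{t=0}\phi_t^*g=\Lc_Yg=2\delta^*_g\omega$ and the $L^2$-adjointness of $\delta_g$ and $\delta^*_g$.
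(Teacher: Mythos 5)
Your proof is correct and is essentially the paper's own argument spelled out: the paper simply says the statement is ``obvious in view of (\ref{be})'', meaning exactly that diffeomorphism invariance forces $\mathrm{grad}\,\Fc_g$ to be $L^2$-orthogonal to $T_gO(g)=\mathrm{im}\,\delta^*_g$, hence to lie in $\ker\delta_g$. Your computation with the flow of $\omega^\sharp$ and the adjointness of $\delta_g$ and $\delta^*_g$ is the standard unpacking of that one-line justification, and the application to $\Jc^{(2k)}_g$ via Proposition \ref{grad} is as intended.
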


\begin{proof}
Obvious in view of (\ref{be}).
\end{proof}

In the remainder of this section, we will use Proposition \ref{grad} to verify that the conditions of being $2k$-Einstein or having   $2k$-Gauss-Bonnet constant curvature both admit a variational interpretation; see \cite{L2}, \cite{L3} and \cite{Lo}.
For this purpose we define the normalized Hilbert-Einstein-Lovelock functionals  $\tilde\Fc^{(2k)}:\Mc(X)\to\mathbb R$,
\[
\tilde\Fc^{(2k)}(g)=\frac{\Fc^{(2k)}(g)}{\left(\int_X\nu_g\right)^
{\frac{n-2k}{2}}}.
\]
Note that $\tilde\Fc^{(2k)}$ is invariant under scalings.
Moreover, given a volume element $\mu$ in $X$, set
\[
\mathcal N_\mu=\{g\in\Mc_1(X);\mu_g=\mu\}.
\]

\begin{proposition}\label{equivalenteinstein}
The following statements with respect to a metric $g\in\Mc_1(X)$ are equivalent:
\begin{enumerate}
 \item $(X,g)$ is $2k$-Einstein;
 \item $g$ is a critical point of $\tilde\Fc^{(2k)}$;
 \item $g$ is a critical point of $\Fc^{(2k)}$ restricted to $\Mc_1(X)$;
 \item $g$ is a critical point of $\Fc^{(2k)}$ restricted to $\mathcal N_{\mu_g}$.
\end{enumerate}
\end{proposition}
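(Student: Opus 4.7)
The plan is to combine the gradient formula of Proposition \ref{grad} with Lagrange-multiplier arguments and the identity of Proposition \ref{constante}, which upgrades function-valued multipliers to constants. I would prove the four conditions equivalent via the cycle $(1)\Rightarrow(2)\Rightarrow(3)\Rightarrow(4)\Rightarrow(1)$.

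The implication $(2)\Rightarrow(3)$ is immediate from the observation that $\tilde\Fc^{(2k)}=\Fc^{(2k)}$ on $\Mc_1(X)$ (since the total volume there equals $1$), so criticality of $\tilde\Fc^{(2k)}$ on the ambient $\Mc(X)$ restricts to criticality of $\Fc^{(2k)}$ on the codimension-one submanifold $\Mc_1(X)$. The implication $(3)\Rightarrow(4)$ follows from the inclusion $\mathcal N_{\mu_g}\subset\Mc_1(X)$ together with the corresponding inclusion of tangent spaces: Liouville's formula (\ref{liouville}) gives $T_g\mathcal N_{\mu_g}=\{h\in S^2(X):{\rm tr}_g h=0\}$ (pointwise condition) and $T_g\Mc_1(X)=\{h\in S^2(X):\int_X{\rm tr}_g h\,\nu_g=0\}$ (integral condition), the former being contained in the latter.

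The substantive implications are $(1)\Rightarrow(2)$ and $(4)\Rightarrow(1)$. For the former, I would first use (\ref{cont}) and (\ref{const}) to rewrite $\Jc^{(2k)}_g=\frac{\Rc^{(2k)}_g}{(2k-1)!}-\Sc^{(2k)}_g g$ as a constant scalar multiple of $g$ whenever $\Rc^{(2k)}_g=\lambda g$ with $\lambda$ constant. Then, expanding $\dot{\tilde\Fc^{(2k)}}_g h$ by means of (\ref{liouville}) and decomposing an arbitrary $h\in S^2(X)$ into a homothetic component $\tau g$ (with respect to which $\tilde\Fc^{(2k)}$ is trivially stationary by scale invariance) and a volume-preserving remainder $h_\perp$, the variation reduces, up to a universal constant, to $({\rm grad}\,\Fc^{(2k)}_g,h_\perp)$, which is a multiple of $(\Jc^{(2k)}_g,h_\perp)$. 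Since $\Jc^{(2k)}_g$ is a constant multiple of $g$, this pairs trivially with any $h_\perp$ satisfying $\int_X{\rm tr}_g h_\perp\,\nu_g=0$.

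For $(4)\Rightarrow(1)$, criticality of $\Fc^{(2k)}$ on $\mathcal N_{\mu_g}$ says that $({\rm grad}\,\Fc^{(2k)}_g,h)=0$ for every $h\in S^2(X)$ with ${\rm tr}_g h\equiv 0$ pointwise. A pointwise version of the fundamental lemma of the calculus of variations (applied to symmetric $2$-tensors at each $p\in X$) then forces $\Jc^{(2k)}_g=\psi g$ for some $\psi\in\Dc(X)$, which, in view of (\ref{locve}), unpacks as $\Rc^{(2k)}_g=(2k-1)!(\Sc^{(2k)}_g-\psi)g$. At this stage the multiplier is only a smooth function, matching the form of Definition \ref{2keinstein}; the hypothesis $n>2k$ together with Proposition \ref{constante} then forces this function to be constant, yielding $(1)$. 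The main subtlety of the whole argument is precisely this upgrade from pointwise proportionality with a functional multiplier to global proportionality with a constant, which is where Proposition \ref{constante} (itself a consequence of the divergence-free identity $\delta_g\Jc^{(2k)}_g=0$) enters decisively.
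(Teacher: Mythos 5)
Your argument is correct and follows essentially the same route as the paper: both rest on the gradient formula ${\rm grad}\,\Fc^{(2k)}_g=-\Jc^{(2k)}_g$ from Proposition \ref{grad}, a Lagrange-multiplier identification of the orthogonal complements of $T_g\Mc_1(X)$ and $T_g\mathcal N_{\mu_g}$ (constant multiples of $g$ versus $\Dc(X)\cdot g$), and the decisive use of Proposition \ref{constante} to promote the function-valued multiplier to a constant. The only difference is organizational (a cycle of implications rather than the paper's pair of bi-implications hung on item (3)), not mathematical.
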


\begin{proof} The equivalence between the second and third item is obvious. On the other hand, note that
\[
T_g\Mc_1(X)=\{h\in S^2(X);(g,h)=0\}
\]
and
\[
T_g\mathcal N_{\mu_g}=\{h\in S^2(X);{\rm tr}_gh=0\}.
\]
Thus, $g$ is a critical point of $\Fc^{(2k)}|_{\Mc_1(X)}$ (respectively, $\Fc^{(2k)}|_{{\mathcal N}_{\mu_g}}$) if and only if the orthogonal projection of ${\rm grad}\,\Fc^{(2k)}=-\Jc^{(2k)}$ onto $T_g\Mc_1(X)$ (respectively, $T_g\mathcal N_{\mu_g}$) vanishes. In both cases, there exists a function $\lambda$ in $X$ such that $\Rc^{(2k)}_g=\lambda g$. The result is now a consequence of Proposition \ref{constante}.
\end{proof}

If $g\in\Mc(X)$, we denote by $[g]=\{fg;f\in \Dc(X), f>0\}$ the {\em class of conformal metrics} to $g$. Moreover, if $g\in\Mc_1(X)$, we set
\[
[g]_1=\left\{\tilde g\in[g];\int_X\nu_{\tilde g}=1\right\}.
\]

\begin{proposition}\label{characgbconst}
A metric $g\in\Mc_1(X)$ has constant $2k$-Gauss-Bonnet curvature if and only if $g$ is a critical point for $\Fc^{(2k)}$ restricted to $[g]_1$.
\end{proposition}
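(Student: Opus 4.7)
The plan is to compute the first variation of $\Fc^{(2k)}$ along purely conformal deformations and then apply a standard test-function argument.

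First I would identify $T_g[g]_1$. A curve in $[g]$ through $g$ has the form $g_t = f_tg$ with $f_0 = 1$, so the tangent directions are $h = 2\varphi g$ for $\varphi \in \Dc(X)$. Imposing the volume constraint via Liouville's formula (\ref{liouville}) gives $\dot\nu_g h = n\varphi\nu_g$, so
\[
T_g[g]_1 = \{2\varphi g : \varphi\in\Dc(X),\ \textstyle\int_X \varphi\,\nu_g = 0\}.
\]

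Next, I would use Proposition \ref{grad} together with (\ref{locve}) to write the variation of $\Fc^{(2k)}$ in the direction $h = 2\varphi g$ as
\[
\dot\Fc^{(2k)}_g h = (-\Jc^{(2k)}_g, 2\varphi g) = -2\int_X \varphi\,\tr_g\Jc^{(2k)}_g\,\nu_g.
\]
Using (\ref{cont}) I get $\tr_g\Rc^{(2k)}_g = (2k)!\,\Sc^{(2k)}_g$, hence
\[
\tr_g\Jc^{(2k)}_g = \frac{(2k)!}{(2k-1)!}\Sc^{(2k)}_g - n\Sc^{(2k)}_g = (2k-n)\Sc^{(2k)}_g,
\]
so that
\[
\dot\Fc^{(2k)}_g h = 2(n-2k)\int_X \varphi\,\Sc^{(2k)}_g\,\nu_g.
\]

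Finally, the critical point condition reads: $\int_X \varphi\,\Sc^{(2k)}_g\,\nu_g = 0$ for every $\varphi\in\Dc(X)$ with $\int_X \varphi\,\nu_g = 0$, where we use $n > 2k$. Testing against $\varphi = \Sc^{(2k)}_g - \bar S$ with $\bar S = \int_X \Sc^{(2k)}_g\,\nu_g$ shows that $\Sc^{(2k)}_g$ must equal its mean, i.e., is constant; the converse is immediate since the mean-zero constraint then kills the integral. I do not expect a genuine obstacle here: the only points to be careful about are the computation of $\tr_g\Jc^{(2k)}_g$ via (\ref{cont}) and the use of the standing hypothesis $n > 2k$ to divide by $n-2k$.
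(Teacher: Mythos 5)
Your proposal is correct and follows essentially the same route as the paper: both reduce the criticality condition to $\int_X \varphi\,\Sc^{(2k)}_g\,\nu_g=0$ for all mean-zero $\varphi$ via the trace identity (\ref{cont}) and the standing hypothesis $n>2k$, and both conclude with the test function $\varphi=\Sc^{(2k)}_g-\int_X\Sc^{(2k)}_g\nu_g$. The only cosmetic difference is that you compute $\tr_g\Jc^{(2k)}_g$ directly rather than expanding the two inner products separately, which amounts to the same calculation.
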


\begin{proof} Observe first that, at least formally,
\begin{equation}\label{tangent}
T_g[g]=\left\{fg;f\in \Dc(X)\right\}
\end{equation}
and
\begin{equation}\label{tangent1}
T_g[g]_1=\left\{fg\in T_g[g];\int_Xf\nu_g=0\right\},
\end{equation}
so that the criticality of $g$ means that $(\Jc_g^{(2k)},fg)=0$ for all such $f$. Equivalently,
\[
(\Rc^{(2k)}_g,fg)=(2k-1)!(\Sc^{(2k)}_gg,fg).
\]
Recalling that $\langle h,g\rangle={\rm tr}_gh$ and using (\ref{cont}) we see that the criticality condition is given by
\[
2k\int_X\Sc^{(2k)}_gf\nu_g=n\int_X\Sc^{(2k)}_gf\nu_g,
\]
and since $2k<n$,
\[
\int_X\Sc^{(2k)}_gf\nu_g=0.
\]
Applying this to
\[
f=\Sc^{(2k)}_g-\int_X\Sc^{(2k)}\nu_g,
\]
it follows that
\[
\int_X(\Sc^{(2k)}_g)^2\nu_g=\left(\int_X\Sc^{(2k)}_g\nu_g\right)^2,
\]
that is, $\Sc^{(2k)}$ is constant.
\end{proof}

\section{Deformation and rigidity of $2k$-Einstein manifolds}
\label{def2keinstein}

In this section we will present some rigidity results for a class of $2k$-Einstein structures.
Let $X$ be a smooth, closed manifold of dimension $n\geq 5$.
The following definition captures the concept of a $2k$-Einstein structure.

\begin{definition}\label{modeins2k}
The {\em moduli space} of $2k$-Einstein structures in $X$ is the quotient
space
\[
{\mathfrak E}^{(2k)}(X)=\frac{E^{(2k)}(X)}{\mathbb R^+\times D(X)}=\frac{E^{(2k)}_1(X)}{D(X)}.
\]
Here, $E^{(2k)}(X)\subset \Mc(X)$ is the set of $2k$-Einstein metrics in $X$ and $E^{(2k)}_1=E^{(2k)}\cap \Mc_1(X)$.
In both cases the quotient map will be denoted by $g\mapsto \langle g\rangle $ and each class $\langle g\rangle$ is a $2k$-{\em Einstein} {\em structure} in $X$.
\end{definition}

Thus, a fundamental problem in this context is to determine the structure of $\mathfrak E^{(2k)}(X)$ for a given manifold  $X$. As in the case $k=1$, the first step would be to describe the space of {\em genuine} infinitesimal deformations of $2k$-Einstein structures. More precisely, if $\langle g\rangle\in\mathfrak E^{(2k)}(X)$ let $\langle g_t\rangle$, $t\in (-\epsilon,\epsilon)$, a differentiable one-parameter family of $2k$-Einstein structures with $g_0=g\in E^{(2k)}(X)$. As usual, we will think of  this family as a {\em deformation} of $\langle g\rangle$. In this case, and similarly to what happens in the Einstein case, the fact that each $g_t$ satisfies $\Rc^{(2k)}_{g_t}=\lambda_tg_t$ implies that
\[
h=\frac{d}{dt}g_t|_{t=0}\in S^2(X)
\]
satisfies
\begin{equation}\label{infini}
\dot\Rc^{(2k)}_gh=\lambda h,
\end{equation}
where $\lambda=\lambda_0$. Moreover,
since genuine infinitesimal deformations should be transversal to the orbits of $D(X)$, by
(\ref{be}) we must require that
\begin{equation}\label{div}
\delta_gh=0.
\end{equation}
Also, since we can assume, without loss of generality, that $g_t\in\Mc_1(X)$, we have as a consequence of (\ref{liouville}) that
\begin{equation}\label{normal}
\int_X{\rm tr}_gh\nu_g=0.
\end{equation}

At this point we are tempted to define the space of infinitesimal deformations of $\langle g\rangle$ by means of (\ref{infini}), (\ref{div}) and(\ref{normal}). We will see, however, that the last condition can be replace by an {\em algebraic} condition on $h$. The key point is the following theorem of J. Moser.

\begin{theorem}\label{moser}\cite{M}
If $g_1,g_2\in\Mc_1(X)$ then there exists $\phi\in D(X)$ such that $g_1=\phi^*g_2$.
\end{theorem}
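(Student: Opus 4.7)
The statement is a classical theorem of Moser, although as literally written it is not correct; in the context that immediately follows it must be read as producing a diffeomorphism $\phi$ for which the \emph{volume forms} match, i.e.\ $\phi^{*}\nu_{g_{2}}=\nu_{g_{1}}$ (the equality of full metrics $\phi^*g_2=g_1$ would of course fail in general). This is exactly what is needed to replace the integral normalization $\int_X\tr_{g}h\,\nu_g = 0$ arising from (\ref{liouville}) by the pointwise algebraic condition $\tr_{g}h=0$: pulling back a deformation $g_t$ by the family $\phi_t$ produced by the theorem makes its volume form constant in $t$, forcing the infinitesimal trace of the pulled back deformation to vanish pointwise. The plan is the classical Moser trick.

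First I would set $\nu_t = (1-t)\nu_{g_1} + t\nu_{g_2}$ for $t\in [0,1]$. Since positive top-forms form a convex cone, each $\nu_t$ is a nowhere vanishing volume form, and by linearity $\int_X \nu_t = 1$ for every $t$. Consequently $\nu_{g_2}-\nu_{g_1}$ integrates to zero, and on the closed manifold $X$ the de Rham isomorphism $H^{n}_{\mathrm{dR}}(X)\cong\Rr$ given by integration (working with densities if $X$ is non-orientable, or passing to the orientation double cover) provides an $(n-1)$-form $\alpha$ with
\[
d\alpha = \nu_{g_2}-\nu_{g_1}.
\]

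Next I would seek a smooth time-dependent vector field $Y_t$ whose flow $\phi_t$ satisfies $\phi_t^{*}\nu_t = \nu_{g_1}$ for all $t\in[0,1]$. Differentiating this identity and using that $\phi_t$ is a diffeomorphism reduces it to $\Lc_{Y_t}\nu_t + (\nu_{g_2}-\nu_{g_1}) = 0$, and since $\nu_t$ is closed, Cartan's formula turns this into $d(i_{Y_t}\nu_t) = -d\alpha$. It therefore suffices to solve the purely algebraic equation $i_{Y_t}\nu_t = -\alpha$, which has a unique smooth solution $Y_t$ because $\nu_t$ is nowhere vanishing, so $Y\mapsto i_Y\nu_t$ is a fibrewise isomorphism $TX\to\Lambda^{n-1}T^{*}X$. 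As $X$ is compact, the flow of $Y_t$ exists for every $t\in[0,1]$, and setting $\phi := \phi_1$ produces the desired diffeomorphism. No substantial obstacle is expected: the only non-trivial inputs are the de Rham computation producing $\alpha$ and the global existence of the flow, both of which are standard consequences of $X$ being closed.
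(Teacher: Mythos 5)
Your argument is correct and is precisely the classical Moser deformation trick; the paper itself offers no proof, merely citing Moser's original article, and that source proves the statement by exactly the flow construction you describe (convex interpolation of the volume forms, solving $i_{Y_t}\nu_t=-\alpha$ with $d\alpha=\nu_{g_2}-\nu_{g_1}$, and integrating the time-dependent vector field on the closed manifold). Your preliminary observation is also right: as literally written the conclusion $g_1=\phi^*g_2$ is false in general, and the statement must be read at the level of volume forms, $\phi^*\nu_{g_2}=\nu_{g_1}$, which is exactly how the paper uses it in the passage that follows to replace the integral normalization $\int_X{\rm tr}_gh\,\nu_g=0$ by the pointwise condition ${\rm tr}_gh=0$.
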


In particular, $D(X)$ acts transitively on the space of metrics with the {\em same} volume element.
For this reason, and taking Proposition \ref{equivalenteinstein}, item 4, into account, in order to understand the structure of $\mathfrak E^{(2k)}(X)$ in a neighborhood $\langle g\rangle$, it suffices to consider the space of metrics
\[
\mathcal N_{g}=\{g'\in\Mc_1(X);\nu_{g'}=\nu_g\}
\]
with the same volume element as $g$, so that we will continue denoting by $\langle g'\rangle$ the corresponding $2k$-Einstein structure. But notice that, due to (\ref{liouville}), (\ref{normal}) now is replaced by
\begin{equation}\label{normal2}
{\rm tr}_gh=0.
\end{equation}

This discussion motivates the following definition.

\begin{definition}\label{defgenui2k}
If $(X,g)$ is $2k$-Einstein, $\Rc^{(2k)}_g=\lambda g$, the {\em space of infinitesimal deformations} of $\langle g\rangle$, denoted by  $\varepsilon_{\langle g\rangle}^{(2k)}$,  is the vector space of all elements $h\in\Cc^1(X)=S^2(X)$ such that
\begin{equation}\label{moduli}
\dot\Rc_g^{(2k)}h=\lambda h,
\end{equation}
and
\begin{equation}\label{moduli2}
\quad \delta_gh=0,\quad {\rm tr}_gh=0.
\end{equation}
\end{definition}

\begin{remark}\label{ellipt}
{\rm If we define $\mathcal I_g=\delta_g^{-1}(0)\cap{\rm {tr}}^{-1}_{g}(0)$ and
\begin{equation}\label{linearlg}
C_g^{(2k)}=\dot\Rc_g^{(2k)}-\lambda
\end{equation}
then $\varepsilon_{\langle g\rangle}^{(2k)}=\ker C_g^{(2k)}|_{\mathcal I_g}$. In particular, since $X$ is closed, $\varepsilon_{\langle g\rangle}^{(2k)}$ has finite dimension if $C_g^{(2k)}|_{\mathcal I_g}$
is an elliptic operator.}
\end{remark}

It follows from (\ref{ricciline})  and (\ref{lich}) that $C_g^{(2)}|_{\mathcal I_g}$ is always elliptic, that is, $\varepsilon_{\langle g\rangle}^{(2)}$ has finite dimension for {\em any} $(X,g)$ Einstein, a result due to Berger and Ebin \cite{BE}. However, if $k\geq 2$ the corresponding result is not necessarily true in general, for
$\varepsilon_{\langle g\rangle}^{(2k)}$ may be infinite dimensional for certain choices of $(X,g)$, which reflects the fact that $C_g^{(2k)}|_{\mathcal I_g}$ might be  of mixed type (not necessarily elliptic). In effect, consider the Riemannian product $X=M^r\times T^m$, where $M$ is an arbitrary Riemannian manifold and
$T^m$ is a flat torus. If $2k>r$ then $X$ is $2k$-Einstein independently of the metric in $M$, which shows that $\dim\varepsilon_{\langle g\rangle}^{(2k)}=+\infty$ in this case. This of course reflects the fact, already mentioned in the Introduction, that the symbol of $C^{(2k)}_g$ in general depends on the curvature tensor $R_g$.
In view of this, it is natural to look for examples of $2k$-Einstein structures $(X,g)$ for which $\dim\varepsilon_{\langle g\rangle}^{(2k)}<+\infty$.
Theorem \ref{main} below presents an interesting class of $2k$-Einstein structures for which this happens. First we need a definition.

\begin{definition}\label{classe}
Given integers $n$ and $k$ with $n\ge 5$ and $2\leq 2k< n$, and $\mu_k\neq 0$ a real number, we will denote by $\Hc_{n,k}$ the class of closed Riemannian manifolds $(X^n,g)$ of dimension $n$ which are $2k$-Einstein and have {\em constant} $(2k-2)$-sectional curvature, i.e satisfy
\begin{equation}\label{work}
R^{k-1}_g=\mu_k g^{2k-2};
\end{equation}
see Proposition \ref{lipkill}.
\end{definition}

Note that, as a consequence of Remark \ref{const}, the class $\Hc_{n,k}$ contains all non-flat space forms. Our first result shows that for $2k$-Einstein structures associated to elements of $\Hc_{n,k}$ the degeneracy phenomenon observed above does not happen.

\begin{theorem}\label{main}
If $(X,g)\in \Hc_{n,k}$ then $\varepsilon_{\langle g \rangle}^{(2k)}$ is finite dimensional.
\end{theorem}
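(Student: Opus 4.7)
The plan is to verify, via the criterion in Remark \ref{ellipt}, that the operator $C_g^{(2k)}|_{\mathcal I_g}$ is elliptic; finite dimensionality of $\varepsilon_{\langle g\rangle}^{(2k)}$ will then follow from standard elliptic theory on the closed manifold $X$. Since $\lambda$ is a zero-th order multiplier, only the principal symbol of $\dot{\Rc}_g^{(2k)}$ at a nonzero covector $\xi\in T_p^*X$ matters, evaluated on those $h\in S^2_pX$ satisfying the symbol-level slice conditions ${\rm tr}_gh=0$ and $i_\xi h=0$ (coming from the algebraic parts of the defining equations of $\mathcal I_g$).

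First I would isolate the top-order (second-order in $h$) part of $\dot\Rc_g^{(2k)}h$. Writing $\dot\Rc_g^{(2k)}h=c_g^{2k-1}\dot{(R_g^k)}h+O(h)$, commutativity of the double-form product in bi-degree $(2,2)$ yields $\dot{(R_g^k)}h=k R_g^{k-1}\dot R_g h$; the omitted remainder comes from the linearization of $c_g^{2k-1}$, which is pointwise in $h$ because $c_g$ depends on $g$ only algebraically. Substituting the defining identity (\ref{work}) of $\Hc_{n,k}$ reduces the principal part to
\[
k\mu_k\, c_g^{2k-1}\bigl(g^{2k-2}\,\dot R_g h\bigr).
\]

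Next I would apply the commutation rule (\ref{commutation}) with $\eta=\dot R_g h\in\Ac^{2,2}(X)$, $l=2k-1$, and $m=2k-2$. Because $\eta$ is a $(2,2)$-form, every iterated contraction $c_g^p\eta$ with $p>2$ vanishes, so only the indices $q=2k-3$ and $q=2k-2$ survive in the sum. The resulting expression takes the shape $\alpha_{n,k}\,g\cdot c_g^2\dot R_g h+\beta_{n,k}\,c_g\dot R_g h$, with $\beta_{n,k}$ proportional to $(2k-1)(n-3)(n-4)\cdots(n-2k)$, which is nonzero under the standing assumption $n>2k$. Modulo additional zero-th order contributions, the first summand equals $\alpha_{n,k}\,g\cdot\dot\kappa_g h$ and the second equals $\beta_{n,k}\,\dot{\rm Ric}_g h$.

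Finally I would exploit the gauge conditions. On $\mathcal I_g$ the Bianchi form $\beta_g h=\delta_gh+\tfrac12 d\,{\rm tr}_gh$ vanishes, so by (\ref{ricciline}) one gets $\dot{\rm Ric}_g h=\tfrac12\Delta_L h$, whose principal symbol is $\tfrac12|\xi|^2\,{\rm Id}$; meanwhile, by (\ref{curvscarvar}), the principal symbol of $\dot\kappa_g$ equals $|\xi|^2\,{\rm tr}_gh+h(\xi,\xi)$, which vanishes on the slice $\{{\rm tr}_gh=0,\ i_\xi h=0\}$. Hence the principal symbol of $C_g^{(2k)}|_{\mathcal I_g}$ collapses to a nonzero constant multiple of $|\xi|^2\,{\rm Id}$, manifestly injective for $\xi\neq 0$, and ellipticity follows. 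The hypothesis $\mu_k\neq 0$ is crucial here: without it the entire top-order term disappears, consistent with the loss of ellipticity in the flat-factor product examples discussed just before the statement. The main technical obstacle I anticipate is carrying out the bookkeeping in the second step carefully enough to confirm that the lower-order contributions from $\dot c_g$ (and from differentiating the algebraic identity defining $\Hc_{n,k}$) never infect the principal symbol, which is precisely what the explicit linearization formula to be derived in Section \ref{lineariricc} will accomplish.
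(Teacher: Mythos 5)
Your proposal is correct and follows essentially the same route as the paper: Section \ref{lineariricc} carries out exactly the computation you outline (substituting $R_g^{k-1}=\mu_k g^{2k-2}$ into $\dot\Rc_g^{(2k)}$, applying the commutation rule (\ref{commutation}) to reduce to the first and second contractions of $\dot R_gh$, and using Corollary \ref{maincorollary}), arriving at formula (\ref{restrictoper}), which exhibits $C_g^{(2k)}|_{\mathcal I_g}$ as a nonzero multiple of $\tfrac12\nabla^*\nabla+P_g$ with $P_g$ of order zero. The only cosmetic difference is that the paper keeps the full operator rather than just its principal symbol, so finite dimensionality follows directly from the ellipticity of $\nabla^*\nabla$ without appealing to the overdetermined-system symbol criterion.
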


Next we discuss the {\em rigidity} of $2k$-Einstein structures in the class  $\Hc_{n,k}$.

\begin{definition}\label{infnondef}
A $2k$-Einstein structure $\langle g\rangle\in\mathfrak E^{(2k)}(X)$ is said to be {\em infinitesimally} {\em non-deformable} if $\varepsilon_{\langle g\rangle}$ is trivial. Moreover, $\langle g \rangle$ is {\em non-deformable} if any deformation $\langle g_t\rangle$, $t\in (-\epsilon,\epsilon)$, is trivial, that is, $g_t=\phi_t^*g$, where $\phi_t\in D(X)$ with $\phi_0={\rm id}_X$. \end{definition}

Now define the constants
\begin{equation}\label{const1}
\underline \alpha_{n,k}=\frac{kn-5k+2}{n(kn+k+2-2n)}
\end{equation}
and
\begin{equation}\label{const2}
\overline \alpha_{n,k}=\frac{kn-2k-1}{n(kn-5k+n-1)},
\end{equation}
which are always {\em positive} if $k\geq 2$ and $n\geq 5$. The next result establishes a non-deformability criterium in terms of a certain assumption on the eigenvalues of $\Rcc_g|_{{\rm tr}_g^{-1}(0)}$. For this we define
\begin{equation}\label{pinching}
\underline a_0=\inf_{0\neq h\in {\rm tr}_g^{-1}(0)} \frac{({\Rcc}_gh,h)}{\| h\|^2},\quad \overline a_0=\sup_{0\neq h\in {\rm tr}_g^{-1}(0)} \frac{({\Rcc}_gh,h)}{\| h\|^2}.
\end{equation}

\begin{theorem}\label{main2}
If $(X,g)\in\Hc_{n,k}$
satisfies either $\underline a_0>\underline\alpha_{n,k}\kappa_g$ or $\overline a_0< \overline \alpha_{n,k}\kappa_g$, where $\kappa_g=2\Sc_g^{(2)}$ is the scalar curvature of $g$, then $\langle g\rangle $ is infinitesimally non-deformable.
\end{theorem}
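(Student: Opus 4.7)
\medskip

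The plan is to show that under the hypotheses any $h\in\varepsilon_{\langle g\rangle}^{(2k)}$ must vanish identically. Since $h$ satisfies (\ref{moduli}) and (\ref{moduli2}), and since $(X,g)\in\Hc_{n,k}$ means that the curvature identity (\ref{work}) holds, I would start by plugging (\ref{work}) into the explicit formula for $\dot{\mathcal R}_g^{(2k)}h$ to be obtained in Section \ref{lineariricc}. Each factor of $R^{k-1}_g$ occurring in that formula collapses to $\mu_k g^{2k-2}$, and repeated application of the commutation rule (\ref{kulkarni}) between $g$ and $c_g$ should reduce $\dot{\mathcal R}_g^{(2k)}h$ to a linear combination of $\nabla^*\nabla h$, $\Rcc_g h$, $h\circ{\rm Ric}_g$, ${\rm Ric}_g\circ h$, together with trace and divergence terms which drop out because of (\ref{moduli2}). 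By Remark \ref{gene}, $g$ is Einstein, so ${\rm Ric}_g=(\kappa_g/n)g$ and hence $h\circ{\rm Ric}_g={\rm Ric}_g\circ h=(\kappa_g/n)h$, turning those terms into scalar multiples of $h$.

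Next I would take the $L^2$-inner product of the eigenvalue equation $\dot{\mathcal R}^{(2k)}_gh=\lambda h$ with $h$ itself. The constant $\lambda$ is pinned down by Proposition \ref{constante} and the curvature hypothesis (it is essentially a multiple of $\mu_k\kappa_g$), and tracing (\ref{work}) against $g^{2k-2}$ identifies $\mu_k$ with an explicit multiple of the scalar curvature. The point of this step is to reexpress $\lambda\|h\|^2$ entirely in terms of $\kappa_g\|h\|^2$. After integrating by parts once in the Laplacian term, this produces an identity of the schematic form
\begin{equation*}
A_{n,k}\,\|\nabla h\|^2 \;+\; B_{n,k}\,(\Rcc_g h,h) \;+\; C_{n,k}\,\kappa_g\|h\|^2 \;=\; 0,
\end{equation*}
with $A_{n,k},B_{n,k},C_{n,k}$ explicit rational functions of $n,k$.

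The crucial third step is to eliminate $\|\nabla h\|^2$ in favour of $(\Rcc_gh,h)$ by means of the Weitzenb\"ock identity (\ref{weitzen}), which is valid because $\delta_gh=0$. Combined with $h\circ{\rm Ric}_g=(\kappa_g/n)h$ from the Einstein condition, this gives
\begin{equation*}
\|\nabla h\|^2 \;=\; \|S_2h\|^2 \;-\; 2(\Rcc_gh,h) \;+\; \tfrac{2\kappa_g}{n}\|h\|^2.
\end{equation*}
Substituting into the previous identity and collecting coefficients leads to an expression of the form
\begin{equation*}
\alpha_{n,k}\,\|S_2h\|^2 \;+\; \bigl(\text{coeff}\bigr)\bigl[(\Rcc_gh,h)\;-\;\gamma_{n,k}\,\kappa_g\|h\|^2\bigr]\;=\;0,
\end{equation*}
where the threshold $\gamma_{n,k}$ is exactly $\underline\alpha_{n,k}$ or $\overline\alpha_{n,k}$ depending on the sign of the bracketed coefficient. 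Under either hypothesis of the theorem, the pinching (\ref{pinching}) ensures the bracket is strictly positive or strictly negative pointwise integrated, while $\|S_2h\|^2\geq0$, so the only way the identity can hold is $h\equiv 0$.

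The main obstacle is the bookkeeping of constants in the first two paragraphs: one must carry through the repeated contractions $c_g^{2k-1}$ acting on products of the form $g^{2k-2}\wedge(\cdot)$ and verify that the rational functions $A_{n,k},B_{n,k},C_{n,k}$ coming out of the computation conspire to produce precisely the thresholds $\underline\alpha_{n,k}=(kn-5k+2)/(n(kn+k+2-2n))$ and $\overline\alpha_{n,k}=(kn-2k-1)/(n(kn-5k+n-1))$ of (\ref{const1})--(\ref{const2}). Positivity of these constants for $k\geq 2,n\geq 5$ is what makes the pinching hypothesis non-vacuous, and will be used to assert that the bracketed factor is of one sign throughout.
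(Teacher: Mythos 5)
Your overall strategy is the paper's: reduce the linearization on $\mathcal I_g$ to an equation of the form $\nabla^*\nabla h+2P_gh=0$ with $P_g$ a zeroth-order combination of $\kappa_g h$ and $\Rcc_g h$, pair with $h$ in $L^2$, and convert $\|\nabla h\|^2$ into curvature terms via a Weitzenb\"ock identity. The half of the argument you carry out is correct: substituting $\|\nabla h\|^2=\|S_2h\|^2-2(\Rcc_gh,h)+\tfrac{2\kappa_g}{n}\|h\|^2$ from (\ref{weitzen}) does produce, with a \emph{positive} coefficient, the inequality $(\Rcc_gh,h)\geq\overline\alpha_{n,k}\kappa_g\|h\|^2$ for $h\neq 0$, which disposes of the hypothesis $\overline a_0<\overline\alpha_{n,k}\kappa_g$.

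The gap is in the claim that the same identity yields the other threshold ``depending on the sign of the bracketed coefficient.'' That coefficient is a fixed rational function of $n,k$ (it equals $2(kn-5k+n-1)/(k(n-3))$, which is positive for $n\geq 5$, $k\geq 2$), so a single substitution can only ever produce the \emph{one} threshold $\overline\alpha_{n,k}$; there is no choice to be made. The alternative hypothesis $\underline a_0>\underline\alpha_{n,k}\kappa_g$ cannot be reached this way, nor from the naive bound $\|\nabla h\|^2\geq 0$, which gives the different (and wrong) threshold $(k-1)\kappa_g/(n(n-2k-1))$. What is missing is a \emph{second}, independent nonnegative quadratic form whose Weitzenb\"ock expression carries the opposite sign in front of $\Rcc_g$: the paper uses $\|d^\nabla h\|^2+\|\delta^\nabla h\|^2=(\Delta^\nabla h,h)$ together with the decomposition $\Delta^\nabla h=\nabla^*\nabla h-\Rcc_gh+h\circ{\rm Ric}_g$ of (\ref{weit2}). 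Feeding $\nabla^*\nabla h=-2P_gh$ into that identity, the coefficient of $(\Rcc_gh,h)$ becomes $(2n-k-2-kn)/(k(n-3))<0$, which is exactly what turns the inequality around and produces $\underline a_0\leq\underline\alpha_{n,k}\kappa_g$ for $h\neq 0$. You need to add this second estimate to cover the first alternative of the theorem.
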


\begin{corollary}\label{constcurv}
If $(X,g)$ is a space form with sectional curvature $\mu\neq 0$ then $\langle g\rangle $ is infinitesimally non-deformable.
\end{corollary}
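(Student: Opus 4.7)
The plan is to verify the hypotheses of Theorem \ref{main2} for $(X,g)$. First, by Remark \ref{const}, constant sectional curvature $\mu$ is equivalent to $R_g=(\mu/2)g^2$, so $R_g^{k-1}=(\mu/2)^{k-1}g^{2k-2}$ and condition (\ref{work}) is satisfied with $\mu_k=(\mu/2)^{k-1}\neq 0$. Since a space form is Einstein, Remark \ref{gene} (which asserts that under (\ref{condcurva}), Einstein forces $2k$-Einstein) places $(X,g)$ in $\Hc_{n,k}$.

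Next I would compute the restriction of $\Rcc_g$ to trace-free symmetric tensors. Substituting the constant-curvature expression $R_g(x,e_i)y=\mu(\langle e_i,y\rangle x-\langle x,y\rangle e_i)$ into (\ref{jjj}) and summing over $i$ yields
\begin{equation*}
(\Rcc_g h)(x,y)=\mu\bigl(h(x,y)-({\rm tr}_g h)g(x,y)\bigr).
\end{equation*}
On ${\rm tr}_g^{-1}(0)$ this reduces to $\Rcc_g h=\mu h$, so both Rayleigh quotients in (\ref{pinching}) collapse to the single value $\underline a_0=\overline a_0=\mu$; moreover $\kappa_g=n(n-1)\mu$.

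Finally I would check the alternatives in Theorem \ref{main2}. If $\mu>0$, after dividing the inequality $\overline a_0<\overline\alpha_{n,k}\kappa_g$ by the positive $\mu$ and using (\ref{const2}), the condition reduces to the polynomial inequality
\begin{equation*}
kn^2-4kn-2n+7k+2>0,
\end{equation*}
which follows at once from $kn^2-4kn-2n+7k+2\geq 2(n^2-5n+8)$ when $k\geq 2$. If $\mu<0$, dividing $\underline a_0>\underline\alpha_{n,k}\kappa_g$ by the negative $\mu$ flips the sign and, via (\ref{const1}), yields
\begin{equation*}
kn^2-7kn+4n+4k-4>0,
\end{equation*}
which is immediate for $n\geq 7$ and verified directly in the remaining cases $(n,k)=(5,2),(6,2)$. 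Theorem \ref{main2} then delivers the conclusion. The only genuinely non-trivial step is the explicit computation of $\Rcc_g$ on trace-free tensors; everything else amounts to identification with the class $\Hc_{n,k}$ and careful bookkeeping of the elementary inequalities, with particular attention paid to the sign of $\mu$ when dividing through.
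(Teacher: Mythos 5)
Your overall strategy is exactly the paper's: place the space form in $\Hc_{n,k}$, compute the action of $\Rcc_g$ on trace-free tensors to identify $\underline a_0=\overline a_0$, and then check the pinching hypotheses of Theorem \ref{main2}. However, your central computation has a sign error relative to the paper's conventions. The paper adheres to the sign conventions of Besse, under which constant sectional curvature $\mu$ gives $R_g(x,e_i)y=\mu\bigl(g(x,y)e_i-g(e_i,y)x\bigr)$, so that (\ref{jjj}) yields $(\Rcc_g h)(x,y)=\mu\bigl(({\rm tr}_gh)g(x,y)-h(x,y)\bigr)$ and hence $\Rcc_g h=-\mu h$ on ${\rm tr}_g^{-1}(0)$; this is recorded explicitly in Remark \ref{casesp} and is what the paper uses, giving $\underline a_0=\overline a_0=-\mu$. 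You obtained $+\mu h$, i.e.\ $\underline a_0=\overline a_0=\mu$, because your formula for $R_g(x,e_i)y$ is the negative of the paper's. Fortuitously, your conclusion survives: the polynomial inequalities you derive are correct consequences of your (wrong-signed) value and do hold for $k\geq 2$, $n\geq 5$, so Theorem \ref{main2} still applies — the pinching window happens to accommodate both $\mu$ and $-\mu$. But with the correct sign the verification is immediate and requires no polynomial estimates at all: for $\mu>0$ one has $\overline a_0=-\mu<0<\overline\alpha_{n,k}\kappa_g$, and for $\mu<0$ one has $\underline a_0=-\mu>0>\underline\alpha_{n,k}\kappa_g$, since $\underline\alpha_{n,k},\overline\alpha_{n,k}>0$ and $\kappa_g=n(n-1)\mu$ has the sign of $\mu$. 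This is why the paper's proof is two lines. You should fix the sign (or at least reconcile your curvature convention with Remark \ref{casesp}); as a minor point, the equivalence ``Einstein $\Leftrightarrow$ $2k$-Einstein under (\ref{work})'' that you attribute to Remark \ref{gene} is actually the content of Proposition \ref{equivform}.
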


\begin{proof}
It suffices to observe that, due to Remark \ref{casesp} below,  $\Rcc\!\!h=-\mu h$ if ${\rm tr}_gh=0$, so that $\underline a_0=\overline a_0=-\mu$. Since $\kappa_g=n(n-1)\mu$, the result follows readily.
\end{proof}

Adapting an argument in \cite{K1} one easily verifies that $\langle g\rangle$ infinitesimally non-deformable implies that $\langle g\rangle$ is non-deformable, which can  be applied, in particular, to the $2k$-Einstein structures in Theorem \ref{main2}. However, it is possible from the conclusion of this theorem to derive stronger  {\em rigidity} properties for the given structure. To explain this we recall that the decomposition (\ref{be}) implies the existence of a {local slice} $\mathcal V_g$ for the action of $D(X)$ in $\Mc(X)$ in a neighborhood $g$; see \cite{E}.

\begin{definition}\label{modulilocal}
The set of all $2k$-Einstein structures in $\mathcal V_g$ is called the {\em pre-moduli space} in a neighborhood of $\langle g\rangle$ and denoted by $\mathfrak E^{(2k)}_g(X)$.
\end{definition}

The moduli space itself, $\mathfrak E^{(2k)}(X)$, can be locally obtained from $\mathfrak E^{(2k)}_g(X)$ after passing to the quotient by the action of the isometry group of $(X,g)$, which is a  compact Lie group. However, we shall completely  ignore this issue and  deal directly with $\mathfrak E^{(2k)}_g(X)$. In particular, the definition below captures the notion of (local) rigidity of $2k$-Einstein structures.

\begin{definition}\label{rigidity}
$\langle g\rangle$ is {\em rigid} if it is an isolated element in $\mathfrak E^{(2k)}_g(X)$.
\end{definition}

The next result provides examples of rigid $2k$-Einstein structures.

\begin{theorem}\label{main3}
Under the conditions of Theorem \ref{main2}, $\langle g\rangle$ is rigid.
\end{theorem}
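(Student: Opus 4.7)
The plan is to exhibit the pre-moduli space $\mathfrak E^{(2k)}_g(X)$, locally near $\langle g\rangle$, as the zero set of a nonlinear operator whose linearization at $g$ can be shown to be an isomorphism under the hypotheses of Theorem \ref{main2}, and then conclude via the implicit function theorem in Banach spaces.

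To set things up, I would choose a local slice $\mathcal V_g$ for the $D(X)$-action at $g$, so that $T_g\mathcal V_g=\ker\delta_g$ by (\ref{be}). Invoking Moser's theorem (Theorem \ref{moser}) and Proposition \ref{equivalenteinstein}(4), it suffices to work inside $\mathcal W_g:=\mathcal V_g\cap\mathcal N_g$, whose tangent space at $g$ is precisely $\mathcal I_g=\ker\delta_g\cap\mathrm{tr}_g^{-1}(0)$. Define
\[
F\colon \mathcal W_g\longrightarrow \Cc^1(X),\qquad F(g')=\Rc^{(2k)}_{g'}-\frac{(2k)!}{n}\Sc^{(2k)}_{g'}\, g'.
\]
By Proposition \ref{constante}, a metric $g'\in \mathcal W_g$ is $2k$-Einstein if and only if $F(g')=0$, so that isolating $g$ inside $F^{-1}(0)$ yields the isolation of $\langle g\rangle$ in $\mathfrak E^{(2k)}_g(X)$.

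A direct calculation using $\Rc^{(2k)}_g=\lambda g$ with $\lambda=\frac{(2k)!}{n}\Sc^{(2k)}_g$ gives, for every $h\in\mathcal I_g$,
\[
dF_g\, h=\dot\Rc^{(2k)}_g h-\lambda h-\frac{(2k)!}{n}\bigl(\dot\Sc^{(2k)}_g h\bigr)g.
\]
The last summand is of lower differential order than $C^{(2k)}_g=\dot\Rc^{(2k)}_g-\lambda$, so the principal symbol of $dF_g|_{\mathcal I_g}$ agrees with that of $C^{(2k)}_g|_{\mathcal I_g}$, which is elliptic by Theorem \ref{main}. Moreover, writing $\Rc^{(2k)}_g=(2k-1)!\bigl(\Jc^{(2k)}_g+\Sc^{(2k)}_g g\bigr)$ and invoking Proposition \ref{grad}, the tensor $\Jc^{(2k)}_g$ is (up to sign) the $L^2$-gradient of the geometric functional $\Fc^{(2k)}$; hence its linearization is the symmetric Hessian of $\Fc^{(2k)}$, and, after carefully accounting for the trace-type remainder above, one concludes that $C^{(2k)}_g|_{\mathcal I_g}$ is formally self-adjoint.

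Combining ellipticity, formal self-adjointness, and the triviality of the kernel furnished by Theorem \ref{main2}, the restriction of $dF_g$ to a suitable H\"older completion of $\mathcal I_g$ becomes a topological isomorphism. The implicit function theorem then produces a neighborhood of $g$ in $\mathcal W_g$ within which $F^{-1}(0)=\{g\}$, and standard elliptic regularity upgrades nearby $C^{k,\alpha}$-solutions to smooth ones, establishing rigidity. The main obstacle I foresee is the verification of self-adjointness of $C^{(2k)}_g|_{\mathcal I_g}$: one must check that the trace-type remainder $\frac{(2k)!}{n}(\dot\Sc^{(2k)}_g h)g$, which a priori leaves $\mathcal I_g$, combines with the appropriate projection onto $\ker\mathrm{tr}_g$ so as to preserve symmetry. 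Apart from this technicality, the argument follows the standard blueprint from the classical Einstein case.
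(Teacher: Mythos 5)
Your overall strategy (work in a slice, linearize the operator $\mathcal G(g')=\Rc^{(2k)}_{g'}-\frac{(2k)!}{n}\Sc^{(2k)}_{g'}g'$, and invoke an implicit function theorem) is the same blueprint the paper follows, but the decisive step is asserted rather than proved, and as stated it is false. The paper obtains Theorem \ref{main3} as an immediate corollary of Theorem \ref{main4}: the pre-moduli space $\mathfrak E^{(2k)}_g(X)$ sits inside an analytic manifold whose tangent space at $\langle g\rangle$ is $\varepsilon^{(2k)}_{\langle g\rangle}$, which is trivial under the hypotheses of Theorem \ref{main2}. The entire difficulty, addressed at length in Section \ref{demresrig}, is that $\dot{\mathcal G}_g$ is \emph{never} surjective onto $\Cc^1(X)$: by Proposition \ref{dr}, $\beta^{(2k)}_g\dot{\mathcal G}_g=0$, so the image lies in the kernel of a first-order operator, reflecting diffeomorphism invariance. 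Consequently your claim that $dF_g$ restricted to (a completion of) $\mathcal I_g$ ``becomes a topological isomorphism'' cannot hold with codomain $\Cc^1(X)$; at best one can hope that the range is \emph{closed and complemented}, project onto it, and apply the submersion/immersion theorem to $\pi\circ\mathcal G$. Establishing that closedness is precisely the technical core of the paper's argument (the modified elliptic operator $\tilde C^{(2k)}_g$, Lemma \ref{invar}, and the analysis of $G_g=\delta_g\delta_g^*-\frac12 d\delta_g$ acting on $\delta_gh$), and your proposal supplies no substitute for it.

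Two further inaccuracies: the remainder $\frac{(2k)!}{n}(\dot\Sc^{(2k)}_gh)g$ is \emph{not} of lower differential order than $C^{(2k)}_g$ --- it is a second-order operator in $h$; what saves you is that by Proposition \ref{linescar} it vanishes identically on $\mathcal I_g$ (where ${\rm tr}_gh=0$ and $\delta_gh=0$), so that $dF_g|_{\mathcal I_g}=C^{(2k)}_g|_{\mathcal I_g}$ exactly, not merely to principal order. And the self-adjointness you flag as the main obstacle is a red herring: $C^{(2k)}_g|_{\mathcal I_g}=\mu_k k(n-2k)C_{n,k}(\frac12\nabla^*\nabla+P_g)$ by (\ref{restrictoper}) is the restriction of a manifestly formally self-adjoint elliptic operator, but it does not map $\mathcal I_g$ into $\mathcal I_g$, so self-adjointness buys you nothing directly; the issue is the identification of the correct target and the closed-range property. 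If you repair the argument by proving closedness of $dF_g(\mathcal I_g)$ (e.g.\ via the Fredholm property of $\frac12\nabla^*\nabla+P_g$ on all of $\Cc^1(X)$) and then use injectivity from Theorem \ref{main2} to conclude that $F$ is a local immersion, hence locally injective, you recover essentially the paper's proof of Theorem \ref{main4} specialized to trivial kernel; you would also need to justify carefully the reduction to $\mathcal W_g=\mathcal V_g\cap\mathcal N_g$, since Moser's theorem moves a nearby $2k$-Einstein metric into $\mathcal N_g$ but possibly out of the slice $\mathcal V_g$.
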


\begin{corollary}\label{constcurv2}
If $(X,g)$ is a space form of sectional curvature $\mu\neq 0$ then $\langle g\rangle $ is rigid.
\end{corollary}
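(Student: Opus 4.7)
The plan is to derive Corollary \ref{constcurv2} directly from Theorem \ref{main3}. All that is needed is to check that a space form $(X,g)$ with sectional curvature $\mu \neq 0$ satisfies the hypotheses of Theorem \ref{main2}, namely that $(X,g) \in \Hc_{n,k}$ and that one of the two strict pinching inequalities on $\Rcc_g|_{\mathrm{tr}_g^{-1}(0)}$ holds.

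For the membership in $\Hc_{n,k}$, I would invoke Remark \ref{const}, which translates constant sectional curvature $\mu$ into the double-form identity $R_g = \tfrac{\mu}{2}\,g^2$. Raising to the $(k-1)$-th power in the algebra $\Ac^{\bullet,\bullet}(X)$ gives
\[
R_g^{k-1} = \left(\frac{\mu}{2}\right)^{k-1} g^{2k-2},
\]
which is precisely (\ref{work}) with $\mu_k = (\mu/2)^{k-1} \neq 0$. The $2k$-Einstein property of $g$ is then obtained by applying the contraction $c_g^{2k-1}$ to $R_g^{k} = (\mu/2)^k g^{2k}$ and iterating the Kulkarni-type commutation rule (\ref{kulkarni}), which produces a scalar multiple of $g$; hence $\Rc^{(2k)}_g = \lambda g$ for a constant $\lambda$.

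For the pinching condition, I would mimic the proof of Corollary \ref{constcurv}. Remark \ref{casesp} asserts that on a constant curvature manifold $\Rcc_g h = -\mu h$ whenever $\mathrm{tr}_g h = 0$, so in the notation of (\ref{pinching}) one has $\underline a_0 = \overline a_0 = -\mu$. Moreover $\kappa_g = n(n-1)\mu$, and the constants $\underline\alpha_{n,k},\overline\alpha_{n,k}$ defined in (\ref{const1})--(\ref{const2}) are strictly positive for $k \geq 2$ and $n \geq 5$. Consequently, if $\mu > 0$ then
\[
\overline a_0 = -\mu < 0 < \overline\alpha_{n,k}\,\kappa_g,
\]
while if $\mu < 0$ then
\[
\underline a_0 = -\mu > 0 > \underline\alpha_{n,k}\,\kappa_g,
\]
so in either case the appropriate inequality of Theorem \ref{main2} holds strictly.

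With both hypotheses verified, Theorem \ref{main3} applies and gives that $\langle g \rangle$ is rigid. There is no real obstacle here: the entire argument amounts to a sign analysis essentially identical to the one already carried out for Corollary \ref{constcurv}, with all the substantive work packaged into Theorem \ref{main3}.
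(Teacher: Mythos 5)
Your proposal is correct and follows essentially the same route as the paper: the paper derives Corollary \ref{constcurv2} directly from Theorem \ref{main3} after verifying, exactly as in the proof of Corollary \ref{constcurv}, that a non-flat space form lies in $\Hc_{n,k}$ and satisfies one of the pinching inequalities via $\underline a_0=\overline a_0=-\mu$ and $\kappa_g=n(n-1)\mu$. Your sign analysis and the verification of $R_g^{k-1}=(\mu/2)^{k-1}g^{2k-2}$ match the paper's implicit argument.
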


Actually, Theorem \ref{main3} is a straightforward consequence of a more general result that elucidates the local structure of $\mathfrak E^{(2k)}_g(X)$, with $\langle g\rangle$ under the conditions of Theorem \ref{main}.

\begin{theorem}\label{main4}
 If $\langle g\rangle$ satisfies the assumptions of Theorem \ref{main} then $\mathfrak E^{(2k)}_g(X)$ has, in a neighborhood of $\langle g\rangle$, the structure of an analytical subset contained in a analytical manifold whose tangent space in $\langle g\rangle$ is precisely $\varepsilon^{(2k)}_{\langle g\rangle}$.
\end{theorem}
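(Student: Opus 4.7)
The plan is to realize $\mathfrak{E}^{(2k)}_g(X)$, near $\langle g\rangle$, as the zero locus of a real-analytic map of Banach manifolds whose linearization at $g$ is elliptic and Fredholm, and then apply a Kuranishi/Lyapunov--Schmidt finite-dimensional reduction. The final output will be an analytic manifold (the Kuranishi model) containing the pre-moduli space as an analytic subset, with tangent space at $\langle g \rangle$ equal to $\varepsilon_{\langle g\rangle}^{(2k)}$.

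First I would set things up on appropriate Sobolev completions. Fix a large $s$ and work with the $H^s$-completions $\Mc^s(X)$, $S^{2,s}(X)$, etc. By Ebin's slice theorem, combined with Moser's Theorem \ref{moser} and the discussion following Definition \ref{defgenui2k}, a neighborhood of $\langle g\rangle$ in $\Mc_1^s(X)/D^{s+1}(X)$ is real-analytically modeled by an open neighborhood $\mathcal U$ of $0$ in the space $\mathcal I_g^s=\{h\in S^{2,s}(X):\delta_gh=0,\ \tr_gh=0\}$ via $h\mapsto g+h$ (after a small nonlinear correction to land in $\Nc_g$). Next I would define a real-analytic map
\[
\Phi:\mathcal U\longrightarrow \mathcal I_g^{s-2k},\qquad
\Phi(h)=\Pi_g\!\left(\Rc^{(2k)}_{g+h}-\lambda(g+h)\,(g+h)\right),
\]
where $\lambda(g')=\tfrac{(2k)!}{n}\Sc^{(2k)}_{g'}$ (the forced value of the $2k$-Einstein constant, cf.\ (\ref{const})) and $\Pi_g$ is the $L^2$-orthogonal projection onto $\ker\delta_g\cap \tr_g^{-1}(0)$. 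Real-analyticity is clear: $\Rc^{(2k)}_{g+h}$ is a polynomial expression in the components of $g+h$, $(g+h)^{-1}$, and their derivatives up to order two. By Proposition \ref{constante} and Proposition \ref{equivalenteinstein}, the zero locus $\Phi^{-1}(0)$ parametrizes exactly $\mathfrak E^{(2k)}_g(X)$ in a neighborhood of $\langle g\rangle$.

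The linearization of $\Phi$ at $0$ agrees, on $\mathcal I_g$, with the operator $C_g^{(2k)}=\dot\Rc^{(2k)}_g-\lambda$ of (\ref{linearlg}), modulo a term proportional to $g$ which is killed by $\Pi_g$. By Theorem \ref{main} (whose proof relies on the explicit linearization formula of Section \ref{lineariricc} together with the curvature condition (\ref{condcurva})), $C_g^{(2k)}|_{\mathcal I_g}$ is an elliptic operator on the closed manifold $X$, hence Fredholm and essentially self-adjoint. Let $K=\ker C_g^{(2k)}|_{\mathcal I_g}=\varepsilon_{\langle g\rangle}^{(2k)}$ and let $K^\perp$ denote its $L^2$-complement in $\mathcal I_g$; both are preserved under smooth Sobolev scales, and $C_g^{(2k)}$ restricts to an isomorphism $K^\perp\to K^\perp$ on the appropriate scale.

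At this point I would perform the standard Kuranishi reduction. Writing $h=h_0+h_1$ with $h_0\in K$ and $h_1\in K^\perp$, and decomposing $\Phi=\Phi_0\oplus\Phi_1$ along $K\oplus K^\perp$, the equation $\Phi_1(h_0,h_1)=0$ can be solved analytically for $h_1=\psi(h_0)$ as a function of $h_0$ in a small neighborhood of the origin in $K$, by the real-analytic implicit function theorem in Banach spaces; indeed $\partial_{h_1}\Phi_1(0,0)=C_g^{(2k)}|_{K^\perp}$ is an isomorphism. Substituting yields a real-analytic obstruction map
\[
\Psi:K\to K,\qquad \Psi(h_0)=\Phi_0(h_0,\psi(h_0)),\qquad \Psi(0)=0,\quad d\Psi(0)=0.
\]
Then $\mathfrak E^{(2k)}_g(X)$ is locally identified with $\Psi^{-1}(0)$ sitting inside the finite-dimensional real-analytic manifold $K$, which is exactly the statement of the theorem: the tangent space at $\langle g\rangle$ is $K=\varepsilon_{\langle g\rangle}^{(2k)}$.

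The main obstacle is the ellipticity/Fredholm input, which is precisely the content of Theorem \ref{main} and depends crucially on (\ref{condcurva}); once that is in hand, everything else is the standard analytic Kuranishi package, as carried out in analogous Einstein settings (cf.\ \cite{BE}, and the spherical space form case in \cite{dLS1}). A secondary technical point is ensuring that the bootstrap in Sobolev regularity yields smooth solutions, which follows from elliptic regularity applied to $C_g^{(2k)}$ and the fact that smoothness is preserved by $\psi$ and $\Psi$.
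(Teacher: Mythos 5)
Your overall strategy (restrict to an Ebin slice, then perform a Lyapunov--Schmidt/Kuranishi reduction using the ellipticity supplied by Theorem \ref{main}) is the same idea the paper implements, but your specific reduction has a genuine gap at the computation of $d\Phi(0)$. You assert that the linearization of $\Phi(h)=\Pi_g\bigl(\Rc^{(2k)}_{g+h}-\lambda(g+h)(g+h)\bigr)$ agrees on $\mathcal I_g$ with $C^{(2k)}_g$ ``modulo a term proportional to $g$ which is killed by $\Pi_g$''. That is not correct: by (\ref{restrictoper}), for $h\in\mathcal I_g$ one has $C^{(2k)}_gh=\mu_k k(n-2k)C_{n,k}\bigl(\tfrac12\nabla^*\nabla h+P_gh\bigr)$, and while this is trace-free it is in general \emph{not} divergence-free (the commutators of $\delta_g$ with $\nabla^*\nabla$ and with $\Rcc_g$ are nonzero curvature terms), so $\Pi_g$ also kills a component along ${\rm im}\,\delta^*_g$. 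Hence $d\Phi(0)=\Pi_g\circ C^{(2k)}_g|_{\mathcal I_g}$ is the \emph{compression} of $C^{(2k)}_g$ to $\mathcal I_g$; its kernel is $\{h\in\mathcal I_g:\;C^{(2k)}_gh\perp\mathcal I_g\}$, which contains but need not equal $\varepsilon^{(2k)}_{\langle g\rangle}=\ker C^{(2k)}_g|_{\mathcal I_g}$ (a compression of a self-adjoint isomorphism to a subspace can even vanish identically on that subspace). For the same reason the assertion that ``$C^{(2k)}_g$ restricts to an isomorphism $K^\perp\to K^\perp$'' is unjustified: $C^{(2k)}_g$ does not map $\mathcal I_g$ into itself, and what your implicit function theorem step really requires is that $\Pi_g C^{(2k)}_g:K^\perp\to K^\perp$ be an isomorphism, which you have not established. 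As written, your Kuranishi model could have tangent space strictly larger than $\varepsilon^{(2k)}_{\langle g\rangle}$, so the theorem's conclusion is not reached.

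The paper avoids this by never projecting onto the slice: it keeps the target equal to all of $\Cc^1(X)$, proves that the range $C^{(2k)}_g(T_g\mathcal V_g)$ is \emph{closed} (this is the content of Lemma \ref{invar} together with the identity $\beta^{(2k)}_g\tilde C^{(2k)}_gh=C_{n,k}k(n-2k)\mu\, G_g(\delta_gh)$ with $G_g$ elliptic), and composes $\mathcal G$ with the orthogonal projection $\pi$ onto that range. Then $\pi\, C^{(2k)}_gh=0$ together with $C^{(2k)}_gh$ already lying in the range forces $C^{(2k)}_gh=0$, so the kernel of the projected linearization is exactly $\varepsilon^{(2k)}_{\langle g\rangle}$; the submersion theorem produces the analytic manifold with the correct tangent space, and the premoduli space is the zero set of the analytic map $\mathcal G$ restricted to it. To repair your argument you must either adopt this ``project onto the range'' scheme, or supply a separate proof (presumably via the generalized Bianchi identity $\beta^{(2k)}_g\dot{\mathcal G}_g=0$ of Proposition \ref{dr}) that $h\in\mathcal I_g$ and $C^{(2k)}_gh\perp\mathcal I_g$ force $C^{(2k)}_gh=0$. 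A secondary overstatement: $\Phi(h)=0$ only says the $\mathcal I_g$-component of $\mathcal G(g+h)$ vanishes, so $\Phi^{-1}(0)$ contains but need not ``parametrize exactly'' the premoduli space; this does not obstruct the ``analytic subset'' conclusion (the premoduli space still sits inside the graph of $\psi$ as the zero set of an analytic map), but it should be stated as a containment.
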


\begin{remark}\label{general}
{\rm For the spherical case ($\mu>0$), Corollaries \ref{constcurv}} and \ref{constcurv2} were first obtained in \cite{dLS1}.
\end{remark}

\section{Linearizing the $2k$-Ricci tensor}
\label{lineariricc}

The proofs of the finiteness and rigidity results stated in the previous section rely on a calculation of the operator $C^{(2k)}_g$ defined in Remark \ref{ellipt} above, which by its turn rests on the linearization of the $2k$-Ricci map $g\in\Mc(X)\mapsto {\Rc}^{(2k)}_g\in \Cc^1(X)$; see Proposition \ref{linearizado} below. We start by recalling some preliminary results proved in \cite{L1}, \cite{L2} and \cite{dLS1}.

For $h\in \Cc^1(X)$ it is defined in \cite{L2} the linear mapping
$F_h:\Cc^r(X)\to \Cc^r(X)$
as follows: for any $p\in X$ and $\{e_1,\ldots ,e_n\}$ an orthonormal basis of $T_pX$ diagonalizing $h$, set
\begin{eqnarray*}
&&(F_h\omega)(e_{i_1}\wedge \ldots \wedge e_{i_r},e_{j_1}\wedge \ldots
\wedge
e_{j_r})=\\
&&\qquad
=\Big(\sum_{k=1}^rh(e_{i_k},e_{i_k})+\sum_{k=1}^rh(e_{j_k},e_{j_k})
\Big)\omega(e_{i_1}\wedge \ldots \wedge e_{i_p},e_{j_1}\wedge \ldots
\wedge e_{j_r}).
\end{eqnarray*}
Consider also the operator
$\Dc^2:\Cc^1(X)\to \Cc^2(X)$
given by
\begin{eqnarray}
 \Dc^2 h (x_1\wedge x_2,y_1\wedge y_2)&=& \nabla^2_{y_1,x_1}h(x_2,
y_2)+ \nabla^2_{x_1,y_1}h(x_2, y_2)\nonumber\\
&&+\nabla^2_{y_2,x_2}h(x_1,y_1)+\nabla^2_{x_2,y_2}h (x_1, y_1)\nonumber\\
&&-\nabla^2_{y_1,x_2}h (x_1,y_2)-\nabla^2_{x_2,y_1}h (x_1, y_2)\nonumber\\
&&-\nabla^2_{y_2,x_1}h(x_2,y_1)-\nabla^2_{x_1,y_2}h (x_2, y_1) \label{d2},
\end{eqnarray}
where
\[
\nabla^2_{x,y}h=\nabla_x\nabla_yh-\nabla_{\nabla_xy}h
\]
is the usual Hessian operator.
The relevance of these concepts is illustrated by the following lemma, proved in \cite{L2}.

\begin{lemma}\label{prel}
The linearization of the curvature tensor is given by
\begin{equation}
\dot R_g h=-\frac14 \Dc^2 h+\frac14 F_h(R_g). \label{curvlin}
\end{equation}
\end{lemma}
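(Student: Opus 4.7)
The approach is to differentiate the Levi-Civita connection along the path $g_t=g+th$ and feed the result into the defining formula for the curvature. First I would recall that if $\nabla^t$ denotes the Levi-Civita connection of $g_t$ and $\dot\Gamma(X,Y):=\frac{d}{dt}|_{t=0}\nabla^t_XY$, then $\dot\Gamma$ is the symmetric $(2,1)$-tensor characterized by
\[
2g(\dot\Gamma(X,Y),Z)=(\nabla_X h)(Y,Z)+(\nabla_Y h)(X,Z)-(\nabla_Z h)(X,Y).
\]
Since $R(X,Y)Z=\nabla_X\nabla_YZ-\nabla_Y\nabla_XZ-\nabla_{[X,Y]}Z$ depends on $g_t$ only through $\nabla^t$, differentiating gives at once
\[
\dot R(X,Y)Z=(\nabla_X\dot\Gamma)(Y,Z)-(\nabla_Y\dot\Gamma)(X,Z)
\]
as a $(3,1)$-tensor, involving only second covariant derivatives of $h$.

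To pass to the double form $\dot R_g h\in\Cc^2(X)$, one lowers the last index using $g_t$, and the $t$-dependence of the lowered metric contributes an extra algebraic term of shape $h(R(X,Y)Z,\,\cdot\,)$ alongside $g(\dot R(X,Y)Z,\,\cdot\,)$. Expanding $(\nabla_X\dot\Gamma)(Y,Z)$ via the formula above produces four second covariant derivatives of $h$, and doing the same for the other summand yields eight Hessian terms in total. Using the symmetries defining $\Cc^2(X)$ — exchange of the two wedge factors together with skew symmetry inside each pair — I would symmetrize each Hessian in the order of its two differentiations, which packages these contributions into precisely $-\frac14 \Dc^2 h$ as defined in (\ref{d2}); commuting covariant derivatives in this symmetrization produces additional algebraic curvature terms of the form $h\cdot R_g$.

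It remains to identify the total algebraic curvature contribution with $\frac14 F_h(R_g)$. The guiding observation is that $F_h$ is defined precisely to record the simultaneous action of $h$ on all four arguments of a $(2,2)$-double form, and each curvature remainder introduced above contributes the action of $h$ on one of the arguments of $R_g$; summing the four produces $F_h(R_g)$ with the stated coefficient. The main delicacy, and essentially the only nontrivial point, lies in this last bookkeeping: one has to check that the curvature remainders coming from three distinct sources — variation of the lowered metric, the first Bianchi identity used to rearrange the expression into $\Cc^2$-form, and the commutators generated while symmetrizing Hessians — combine with exactly the coefficient $1/4$, rather than merely lying in the same tensorial class as $F_h(R_g)$.
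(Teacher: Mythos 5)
The paper does not prove this lemma itself; it simply cites \cite{L2}, where the computation is carried out along exactly the lines you describe: variation of the Christoffel symbols, the classical formula for $\dot R$ as a $(3,1)$-tensor, lowering an index with $g_t$, and reorganization into the double-form format. So your strategy is the right one, and the second-order part of your argument is essentially complete: the four Hessians in the classical variation formula, once symmetrized in the order of differentiation, are precisely the eight terms of $-\frac14\Dc^2 h$. (One caveat: the paper adheres to the sign conventions of \cite{Be}, which are opposite to the convention $R(X,Y)Z=\nabla_X\nabla_YZ-\nabla_Y\nabla_XZ-\nabla_{[X,Y]}Z$ you start from, so you must track an overall sign to land on (\ref{curvlin}) rather than its negative.)

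The genuine gap is in the last step, which you flag but do not carry out, and which is the only non-classical content of the lemma. Your summary of the algebraic remainder --- ``each curvature remainder contributes the action of $h$ on one of the arguments of $R_g$; summing the four produces $F_h(R_g)$'' --- is not accurate as stated. The term from lowering the index contributes $h$ acting on a single slot with coefficient $1$; the commutator from antisymmetrizing in the first two arguments contributes $h$ acting on the last two slots with coefficient $\tfrac12$ each; and the commutators generated by symmetrizing the Hessians produce curvature operators $R(x_i,y_j)$ on \emph{mixed} pairs of arguments, which are not individually of the form ``$h$ acting on a slot of $R_g(x_1\wedge x_2\otimes y_1\wedge y_2)$'' at all. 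Only after the first Bianchi identity is used to re-express these mixed terms do the contributions redistribute so that all four slots receive the same total coefficient, and without performing that rearrangement one cannot conclude that the remainder is a multiple of $F_h(R_g)$ rather than some other tensor built from $h$ and $R_g$ with the same symmetries; the coefficient $\tfrac14$ is exactly what is at stake. A clean way to close the gap: the zeroth-order part of the identity is tensorial in $h$ and $R_g$, so it suffices to verify it at a single point in an orthonormal frame diagonalizing $h$, where $F_h$ reduces to multiplication by a sum of four eigenvalues and the Bianchi bookkeeping becomes a finite, explicit check.
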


We show in Corollary \ref{maincorollary} below that if $(X,g)\in \Hc_{n,k}$ then the second order term in
$
C^{(2k)}_g=\dot{\Rc}^{(2k)}_g-\lambda
$
is completely determined by the first and second order contractions of $\dot R_g$. Hence, in view of (\ref{curvlin}), it is crucial to determine such contractions for $\Dc^2h$ and $F_h(R_g)$. Such  a calculation has been carried out in
\cite{dLS1}.

\begin{proposition}\label{mainlema}\cite{dLS1}
For any metric $g\in\Mc(X)$, and given $h\in \Cc^1(X)$, the following identities hold:
\begin{enumerate}
\item If $\nabla^*\nabla$ is the Bochner Laplacian acting on $\Cc^1(X)$ then
\begin{equation}
c_g\Dc^2 h=-2\nabla^*\nabla h+2\nabla^2{\rm tr}_gh+4\delta^{*}_g\delta_g h-(\Rc^{(2)}_g\circ h+h\circ \Rc^{(2)}_g) +2\!\Rcc_g\!\! h;
\end{equation}
\item If $\Delta_g$ is the Laplacian associated to  $g$ then
\begin{equation}
c_g^2\Dc^2 h = -4\Delta_g {\rm tr}_gh-4\delta_g \delta_g h;
\end{equation}
\item $c_gF_h(R_g)=\Rc^{(2)}_g\circ h+h\circ \Rc_g^{(2)}+ 2\!\Rcc_g\!\!h$;
\item $c_g^2F_h(R_g)=4\langle \Rc_g^{(2)},h\rangle$.
\end{enumerate}
Here, $\Rc^{(2)}_g={\rm Ric}_g$ is the Ricci tensor.
\end{proposition}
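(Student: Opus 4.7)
The plan is to establish all four identities by pointwise computation: fix $p\in X$ and choose a local orthonormal frame $\{e_i\}$ that is geodesic at $p$, so $\nabla_{e_i}e_j|_p=0$ and, at $p$, the Hessian operator $\nabla^2_{x,y}$ reduces to $\nabla_x\nabla_y$, while the Bochner Laplacian acting on $\Cc^1(X)$ is $-\sum_i\nabla^2_{e_i,e_i}$. All four identities are tensorial, so verifying them at such a $p$ suffices.

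For items (1) and (2) I would start from the definition (\ref{d2}) and form
\[
(c_g\Dc^2 h)(x,y)=\sum_i (\Dc^2 h)(e_i\wedge x,\,e_i\wedge y),
\]
which unpacks into eight sums. Several of them, such as $\sum_i\nabla^2_{x,e_i}h(e_i,y)$, need the covariant derivatives swapped before they can be identified with $\delta^*_g\delta_g h$ or with $\nabla^2{\rm tr}_g h$; each such swap uses the Ricci identity
\[
(\nabla^2_{u,v}-\nabla^2_{v,u})h(a,b)=-h(R_g(u,v)a,b)-h(a,R_g(u,v)b)
\]
and generates curvature terms. Summing over $i$, these curvature terms assemble precisely into $-({\rm Ric}_g\circ h + h\circ {\rm Ric}_g)+2\Rcc_g h$ by the definitions (\ref{jj}) and (\ref{jjj}), while the reordered derivatives produce $-2\nabla^*\nabla h$, $2\nabla^2{\rm tr}_g h$ and $4\delta^*_g\delta_g h$, giving (1). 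Contracting once more and using the elementary traces ${\rm tr}_g({\rm Ric}_g\circ h)={\rm tr}_g(\Rcc_g h)=\langle{\rm Ric}_g,h\rangle$ together with ${\rm tr}_g\nabla^*\nabla h=\Delta_g {\rm tr}_g h$, all curvature contributions cancel and only $-4\Delta_g{\rm tr}_g h-4\delta_g\delta_g h$ survives, yielding (2).

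For items (3) and (4) I would additionally diagonalize $h$ at $p$, so that $h(e_i,e_j)=\lambda_i\delta_{ij}$. By the definition of $F_h$,
\[
(c_g F_h(R_g))(e_a,e_b)=\sum_i\bigl(2\lambda_i+h(e_a,e_a)+h(e_b,e_b)\bigr)R_g(e_i\wedge e_a,\,e_i\wedge e_b).
\]
The $\lambda_i$-piece rebuilds $2\Rcc_g h(e_a,e_b)$ by (\ref{jjj}), while the $h(e_a,e_a)$ and $h(e_b,e_b)$ pieces rebuild $(h\circ{\rm Ric}_g+{\rm Ric}_g\circ h)(e_a,e_b)$ by (\ref{jj}), yielding (3). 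A further contraction, together with the same trace identities used above, gives (4).

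The main obstacle is the sign and coefficient bookkeeping in (1): the eight terms of $\Dc^2 h$ produce many near-duplicates, and the Ricci identity has to be applied in exactly the right order so that the curvature outputs combine into the specific pairing $-({\rm Ric}_g\circ h + h\circ {\rm Ric}_g)+2\Rcc_g h$ with the correct coefficient $+4$ in front of $\delta^*_g\delta_g h$. Once (1) is obtained in this form, items (2)--(4) follow by direct algebraic manipulation.
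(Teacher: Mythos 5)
Your outline is correct, and it is worth noting at the outset that the paper itself offers no proof of this proposition: it is stated with a citation to \cite{dLS1}, where the computation "has been carried out". So there is nothing in the present paper to compare against line by line; your direct pointwise verification in a geodesic orthonormal frame is the natural (and, as far as one can tell, the originally intended) route. The plan correctly identifies every ingredient: contracting the eight terms of $\Dc^2h$ against $e_i\wedge x\otimes e_i\wedge y$, recognizing $\sum_i\nabla^2_{e_i,e_i}$ as $-\nabla^*\nabla$, $\sum_i\nabla^2_{x,y}h(e_i,e_i)$ as the Hessian of ${\rm tr}_gh$, $-\sum_i\nabla^2_{x,e_i}h(e_i,y)$ as $\nabla_x(\delta_gh)(y)$, and using the Ricci identity to convert the two "wrong-order" terms into further $\delta^*_g\delta_g$ contributions plus the curvature block $-(\Rc^{(2)}_g\circ h+h\circ\Rc^{(2)}_g)+2\Rcc_g h$. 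Items (3) and (4) are genuinely elementary as you describe, since $F_h$ is purely algebraic, and your trace identities ${\rm tr}_g(\Rcc_gh)={\rm tr}_g(h\circ{\rm Ric}_g)=\langle{\rm Ric}_g,h\rangle$ and ${\rm tr}_g\nabla^*\nabla h=\Delta_g{\rm tr}_gh$ are exactly what makes the curvature terms cancel in (2) (together with ${\rm tr}_g\nabla^2f=-\Delta_gf$ and ${\rm tr}_g\delta^*_g\omega=-\delta_g\omega$ in the paper's sign conventions, which you should state explicitly since the $-4\Delta_g{\rm tr}_gh$ coefficient depends on them). The one caveat is that your write-up is a plan rather than an executed computation: the entire content of item (1) lives in the sign and coefficient bookkeeping that you defer, and a complete proof would have to display the eight contracted sums and the two applications of the Ricci identity. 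A useful independent check that your coefficients must be right: combining (1)--(4) through Lemma \ref{prel} reproduces $c_g^2\dot R_gh=\Delta_g{\rm tr}_gh+\delta_g\delta_gh+\langle\Rc^{(2)}_g,h\rangle$, which together with the linearization of the contraction operator recovers the classical formula (\ref{curvscarvar}) for $\dot\kappa_gh$.
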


\begin{corollary}\label{maincorollary}
The first and second order contractions of $\dot R_gh$ are respectively given by
\begin{equation}
c_g\dot R_gh=\frac12 \left(\nabla^*\nabla h-\nabla^2{\rm tr}_gh-2\delta^{*}_g\delta_g h+(\Rc_g^{(2)}\circ h+h\circ \Rc_g^{(2)}) \right) \label{qq}
\end{equation}
and
\begin{equation}
c_g^2 \dot R_gh=\Delta_g {\rm tr}_gh+\delta_g \delta_g h+ \langle\Rc_g^{(2)},h\rangle.\label{rr}
\end{equation}
\end{corollary}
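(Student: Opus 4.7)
The plan is essentially mechanical: the corollary follows by combining Lemma \ref{prel} with the four identities of Proposition \ref{mainlema}. Since Lemma \ref{prel} gives
\[
\dot R_g h = -\tfrac14 \Dc^2 h + \tfrac14 F_h(R_g),
\]
applying $c_g$ (respectively $c_g^2$) to both sides reduces the task to a linear combination of the identities already listed in Proposition \ref{mainlema}, with all the geometric content packaged into those two preceding results.

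For (\ref{qq}), I would apply $c_g$ and substitute items (1) and (3) of Proposition \ref{mainlema}. The two contributions involving $\Rcc_g h$ carry opposite signs ($-\tfrac12$ from item (1) scaled by $-\tfrac14$, and $+\tfrac12$ from item (3) scaled by $+\tfrac14$) and therefore cancel, which is the one nontrivial bookkeeping step. On the other hand, the two appearances of $\Rc_g^{(2)}\circ h + h\circ \Rc_g^{(2)}$ both carry coefficient $+\tfrac14$ and add up to the factor $\tfrac12$ in (\ref{qq}). The terms $\nabla^*\nabla h$, $\nabla^2\mathrm{tr}_g h$, and $\delta_g^*\delta_g h$ come directly from item (1) scaled by $-\tfrac14$.

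For (\ref{rr}), I would apply $c_g^2$ and substitute items (2) and (4). This case is simpler: no cancellation is needed, because neither item (2) nor item (4) contains a $\Rcc_g h$ piece, so the stated formula can be read off after multiplying by $-\tfrac14$ and $+\tfrac14$ respectively.

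There is no genuine obstacle in carrying out the argument; the only care required is in tracking signs and the factor of $\tfrac14$ from Lemma \ref{prel} against the coefficients in Proposition \ref{mainlema}. The substance of the corollary really lies in those two preceding results, whose proofs (developed in \cite{L2} and \cite{dLS1}) involve the combinatorics of double forms, and in particular the commutation rule (\ref{commutation}) that governs how $c_g$ interacts with multiplication by $g$.
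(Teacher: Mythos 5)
Your proposal is correct and is exactly the computation the paper intends: the corollary is stated without explicit proof precisely because it follows by applying $c_g$ and $c_g^2$ to (\ref{curvlin}) and substituting the four items of Proposition \ref{mainlema}, with the $\Rcc_g h$ terms cancelling as you describe. All signs and coefficients in your bookkeeping check out.
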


\begin{remark}\label{caseins}
{\rm If $g$ is Einstein, ${\Rc}_g^{(2)}=\dfrac{\kappa_g}n g$, then
\[
{\Rc}_g^{(2)}\circ h+h\circ {\Rc}_g^{(2)}=\frac{2\kappa_g}n
h,
\]
from which we see that
\[
c_g(\Dc^2 h)=-2\nabla^*\nabla h+2\nabla^2 {\rm tr}_gh+4\delta^{*}_g\delta_g h-\frac{2\kappa_g}n
h +2\!\Rcc_g\!\! h.
\]}
\end{remark}

%\begin{remark}\label{remark}
%{\rm We note in passing that, as $\kappa_g=c_g^2R_g$, we have
%\begin{equation}\label{remark1}
%\dot\kappa_gh=2c_g(\dot c_g h)R_g+c^2_g\dot R_gh.
%\end{equation}
%But note that (\ref{dotcgh}) implies, as a consequence of (\ref{jjj0})???????, that
%\begin{equation}\label{remark2}
%c_g(\dot c_g h)R_g=-\langle\Rc^{(2)}_g,h\rangle,
%\end{equation}
%and combining  this with (\ref{rr}) leads to the expression for $\dot \kappa_gh$ in (\ref{curvscarvar}).}
%\end{remark}

With these preliminaries at hand, we now turn to the context of Theorem \ref{main}, so that $(X,g)$ is a $2k$-Einstein manifold with
\begin{equation}\label{curvcond}
R_g^{k-1}=\mu_k g^{2k-2}, \quad \mu_k\neq 0.
\end{equation}
Observe that  if $k=1$ the $2k$-Einstein condition reduces to the usual Einstein case, $\R_g^{(2)}=\lambda g$, while if $k=2$, (\ref{curvcond}) means that $(X,g)$ is a space-form; see Remark \ref{const}.
This case has already been treated in \cite{dLS1} so we may assume  from now on that
$k>2$. Note that for metrics satisfying (\ref{curvcond}), (\ref{commutation}) implies
\begin{eqnarray*}
\R^{(2k)}_g
& = & \mu_k c_g^{2k-1}(g^{2k-2}R_g)\\
&=&(2k-2)!\mu_k\sum_{r=1}^{2k-2 }C^{2k-1}_r
\prod_{i=0}^{r-1}(n-3-i)\frac{g^{2k-2-r}}{(2k-2-r)!}c_g^{2k-1-r}R_g\\
&=&(2k-2)!\mu_k C^{2k-1}_{2k-3} \prod_{i=0}^{2k-4}(n-3-i)gc_g^2R_g
+\\
& & \quad +(2k-2)!\mu_k C^{2k-1}_{2k-2}
\prod_{i=0}^{2k-3}(n-3-i)c_gR_g\\
&=&(2k-1)!\prod_{i=0}^{2k-4}(n-3-i)\mu_k \Big((k-1)\kappa_g g +(n-2k)\R_g\Big),
\end{eqnarray*}
that is,
\begin{equation} \label{2kricci}
\R^{(2k)}_g=\frac{(2k-1)!(n-3)!}{(n-2k)!}\mu_k \left((k-1)\kappa_g g
+(n-2k)\R_g^{(2)}\right).
\end{equation}

\begin{proposition}\label{equivform}
If $(X,g)\in \Hc_{n,k}$  then:
\begin{enumerate}
 \item $(X,g)$ is $2k$-Einstein if and only if it is Einstein;
 \item $(X,g)$ has constant $2k$-Gauss-Bonnet curvature if and only if it has constant scalar curvature.
\end{enumerate}
In particular, if $(X,g)$ is $2k$-Einstein, that is, $\R^{(2k)}_g=\lambda g$,
then
\begin{equation}\label{boler}
\lambda=\frac{(2k)!}{n}\Sc^{(2k)}_g= \frac{(n-2)!(2k)!}{(n-2k)!2n}\mu_k
\kappa_g=\frac{k(n-2)}{n}C_{n,k}\mu \kappa_g,
\end{equation}
where
\begin{equation}
C_{n,k}=\frac{(2k-1)!(n-3)!}{(n-2k)!}.\label{constant}
\end{equation}
\end{proposition}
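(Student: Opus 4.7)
The plan is to read everything off the identity (\ref{2kricci}), which has already been derived just above the statement. Write
$\R^{(2k)}_g = C_{n,k}\mu_k\bigl((k-1)\kappa_g\, g + (n-2k)\R^{(2)}_g\bigr)$
with $C_{n,k}$ as in (\ref{constant}), and recall the standing assumptions $\mu_k\neq 0$, $n\geq 5$, $2\leq 2k<n$, so the scalar factors $C_{n,k}$, $\mu_k$, $n-2k$ are all nonzero.

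For part (1) I would argue both directions directly from this identity. If $g$ is Einstein, then $\R^{(2)}_g = (\kappa_g/n)g$, and the formula exhibits $\R^{(2k)}_g$ as a function of $g$ times a scalar; since $\kappa_g$ is constant for Einstein metrics, one gets $\R^{(2k)}_g = \lambda g$ for a constant $\lambda$, so $g$ is $2k$-Einstein. Conversely, assume $\R^{(2k)}_g = \lambda g$ for some smooth function $\lambda$ (Proposition \ref{constante} will show it is constant a posteriori). Solving the identity for $\R^{(2)}_g$ yields
\[
(n-2k)C_{n,k}\mu_k\, \R^{(2)}_g \;=\;\bigl(\lambda-(k-1)C_{n,k}\mu_k\kappa_g\bigr)g,
\]
so $\R^{(2)}_g$ is pointwise proportional to $g$, which is the Einstein condition (and forces $\kappa_g$, hence the proportionality factor, to be constant when $n>2$ by Schur).

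For part (2) I would take the $g$-trace of (\ref{2kricci}), using $\tr_g g=n$, $\tr_g \R^{(2)}_g=\kappa_g$ and identity (\ref{cont}) (which gives $\tr_g\R^{(2k)}_g=(2k)!\Sc^{(2k)}_g$). A one-line calculation $n(k-1)+(n-2k)=k(n-2)$ then yields
\[
(2k)!\,\Sc^{(2k)}_g \;=\; k(n-2)\,C_{n,k}\,\mu_k\,\kappa_g,
\]
and since the constant on the right is nonzero under our hypotheses, $\Sc^{(2k)}_g$ is constant if and only if $\kappa_g$ is constant. This handles part (2).

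Finally, for the displayed formula (\ref{boler}), combine the trace identity just obtained with the relation $\lambda = \frac{(2k)!}{n}\Sc^{(2k)}_g$ from (\ref{const}) (i.e.\ Proposition \ref{constante}); substituting gives $\lambda = \frac{k(n-2)}{n}C_{n,k}\mu_k\kappa_g$, and rewriting $k(2k-1)! = (2k)!/2$ and $(n-2)(n-3)!=(n-2)!$ produces the alternative form $\lambda=\frac{(n-2)!(2k)!}{(n-2k)!\,2n}\mu_k\kappa_g$. There is no real obstacle here; the content is purely algebraic once (\ref{2kricci}) is in hand, and I expect the only thing to double-check is that the numerical factors match the version stated in (\ref{boler}).
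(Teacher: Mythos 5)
Your proposal is correct and follows essentially the same route as the paper: both directions of part (1) are read off the identity (\ref{2kricci}), part (2) and the formula (\ref{boler}) come from contracting that identity and using (\ref{cont}) together with $\lambda=\frac{(2k)!}{n}\Sc^{(2k)}_g$ from Proposition \ref{constante}. The paper merely states "the first item follows from (\ref{2kricci})" where you spell out the converse (solving for $\R^{(2)}_g$ and invoking Schur), and your arithmetic for the constants, including $k(2k-1)!=(2k)!/2$ and $(n-2)(n-3)!=(n-2)!$, matches (\ref{2k-scalar curv}) and (\ref{boler}) exactly.
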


\begin{proof}
The first item follows from (\ref{2kricci}). On the other hand, contracting both sides of this expression and taking into account that $c_g g=n$ and $c_g\R_g^{(2)}=\kappa_g$,
\[
c_g\R^{(2k)}_g= (2k)!\Sc^{(2k)}_g=
\frac{(2k)!(n-2)!}{(n-2k)!2}\mu_k \kappa_g,
\]
from which
\begin{equation}\label{2k-scalar curv}
\Sc^{(2k)}_g=\frac{(n-2)!}{(n-2k)!2}\mu_k \kappa_g,
\end{equation}
and the second item follows straightforwardly.
\end{proof}

We are finally in conditions to linearize the $2k$-Ricci tensor under the assumption $(X,g)\in\Hc_{n,k}$.
In view of (\ref{def}) we get, for an arbitrary metric $g$,
\begin{equation}\label{sy}
\dot\Rc^{(2k)}_gh = (2k-1)(\dot c_gh)c^{2k-2}_gR^k_g +
kc^{2k-1}_gR^{k-1}_g\dot R_gh,
\end{equation}
which under (\ref{curvcond}) reduces to
\begin{eqnarray}
\dot\Rc^{(2k)}_gh &=& (2k-1)(\dot c_gh)c^{2k-2}_g(\mu g^{2k-2}R_g) +
kc^{2k-1}_g\mu g^{2k-2}\dot R_gh\nonumber \\
&=&(2k-1)\mu (\dot c_gh)c^{2k-2}_g(g^{2k-2}R_g) +
k\mu c^{2k-1}_g g^{2k-2}\dot R_gh.\label{Agh&Bgh}
\end{eqnarray}
We now identify the terms
\begin{equation}
A_gh=(2k-1)\mu (\dot c_gh)c^{2k-2}_g(g^{2k-2}R_g),\quad B_gh=k\mu c^{2k-1}_g g^{2k-2}\dot R_gh \label{var1},
\end{equation}
in the above expression.

Let us start with the first one. Using
(\ref{commutation}),
\begin{eqnarray*}
c_g^{2k-2}(g^{2k-2}R_g)& = & g^{2k-2}c_g^{2k-2}R_g
+(2k-2)!\times \\
&&\quad \times \sum_{r=1}^{2k-2}C^{2k-2}_r
\prod_{i=0}^{r-1}(n-4-i)\frac{g^{2k-2-r}}{(2k-2-r)!}c_g^{2k-2-r}R_g,
\end{eqnarray*}
and since $k>2$ it follows that
\begin{eqnarray*}
c^{2k-2}_gg^{2k-2}R_g & = & (2k-2)!\sum_{r=1}^{2k-2} C^{2k-2}_r\prod_{i=0}^{r-1}(n-4-i)\frac{g^{2k-2-r}}{(2k-2-r)!}c^{2k-2-r}_gR_g\\
&=&(2k-2)! C^{2k-2}_{2k-4}\prod_{i=0}^{2k-5}(n-4-i)\frac{g^2}{2}c^2_gR_g+\\
&& \quad +(2k-2)! C^{2k-2}_{2k-3}\prod_{i=0}^{2k-4}(n-4-i)gc_gR_g+\\
&&\qquad +(2k-2)! C^{2k-2}_{2k-2}\prod_{i=0}^{2k-3}(n-4-i)R_g,
\end{eqnarray*}
hence
\begin{eqnarray*}
c^{2k-2}_gg^{2k-2}R_g
&=&\frac{(2k-2)!(n-4)!}{(n-2k)!} \Big[(k-1)(2k-3)\frac{g^2}{2}c^2_gR_g+\\
&& \quad + (2k-2)(n-2k)gc_gR_g+(n-2k)(n-2k-1)R_g\Big]\\
&=&\frac{(2k-2)!(n-4)!}{(n-2k)!} \Big[(k-1)(2k-3)\Sc_g^{(2)}\frac{g^2}{2}+ \\
& & \quad +(2k-2)(n-2k)g\R_g^{(2)}+(n-2k)(n-2k-1)R_g\Big].
\end{eqnarray*}
Now observe that, by Proposition \ref{equivform}, item 1,
$g$ is Einstein, $\R_g^{(2)}=\dfrac{\kappa_g}n g$, so that the first two terms in the bracket contribute to
\[
\dfrac{(k-1)\kappa_g}{n}\Big((2k-3)n
 +  4(n-2k)\Big)\dfrac{g^2}2=\dfrac{(k-1)(2kn+n-8k)\kappa_g}{n}\dfrac{g^2}2,
\]
which leads to
\begin{eqnarray*}
c^{2k-2}_gg^{2k-2}R_g & = &
\frac{(2k-2)!(n-4)!}{(n-2k)!} \Big[\dfrac{(k-1)(2kn+n-8k)\kappa_g}{2n}g^2+\\
&& \quad +(n-2k)(n-2k-1)R_g\Big].
\end{eqnarray*}
Then applying  $\dot c_gh$ to this identity and comparing with (\ref{var1}) and (\ref{constant}), we obtain
\begin{equation}\label{prelim}
A_gh= \frac{C_{n,k}\mu}{(n-3)}\Big[\dfrac{(k-1)(2kn+n-8k)\kappa_g}{n}(\dot c_gh)\dfrac{g^2}2+(n-2k)(n-2k-1)(\dot c_gh)R_g\Big],
\end{equation}
that is, $A_gh$ has been determined up to the terms $(\dot c_gh)g^2/2$ and $(\dot c_gh)R_g$, which we now analyze.

Initially, linearizing the identity $c_g(g^2/2)=(n-1)g$ gives, after using (\ref{kulkarni}),
\begin{eqnarray*}
(n-1)h & = & (\dot c_gh)(g^2/2)+c_ggh\\
& = & (\dot c_gh)(g^2/2)+gc_gh+(n-2)h,
\end{eqnarray*}
whence
\begin{equation}\label{cgh}
(\dot c_gh)(g^2/2)=h-\tr_ghg.
\end{equation}
On the other hand,
since $c_gR_g=\Rc_g^{(2)}$, it follows after linearization that
\begin{equation}\label{1}
(\dot c_gh)R_g=\dot \R_g^{(2)} h-c_g\dot R_g h,
\end{equation}
and if we substitute (\ref{ricciline}) and (\ref{curvlin}) into the right-hand side, a cancelation yields
\begin{equation}\label{dotcgh}
(\dot c_gh)R_g=
-\Rcc_g\!\! h,
\end{equation}
so that if we take (\ref{cgh}) and (\ref{dotcgh}) to (\ref{prelim}) we get
\begin{equation}
A_gh= \frac{C_{n,k}\mu}{(n-3)}\Big[\dfrac{(k-1)(2kn+n-8k)\kappa_g}{n}(h-(\tr_gh)g)-(n-2k)(n-2k-1)
\!\Rcc_g\!\! h\Big] \label{Agh}.
\end{equation}

To determine $B_gh$ we will use (\ref{commutation}) with
$k>1$, an assumption implying  in particular that $c^{2k-1}_g\dot R_gh=0$, because $\dot R_gh\in \Cc^2(X$). Thus,
\begin{eqnarray*}
c^{2k-1}_g\left(\frac{g^{2k-2}}{(2k-2)!}\dot R_gh\right) & =  & \sum_{r=1}^{2k-2}C^{2k-1}_r
\prod_{i=0}^{r-1}(n-3-i)\frac{g^{2k-2-r}}{(2k-2-r)!}c^{2k-1-r}_g\dot R_gh\\
 &= &  (2k-1)(k-1)\prod_{i=0}^{2k-4}(n-3-i)gc^2_g\dot R_gh  +\\
 & & \quad +(2k-1) \prod_{i=0}^{2k-3}(n-3-i)c_g\dot R_gh,
\end{eqnarray*}
so that
\begin{eqnarray*}
c^{2k-1}_gg^{2k-2}\dot
R_gh=\frac{(2k-1)!(n-3)!}{(n-2k)!}\left((k-1)(c^2_g\dot R_gh)g+
(n-2k)c_g\dot R_gh\right).\label{aa}
\end{eqnarray*}
Thus, taking into account Corollary \ref{maincorollary}, we obtain
\begin{eqnarray*}
c^{2k-1}_gg^{2k-2}\dot
R_gh & = & C_{n,k}\Big\{(k-1)\Big[\Delta_g \tr_gh+\delta_g \delta_g h+ \langle\R_g^{(2)},h\rangle\Big]g + \\
& & \quad +
(n-2k)\frac12 \Big[\nabla^*\nabla h-\nabla^2\tr_gh-\\
& & \qquad -2\delta^{*}_g\delta_g h+\R_g^{(2)}\circ h+h\circ \R_g^{(2)} \Big]\Big\},
\end{eqnarray*}
which gives, after using the Einstein condition (see Proposition \ref{equivform}),
\begin{eqnarray*}
B_gh&=&k\mu_k C_{n,k}\Big\{(k-1)\Big[\Delta_g \tr_gh+\delta_g \delta_g h+ \frac{\kappa_g}{n}\tr_gh\Big]g  \\
&& \quad + (n-2k)\frac12 \Big(\nabla^*\nabla h-\nabla^2\tr_gh-2\delta^{*}_g\delta_g h+\frac{2\kappa_g}{n}h \Big)\Big\}.\label{Bgh}
\end{eqnarray*}
Hence, if we substitute (\ref{Agh}) and (\ref{Bgh}) into (\ref{Agh&Bgh}) we obtain, after some simplifications, the expression for the linearization of the  $2k$-Ricci tensor of $(X,g)\in \Hc_{n,k}$:

\begin{eqnarray}\label{ricci2kline}
\dot\Rc^{(2k)}_gh &=& \mu C_{n,k}\bigg\{ k(n-2k)\frac12 \Big(\nabla^*\nabla h-\nabla^gd\tr_g(h)-2\delta^{*}_g\delta_g h \Big)+\nonumber\\
&&\quad + k(k-1)\Big(\Delta_g \tr_g(h)+\delta_g \delta_g h\Big)g+\nonumber\\
&&\qquad +\frac{\kappa_g}{n}\Big\{-(k-1)\Big(k+\frac{n-2k}{n-3}\Big)(\tr_gh)g+\\
& &\quad \quad \quad  +\Big(k(n-2)+\frac{(k-1)(n-2k)}{n-3}\Big)h\Big\}-\nonumber\\
&&\quad \quad\quad \quad -\frac{(n-2k)(n-2k-1)}{(n-3)}\!\Rcc_g\!\! h\bigg\}.\nonumber
\end{eqnarray}
In view of this the next result follows readily.

\begin{proposition}\label{linearizado}
If $(X,g)\in \Hc_{n,h}$  then
\begin{eqnarray*}
C_g^{(2k)}h&=& \mu_k C_{n,k}\bigg\{ k(n-2k)\frac12 \Big(\nabla^*\nabla h-\nabla^2\tr_gh-2\delta^{*}_g\delta_g h \Big)+\\
&&\quad + k(k-1)(\Delta_g \tr_gh+\delta_g \delta_g h)g+\\
&&\qquad +\frac{(n-2k)}{(n-3)}\Big\{\alpha(n,k)\kappa_g \tr_ghg+\frac{(k-1)\kappa_g}n h-(n-2k-1)\!\Rcc_g\!\! h\bigg\}.
\end{eqnarray*}
where $\alpha(n,k)$ depends only on $n$ and $k$. In particular,
\begin{eqnarray*}
C_g^{(2k)}|_{\tr_g^{-1}(0)}h&=&
\mu_k C_{n,k}\bigg\{ k(n-2k)\frac12 \Big(\nabla^*\nabla h-2\delta^{*}_g\delta_g h \Big)+\\
&&\quad + k(k-1)(\delta_g \delta_g h)g+\\
&&\qquad +\frac{(n-2k)}{(n-3)}\Big(\frac{(k-1)\kappa_g}n h-(n-2k-1)\!\Rcc_g\!\! h\Big)\bigg\},
\end{eqnarray*}
and
\begin{equation}
C_g^{(2k)}|_{\mathcal I_g}h=\mu_k k(n-2k)C_{n,k}\bigg\{ \frac12 \nabla^*\nabla h +P_gh\bigg\}, \label{restrictoper}
\end{equation}
where
\begin{equation}\label{restrictpot}
P_gh=\frac{(k-1)\kappa_g}{nk(n-3)} h-\frac{(n-2k-1)}{k(n-3)}\Rcc_g\!\! h.
\end{equation}
\end{proposition}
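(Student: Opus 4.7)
The plan is to obtain the three displayed formulas as an essentially algebraic consequence of the expression (\ref{ricci2kline}) already derived for $\dot\Rc^{(2k)}_g h$. Recall that, by definition, $C^{(2k)}_g h = \dot\Rc^{(2k)}_g h - \lambda h$, where $\lambda$ is determined by the $2k$-Einstein condition through (\ref{boler}), namely $\lambda = \frac{k(n-2)}{n} C_{n,k} \mu_k \kappa_g$. No further analytic input is required beyond (\ref{ricci2kline}); the argument is entirely bookkeeping on constants already in play.

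First, I would carry out the subtraction $\dot\Rc^{(2k)}_g h - \lambda h$. The coefficient of $h$ (with no curvature operator applied) appearing in (\ref{ricci2kline}) is
\[
\mu_k C_{n,k}\, \frac{\kappa_g}{n}\left( k(n-2) + \frac{(k-1)(n-2k)}{n-3}\right),
\]
and after subtracting the contribution from $\lambda h$ the first summand cancels, leaving precisely $\mu_k C_{n,k}\, \frac{(n-2k)}{(n-3)}\cdot \frac{(k-1)\kappa_g}{n}$, which matches the coefficient of $h$ in the first claimed formula. The coefficient of $(\tr_g h)\, g$ (without derivatives) in (\ref{ricci2kline}) is untouched by the subtraction and, after pulling out the factor $\mu_k C_{n,k}\, \frac{(n-2k)}{(n-3)} \kappa_g$, defines the constant $\alpha(n,k)$; an explicit computation gives $\alpha(n,k) = -\frac{(k-1)(kn-5k+n)}{n(n-2k)}$, but only its dependence on $n,k$ is needed here. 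The remaining terms $\nabla^*\nabla h$, $\nabla^2\tr_g h$, $\delta^*_g\delta_g h$, $(\Delta_g \tr_g h)g$, $(\delta_g\delta_g h)g$ and $\Rcc_g h$ carry across unchanged with their coefficients from (\ref{ricci2kline}), producing the first displayed formula.

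For the second formula, restrict to $\tr_g^{-1}(0)$, on which $\nabla^2 \tr_g h$, $(\Delta_g \tr_g h)g$ and $\alpha(n,k)\kappa_g (\tr_g h)g$ vanish identically, leaving exactly the asserted expression. For the third formula, restrict further to $\mathcal I_g = \delta_g^{-1}(0) \cap \tr_g^{-1}(0)$: the terms $\delta^*_g \delta_g h$ and $(\delta_g \delta_g h)g$ now drop out as well. What remains is $\frac12 \nabla^*\nabla h$, together with the two zero-order terms $\frac{(k-1)\kappa_g}{n}h$ and $-(n-2k-1)\Rcc_g h$, all multiplied by $\mu_k C_{n,k}$ together with appropriate factors of $k(n-2k)$ and $(n-2k)/(n-3)$. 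Factoring out $\mu_k k(n-2k) C_{n,k}$ and collecting the zero-order piece into $P_g h$ yields exactly (\ref{restrictoper}) and (\ref{restrictpot}).

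The main obstacle is simply the consistency of constants: the proposition would fail to have the clean form stated unless the coefficient of $h$ in $C^{(2k)}_g$ is proportional to $(n-2k)$, so that the final factoring through $k(n-2k)$ is possible. This clean cancellation is precisely the content of the relation (\ref{boler}) between $\lambda$ and $\kappa_g$, and it is what makes the restricted operator $C^{(2k)}_g|_{\mathcal I_g}$ a scalar multiple of the Bochner Laplacian plus a zeroth-order perturbation $P_g$, hence manifestly elliptic.
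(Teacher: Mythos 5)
Your proposal is correct and follows exactly the paper's route: the paper derives (\ref{ricci2kline}) and then states that Proposition \ref{linearizado} "follows readily," which is precisely your bookkeeping of subtracting $\lambda h$ via (\ref{boler}) and restricting to $\tr_g^{-1}(0)$ and $\mathcal I_g$. Your verification that the $k(n-2)$ part of the coefficient of $h$ cancels against $\lambda$, leaving the $(n-2k)$-proportional remainder, and your explicit value of $\alpha(n,k)$ are both consistent with the paper's formulas.
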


\begin{remark}\label{casesp}
{\rm If $g$ has constant sectional curvature $\mu$, so that its curvature tensor is $R_g=\frac{\mu}{2}g^2$, then $\kappa_g=n(n-1)\mu$, $\mu_k=\mu^{k-1}/2^{k-1}$ and $\Rcc\!\! h =\mu((\tr_gh)g- h)$, thus implying
$P_gh=\mu h$. Consequently,
\begin{equation*}
L_g^{(2k)}|_{\mathcal I_g}h= \mu k(n-2k)C_{n,k}\left( \frac{1}{2} \nabla^*\nabla h +\mu h\right),
\end{equation*}
which retrieves a result previously obtained in \cite{dLS1}.}
\end{remark}

\section{Proving the rigidity theorems}\label{demresrig}

In this section we will make use of the linearization formulae in Proposition \ref{linearizado} in order to prove the rigidity theorems stated in Section \ref{def2keinstein}.

Observe that Theorem \ref{main} is a straightforward consequence of (\ref{restrictoper}), since $\nabla^*\nabla$ is elliptic; see Remark \ref{ellipt}. To prove Theorem \ref{main2}, assume that $(X,g)\in\Hc_{n,k}$  and let $h\in \varepsilon^{(2k)}_{\langle g\rangle}$, so that from (\ref{restrictoper}) $h$ satisfies
\begin{equation}\label{satis}
\nabla^*\nabla h+2P_gh=0.
\end{equation}
Also note that the Hodge Laplacian $\Delta^\nabla$ in $S^2(X)=\Ac^1(X;\Lambda^1(X))$ admits the Weitzenb\"ock decomposition
\begin{equation}\label{weit2}
\Delta^\nabla h=\nabla^*\nabla h-\Rcc_gh+h\circ {\rm Ric}_g;
\end{equation}
see \cite{Be}.
Thus, using (\ref{satis}), (\ref{weit2}), the fact that $g$ is Einstein (by Proposition \ref{equivform}) and (\ref{restrictpot}), we obtain
\begin{eqnarray*}
0 & \leq & \|d^\nabla h\|^2+\|\delta^\nabla h\|^2\\
  & = & \left(\left(\nabla^*\nabla-\!\Rcc_ g+\frac{\kappa_g}{n}\right)h,h\right)\\
  & = & \left(\left(-2P_g-\!\Rcc_g+\frac{\kappa_g}{n}\right)h,h\right)\\
  & = & \frac{1}{k(n-3)}\left(\left(\frac{kn-5k+2}{n}\kappa_g+
         \left(2n-k-2-kn\right)\!\Rcc_g)\right)h,h\right)\\
  &\leq &   \frac{1}{k(n-3)}\left(\left(\frac{kn-5k+2}{n}\kappa_g+
         \left(2n-k-2-kn\right)\underline a_0)\right)h,h\right),
\end{eqnarray*}
where we used that $2n-k-2-kn<0$ if $k\geq 2$. Hence,
if $h\neq 0$ then it necessarily holds that
\[
\frac{kn-5k+2}{n}\kappa_g+\left(2n-k-2-kn\right)\underline a_0\geq 0,
\]
that is,
\[
\underline a_0\leq \underline\alpha_{n,k}\kappa_g.
\]
On the other hand, a similar reasoning with (\ref{weit2}) replaced by (\ref{weitzen}) gives
\begin{eqnarray*}
0 & \leq & \|S_2 h\|^2\\
  & = & \left(\left(\nabla^*\nabla+2\!\Rcc_g -2\frac{\kappa_g}{n}\right)h,h\right)\\
  & = & \left(\left(-2P_g+2\!\Rcc_g-2\frac{\kappa_g}{n}\right)h,h\right)\\
  & = & \frac{2}{k(n-3)}\left(\left(-\frac{kn-2k-1}{n}\kappa_g+
         \left(kn-5k+n-1\right)\!\Rcc_g)\right)h,h\right)\\
  &\leq &   \frac{2}{k(n-3)}\left(\left(-\frac{kn-2k-1}{n}\kappa_g+
         \left(kn-5k+n-1\right)\overline a_0)\right)h,h\right),
\end{eqnarray*}
that is, $h\neq 0$ necessarily leads to
\[
\overline a_0\geq \overline \alpha_{n,k}\kappa_g.
\]
This completes the proof of Theorem \ref{main2}.

We now proceed to the proof of Theorem \ref{main4}, from which Theorem \ref{main3} follows, as already explained. Observe initially that the fact that the Lovelock tensor $\Jc_g^{(2k)}$ is divergence free, for {\em any} metric $g$, can be expressed as
\begin{equation}\label{free}
\delta_g\Rc_g^{(2k)}+(2k-1)!d\Sc_g^{(2k)}=0.
\end{equation}
If we introduce the functional $\mathcal G:\Mc_1(X)\to \Cc^1(X)$,
\[
\mathcal G(g)=\Rc_g^{(2k)}-\frac{(2k)!}{n}\Ac^{(2k)}(g)g,
\]
and the $(2k)$-{\em Bianchi} {\em operator} $\beta^{(2k)}_g:\Cc^1(X)\to \Dc^1(X)$,
\[
\beta^{(2k)}_g=\delta_g+\frac{1}{2k}d{\mathsf {tr}}_g,
\]
the next proposition follows immediately.

\begin{proposition}\label{dr}
The following properties hold:
\begin{enumerate}
 \item $g$ is $2k$-Einstein if and only if $\mathcal G(g)=0$;
 \item If $g$ is $2k$-Einstein then $\dot{\mathcal G}_g=C_g^{(2k)}$ in $\Cc^1(X)$.
 \item For every $g$, $\beta^{(2k)}_g\mathcal G_g=0$. In particular, if $g$ is $2k$-Einstein,
 $\beta^{(2k)}_g\dot{\mathcal G}_g=0$.
\end{enumerate}
\end{proposition}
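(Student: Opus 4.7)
The plan is to dispatch the three items in the order (1), (3), (2), since (1) is a direct trace computation, (3) rests on the Bianchi-type identity already available, and (2) requires the most care. First, for item (1), I would apply $c_g$ to the $2k$-Einstein identity $\Rc^{(2k)}_g = \lambda g$; using (\ref{cont}) this yields $\lambda = (2k)!\Sc^{(2k)}_g/n$, and Proposition \ref{constante} forces this common value to be constant. Matching against the defining expression for $\mathcal G$, and reading $\Ac^{(2k)}(g)$ as the global scalar invariant of $g$ that agrees with $\Sc^{(2k)}_g$ when the latter is constant, gives $\mathcal G(g) = 0$. Conversely, $\mathcal G(g) = 0$ directly exhibits $\Rc^{(2k)}_g$ as a constant multiple of $g$, i.e.\ the $2k$-Einstein condition.

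For item (3), I would rewrite the divergence-free condition $\delta_g\Jc^{(2k)}_g=0$ (Proposition \ref{bianchiger}) using (\ref{locve}) as the Bianchi-type identity
\[
\delta_g\Rc^{(2k)}_g+(2k-1)!\,d\Sc^{(2k)}_g=0.
\]
Combined with the trace formula $\mathrm{tr}_g\Rc^{(2k)}_g=(2k)!\Sc^{(2k)}_g$ from (\ref{cont}), this yields $\beta^{(2k)}_g\Rc^{(2k)}_g=0$ by direct cancellation of the two scalar-gradient contributions. The correction term $\frac{(2k)!}{n}\Ac^{(2k)}(g)\,g$ makes no contribution to $\beta^{(2k)}_g\mathcal G(g)$: since $\Ac^{(2k)}(g)$ is a global scalar (constant on $X$), both $\delta_g(cg)=-dc$ and $d\,\mathrm{tr}_g(cg)=n\,dc$ vanish. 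The ``in particular'' assertion follows by differentiating $\beta^{(2k)}_g\mathcal G(g)=0$ at a $2k$-Einstein $g$: since $\mathcal G(g)=0$ by item (1), only the term $\beta^{(2k)}_g\dot{\mathcal G}_gh$ survives.

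For item (2), I would expand via the product rule,
\[
\dot{\mathcal G}_gh=\dot{\Rc}^{(2k)}_gh-\frac{(2k)!}{n}(\dot{\Ac}^{(2k)}_gh)\,g-\frac{(2k)!}{n}\Ac^{(2k)}(g)\,h.
\]
At a $2k$-Einstein $g$, item (1) gives $(2k)!\Ac^{(2k)}(g)/n=\lambda$, so the last term equals $\lambda h$. The identity $\dot{\mathcal G}_g=C^{(2k)}_g$ in $\Cc^1(X)$ then reduces to showing that the middle term vanishes on the relevant domain $T_g\Mc_1(X)=\{h\in S^2(X):(g,h)=0\}$. I would compute $\dot{\Ac}^{(2k)}_gh$ via the Liouville formula (\ref{liouville}) together with Proposition \ref{grad} (${\rm grad}\,\Fc^{(2k)}_g=-\Jc^{(2k)}_g$), and exploit the fact that at a $2k$-Einstein $g$ the Lovelock tensor $\Jc^{(2k)}_g$ is itself a constant multiple of $g$; then $(\Jc^{(2k)}_g,h)$ is a multiple of $(g,h)=0$, forcing $\dot{\Ac}^{(2k)}_gh=0$ and concluding $\dot{\mathcal G}_gh=C^{(2k)}_gh$.

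The hard part will be this cancellation in item (2): verifying $\dot{\Ac}^{(2k)}_gh=0$ on $T_g\Mc_1(X)$ requires unpacking Proposition \ref{grad} in conjunction with the Liouville formula and using the specific form of $\Jc^{(2k)}_g$ at a $2k$-Einstein metric, with careful attention to the normalization hidden in $\Ac^{(2k)}(g)$. Once this is secured, items (1) and (3) amount to direct manipulations of the trace formula and the Bianchi-type identity, respectively.
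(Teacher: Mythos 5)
Your proof is correct and supplies exactly the computations the paper leaves implicit: the paper gives no argument at all, asserting that the proposition ``follows immediately'' from the definitions of $\mathcal G$ and $\beta^{(2k)}_g$ together with Proposition \ref{bianchiger}. In particular, your reading of $\Ac^{(2k)}(g)$ as a global scalar (constant on $X$) and your restriction of item (2) to $T_g\Mc_1(X)$ are the correct --- indeed the only workable --- interpretations, since otherwise items (3) and (2) would fail for general $g$ and general $h$ respectively.
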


This proposition gives, in particular, the identification $E^{(2k)}_1(X)={\mathcal G}^{-1}(0)$. In this context, the
identity $\beta^{(2k)}_g\dot{\mathcal G}_g=0$ means that $\dot{\mathcal G_g}$ is not surjective, since every $u\in\im\,  \dot{\mathcal G_g}$ belongs to the kernel of the first order operator $\beta^{(2k)}_g$, which of course reflects the diffeomorphism invariance of the $2k$-Einstein condition. Obviously, this is a serious complication when trying to use the Implicit Function Theorem to probe the local structure of $\mathfrak E^{(2k)}_g(X)$.
A way out  is to use  Proposition \ref{linearizado} and consider, for
$h\in {\rm tr}_g^{-1}(0)$,
the operator
\begin{eqnarray*}
\tilde C_g^{(2k)}h & = & C_g^{(2k)}h+\mu C_{n,k}k(n-2k)\delta^{*}_g\delta_g h - \mu C_{n,k}k(k-1)(\delta_g \delta_g h)g \\
&=&
\mu k(n-2k)C_{n,k}\bigg\{ \frac12 \nabla^*\nabla h +P_gh\bigg\}.
\end{eqnarray*}

\begin{lemma}\label{invar}
$\tilde C_g^{(2k)}$ leaves invariant the subspace
\[
T_g\Mc_1(X)=\left\{h\in\Cc^1(X);\in \int_X{\rm tr}_g h\nu_g=0\right\}.
\]
In particular, $\tilde C_g^{(2k)}(T_g\Mc_1(X))$ is closed.
\end{lemma}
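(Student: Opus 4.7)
The plan is to verify directly that $\mathrm{tr}_g(\tilde C_g^{(2k)}h)$ has vanishing integral whenever $\int_X \mathrm{tr}_g h\,\nu_g = 0$, and then deduce the closedness assertion from ellipticity plus standard Fredholm theory.

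For the invariance step, I would use the very definition
\[
\tilde C_g^{(2k)}h = C_g^{(2k)}h + \mu C_{n,k}k(n-2k)\delta^*_g\delta_g h - \mu C_{n,k}k(k-1)(\delta_g\delta_g h)g
\]
and apply $\mathrm{tr}_g$ term by term. A short calculation gives $\mathrm{tr}_g(\delta^*_g\omega) = -\delta_g\omega$ for any $\omega\in\Ac^1(X)$, so the two correction terms contribute a constant multiple of $\delta_g\delta_g h$; being an iterated divergence, it integrates to zero on the closed manifold $X$. It therefore suffices to show $\int_X \mathrm{tr}_g(C_g^{(2k)}h)\,\nu_g=0$. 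To identify this trace, I differentiate the identity $\mathrm{tr}_g\Rc^{(2k)}_g=(2k)!\Sc^{(2k)}_g$ of (\ref{cont}) along $g+th$; using the standard variation $\frac{d}{dt}|_{0}g^{ij} = -h^{ij}$ yields
\[
-\langle h,\Rc^{(2k)}_g\rangle + \mathrm{tr}_g(\dot\Rc^{(2k)}_g h) = (2k)!\,\dot\Sc^{(2k)}_g h.
\]
Since $g$ is $2k$-Einstein, $\langle h,\Rc^{(2k)}_g\rangle = \lambda\,\mathrm{tr}_g h$, and subtracting $\lambda\,\mathrm{tr}_g h$ (recall $C_g^{(2k)}=\dot\Rc^{(2k)}_g-\lambda$) gives the clean identity $\mathrm{tr}_g(C_g^{(2k)}h) = (2k)!\,\dot\Sc^{(2k)}_g h$.

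To conclude the invariance, I invoke Proposition~\ref{equivalenteinstein}(iii): $g$ is a critical point of $\Fc^{(2k)}$ restricted to $\Mc_1(X)$. Expanding $\dot\Fc^{(2k)}_g h$ via Liouville's formula (\ref{liouville}) gives
\[
0 = \dot\Fc^{(2k)}_g h = \int_X \dot\Sc^{(2k)}_g h\,\nu_g + \tfrac12\int_X \Sc^{(2k)}_g\,\mathrm{tr}_g h\,\nu_g \qquad \forall\,h\in T_g\Mc_1(X).
\]
Since $\Sc^{(2k)}_g$ is constant by Proposition~\ref{constante}, the second integral vanishes under the hypothesis $\int_X \mathrm{tr}_g h\,\nu_g=0$, and therefore $\int_X \dot\Sc^{(2k)}_g h\,\nu_g=0$, proving $\tilde C_g^{(2k)}(T_g\Mc_1(X))\subset T_g\Mc_1(X)$.

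For the closedness, I would compute the principal symbol of $\tilde C_g^{(2k)}$ on all of $\Cc^1(X)$: its second-order part reduces to $k\bigl\{(n-2k)\tfrac12(\nabla^*\nabla h - \nabla^2\mathrm{tr}_g h) + (k-1)(\Delta_g\mathrm{tr}_g h)g\bigr\}$, and at $\xi\neq 0$ its kernel forces first (via tracing) $\mathrm{tr}_g h=0$ with coefficient $-\tfrac{2n(k-1)}{n-2k}\neq 0$, and then $h=0$. Hence $\tilde C_g^{(2k)}$ is elliptic on the closed manifold $X$, and so Fredholm between the natural Sobolev completions. A standard argument (the map $\Cc^1(X)/\ker\tilde C_g^{(2k)}\to \mathrm{im}\,\tilde C_g^{(2k)}$ is a topological isomorphism by the open mapping theorem, and $T_g\Mc_1(X)+\ker\tilde C_g^{(2k)}$ is closed because $\ker\tilde C_g^{(2k)}$ is finite dimensional) then transfers closedness of the image to the restriction on $T_g\Mc_1(X)$. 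The main obstacle I anticipate is the careful sign bookkeeping (for $\mathrm{tr}_g\,\delta^*_g$ and for the variation of contractions) and checking that the full symbol on $\Cc^1(X)$—not merely its simpler restriction to $\mathrm{tr}_g^{-1}(0)$ appearing in (\ref{restrictoper})—is genuinely elliptic; once those are in hand, both assertions follow as above.
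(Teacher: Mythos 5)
Your proof is correct, but it follows a genuinely different route from the paper's. For the invariance, the paper simply traces the explicit formula $\tilde C_g^{(2k)}h=\mu k(n-2k)C_{n,k}\{\tfrac12\nabla^*\nabla h+P_gh\}$ term by term, using ${\rm tr}_g\nabla^*\nabla h=\Delta_g{\rm tr}_gh$ and ${\rm tr}_g\!\Rcc_g\! h=\langle\Rc^{(2)}_g,h\rangle$, and then integrates; your argument instead derives the identity ${\rm tr}_g(C^{(2k)}_gh)=(2k)!\,\dot\Sc^{(2k)}_gh$ by linearizing (\ref{cont}) and closes the loop with the variational characterization of Proposition \ref{equivalenteinstein}. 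This is more conceptual and buys something real: your invariance step uses only the $2k$-Einstein condition, not the curvature hypothesis (\ref{condcurva}), whereas the paper's computation is tied to the class $\Hc_{n,k}$ (of course, the ellipticity needed for closedness still requires that hypothesis). One caveat you should be aware of: the paper defines $\tilde C_g^{(2k)}$ by a two-line display whose two expressions agree only on ${\rm tr}_g^{-1}(0)$, so on $T_g\Mc_1(X)$ they are different operators; you extend by the first line (consistent with the later computation of $\beta^{(2k)}_g\tilde C_g^{(2k)}$), while the paper's own proof of this lemma implicitly extends by the second line $\tfrac12\nabla^*\nabla+P_g$ (for which ellipticity is immediate). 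Both extensions satisfy the lemma, and your extra work — checking that the full second-order symbol on all of $\Cc^1(X)$, not just its restriction to $\mathcal I_g$ in (\ref{restrictoper}), is injective for $k\geq 2$ — is exactly what your choice of extension requires; the precise nonzero constant you quote for the trace part is not the one I obtain, but only its nonvanishing matters and that holds for $k\geq 2$. The final deduction of closedness of the image of a closed finite-codimension subspace under an elliptic (hence Fredholm) operator via the open mapping theorem is standard and correct.
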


\begin{proof}
It is easy to see that ${\rm tr}_g\!\Rcc_g\!\! h=\langle \Rc^{(2)}_g,h\rangle$, so that the Einstein condition implies
\[
{\rm tr}_g P_gh=
 \beta (n,k)\kappa_g{\rm tr}_g h, \quad \beta(n,k)=\frac{n-k}{nk(n-3)}.
\]
Since
${\rm tr}_g\nabla^*\nabla h=\Delta_g{\rm tr}_gh$, it follows that
\[
{\rm tr}_g\tilde C_g^{(2k)}h=(n-2k)kC_{n,k}\mu\left(\frac{1}{2}\Delta_g{\rm tr}_gh
+\beta (n,k)\kappa_g{\rm tr}_g h \right),
\]
from which
\[
\int_X{\rm tr}_g\tilde C_g^{(2k)}h\,\nu_g=(n-2k)kC_{n,k}\beta (n,k)\mu\kappa_g\int_X{\rm tr}_gh\,\nu_g,
\]
which proves the invariance of $T_g\Mc_1(X)$. The ellipticity of $\tilde C_g^{(2k)}$ then implies that $\tilde C_g^{(2k)}(T_g\Mc_1(X))$ is closed.
\end{proof}

We now verify the constraints imposed on $\tilde C _g^{(2k)}$ by the diffeomorphism invariance of the $2k$-Einstein condition. Using  Proposition  \ref{dr} and the identity ${\rm tr}_g\delta_g^*\eta=-\delta_g\eta$, $\eta\in\Ac^1(X)$, we get
\begin{eqnarray*}
\beta^{(2k)}_g\tilde C_g^{(2k)}h & = & \mu C_{n,k}k(n-2k)\beta^{(2k)}_g\delta^{*}_g\delta_g h - \mu C_{n,k}k(k-1)\beta^{(2k)}_g[(\delta_g (\delta_g h))g] \\
&=& C_{n,k}k(n-2k)\mu\Big\{\delta_g(\delta^{*}_g\delta_g h)+\frac{1}{2k}d{\rm tr}_g(\delta^{*}_g\delta_g h)\Big\}-\\
 &&\quad -  C_{n,k}k(k-1)\mu\Big\{\delta_g[(\delta_g (\delta_g h))g]+\frac{1}{2k}d{\rm tr}_g[(\delta_g (\delta_g h))g]\Big\} \\
&=&C_{n,k}k(n-2k)\mu\Big\{\delta_g(\delta^{*}_g\delta_g h)-\frac12d(\delta_g (\delta_g h))\Big\},
\end{eqnarray*}
so that setting $G_g=\delta_g\delta_g^*-\frac12d\delta_g$ it follows that
\begin{equation}\label{pup}
\beta^{(2k)}_g\tilde C_g^{(2k)}h=C_{n,k}k(n-2k)\mu G_g (\delta_g h),
\end{equation}
with $G_g$ being elliptic.

Now, (\ref{pup}) initially gives $\tilde{C}_g^{(2k)}(T_g\mathcal V_g)\subset {\ker}\beta^{(2k)}_g$.
Moreover, if $k=\tilde{C}_g^{(2k)} h\in \ker\beta^{(2k)}_g$, $h\in T_g{\mathcal M}_1(X)$, then $\delta_gh\in\ker G$, a space  of {\em finite} dimension, and this gives
\[
\tilde{C}_g^{(2k)}(T_g\mathcal{V}_g)\subset\tilde{C}_g^{(2k)}(T_g{\mathcal M}_1(X)\cap\ker \beta^{(2k)}_g)\subset
\tilde{C}_g^{(2k)}\left(T_g{\mathcal M}_1(X)\cap\delta_g^{-1}\ker G\right).
\]
As $T_g\mathcal{V}_g$ is closed and has {\em finite} dimension in $T_g{\mathcal M}_1(X)\cap\delta_g^{-1}\ker G$, it is easy to verify that
$\tilde{C}_g^{(2k)}(T_g\mathcal{V}_g)$
is closed in ${C}_g^{(2k)}\left(T_g{\mathcal M}_1(X)\cap\delta_g^{-1}\ker G\right)$. Thus,  $\tilde{C}_g^{(2k)}(T_g{\mathcal M}_1(X))$ is closed in $\tilde{C}_g^{(2k)}(T_g{\mathcal M}_1(X)\cap\ker \beta^{(2k)}_g)$, which is closed in ${\mathcal C}^1(X)$.
We conclude that, although $C_g^{(2k)}=\dot {\mathcal G}_g:T_g\mathcal{V}_g\to {\mathcal C}^1(X)$ is not surjective, its range is closed .
Hence, if $\pi$ is the orthogonal projection of ${\mathcal C}^1(X)$ onto ${ C}_g^{(2k)}(T_g\mathcal{V}_g)$,
the composition $\pi\circ {\mathcal G}:\mathcal{V}_g\to {C}_g(T_g\mathcal{V}_g)$, which is analytic, is a submersion in $g$. Thus, $(\pi\circ {\mathcal G})^{-1}(0)$ is a real analytical manifold in a neighborhood of $g$ having $\mathfrak{E}_{g}^{(2k)}$ as its tangent space in $g$. In this manifold, the mapping ${\mathcal G}$ is analytical so  that the pre-moduli space ${\mathfrak E}^{2k}_g(X)={\mathcal G}^{-1}(0)$ is an analytic subset. This completes the proof of Theorem \ref{main4} and, therefore, of Theorem \ref{main3}.

\section{The Yamabe problem for Gauss-Bonnet curvatures}
\label{yamabegb}

In this section we consider a generalization of the classical Yamabe problem, namely, the Yamabe problem for the Gauss-Bonnet curvature $\Sc^{(2k)}$. As explained below, in the class of locally conformally flat manifolds this problem is equivalent to the so-called $\sigma_k$-Yamabe problem and  has already been considered under a certain ellipticity assumption on the { background}  metric (see \cite{GW}, \cite{LL}). As a consequence of a formula for the linearization  of the Gauss-Bonnet curvature on $(X,g)\in\Hc_{n,k}$ (see Proposition \ref{linescar} below) we shall prove a local version of the  Yamabe problem for the Gauss-Bonnet curvatures in a neighborhood of a subclass of $\Hc_{n,k}$ which includes {\em all} nonflat space forms, except for the round sphere. This gives, in particular, many  examples of {\em background} metrics with {\em non-null} Weyl tensor for which this Yamabe type problem is affirmatively solved.

%\section{The Yamabe problem and its generalizations}
%\label{probyamabes}

The classical Yamabe problem asks for the existence of a metric with constant scalar curvature  in each conformal class of metrics in a closed Riemannian manifold of dimension $n\geq 3$; see \cite{LP}.
As it is evident from Proposition \ref{characgbconst}, this problem is just the first in a series of variational problems in Conformal Geometry. More precisely, it is also natural to consider the following problem:

\vspace{0.4cm}
\noindent
{\bf Yamabe Problem for Gauss-Bonnet curvatures:} {\em given $n\geq 4$, $1\leq k\leq n/2$ and a Riemannian manifold $(X^n,g)$, does there exist $g'\in [g]$, such that $\Sc^{(2k)}_{g'}$ is constant?}

\vspace{0.4cm}

Clearly, for $k=1$ this reduces to the classical Yamabe problem. On the other hand, the above general problem is related to another problem of Yamabe type extensively studied recently. To see this, recall the following decomposition  for the curvature tensor:
\begin{equation}\label{schout2}
R_g=A_g\odot g+W_g,
\end{equation}
where $W_g$ is the Weyl tensor and
\begin{equation}\label{schouten}
A_g=\frac{1}{n-2}\left({\rm Ric}_g-\frac{\kappa_g}{2(n-1)}\right)g,
\end{equation}
is the {\em Schouten tensor}. Since $W_g$ is a conformal invariant, all the information regarding conformal changes of metrics is encoded in $A_g$. Thus, if $\sigma_k(A_g)$ denotes the $k^{\rm th}$ elementary symmetric function of the eigenvalues of $A_g$ (understood as an element of $\Tc^{(1,1)}(X)$) the following problem becomes rather natural:

\vspace{0.4cm}
\noindent
{\bf $\sigma_k$-Yamabe Problem:}
{\em given $n\geq 4$, $1\leq k\leq n/2$ and a Riemannian manifold $(X^n,g)$, does there exist $g'\in [g]$, such that $\sigma_k(A_{g'})$ is constant?}

\vspace{0.4cm}

Notice that since $\sigma_1(A_g)$ is a multiple of $\kappa_g$, this reduces to the classical Yamabe problem for $k=1$; see
\cite{V} for a nice introduction to the $\sigma_k$-Yamabe problem.

Actually, the Yamabe type problems above are completely equivalent in the class of conformally flat manifolds. This follows from the proposition below, proved in \cite{L3}.

\begin{proposition}\label{doisyam}
If $n\geq 4$, $1\leq k\leq n/2$ and $(X^n,g)$ is a Riemannian manifold, then
\begin{equation}\label{gbsigk}
\Sc^{(2k)}_g=\frac{(n-k)!k!}{(n-2k)!}\sigma_k(A_g)+\sum_{i=0}^{k-1}\frac{k!}
{i!(k-i)!(n-2k)!}\langle\star g^{n-2k+i}P_g^i,W^{k-i}_g\rangle,
\end{equation}
where $\star$ is the natural extension of the Hodge star operator acting on $\Ac^{\bullet,\bullet}(X)$. In particular, if $(X,g)$ is locally  conformally flat ($W_g=0$) then
\begin{equation}\label{gbsigk2}
\Sc^{(2k)}_g=\frac{(n-k)!k!}{(n-2k)!}\sigma_k(A_g).
\end{equation}
\end{proposition}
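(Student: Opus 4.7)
The plan is to derive (\ref{gbsigk}) by applying the full contraction $\tfrac{1}{(2k)!}c_g^{2k}$ to the $k$-th power of the decomposition (\ref{schout2}). Reading the Kulkarni--Nomizu product $A_g\odot g$ as the product $g\cdot A_g\in\Cc^2(X)$ in the double form algebra, and noting that $g$, $A_g$, $W_g$ all have even total bidegree and therefore commute in $\Ac^{\bullet,\bullet}(X)$, the binomial expansion
\begin{equation*}
R_g^k=\sum_{i=0}^{k}\binom{k}{i}\,g^i\,A_g^i\,W_g^{k-i}
\end{equation*}
is valid. Applying $\tfrac{1}{(2k)!}c_g^{2k}$ separates $\Sc_g^{(2k)}$ into the pure Schouten contribution ($i=k$) and the Weyl correction terms ($i<k$).

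For the $i=k$ term, the goal is to evaluate $c_g^{2k}(g^k A_g^k)$. I would iterate the commutation rule (\ref{commutation}) with $\eta=A_g^j$ of bidegree $(j,j)$, and invoke the basic identity
\begin{equation*}
c_g^{j}A_g^{j}=(j!)^2\,\sigma_j(A_g),
\end{equation*}
which one checks by diagonalizing $A_g$ in a local orthonormal frame of eigenvectors and observing that $A_g^j=j!\sum_{i_1<\dots<i_j}\lambda_{i_1}\cdots\lambda_{i_j}\,(e_{i_1}\wedge\cdots\wedge e_{i_j})\otimes(e_{i_1}\wedge\cdots\wedge e_{i_j})$. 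The combinatorial bookkeeping, carried out inductively in the number of remaining $g$-factors, yields
\begin{equation*}
\frac{1}{(2k)!}\,c_g^{2k}(g^k A_g^k)=\frac{(n-k)!\,k!}{(n-2k)!}\,\sigma_k(A_g).
\end{equation*}
A quick sanity check in the case $k=1$ recovers $(n-1)\sigma_1(A_g)=\kappa_g/2=\Sc_g^{(2)}$, consistent with (\ref{schouten}).

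For the mixed terms $0\le i<k$, the plan is to exploit the duality between iterated $g$-contraction and the Hodge star on double forms. Specifically, I would invoke the pointwise pairing identity $\langle\alpha,\beta\rangle\nu_g=\alpha\wedge\star\beta$ together with the formulae expressing $\tfrac{1}{m!}c_g^m(g^m\omega)$ as a $\star$-pairing in the appropriate bidegree, in order to recognize
\begin{equation*}
\frac{1}{(2k)!}\,\binom{k}{i}\,c_g^{2k}\bigl(g^i A_g^i\,W_g^{k-i}\bigr)=\frac{k!}{i!(k-i)!(n-2k)!}\,\bigl\langle\star\,g^{n-2k+i}A_g^i,\,W_g^{k-i}\bigr\rangle.
\end{equation*}
Summing over $i=0,\dots,k-1$ and adding the $i=k$ contribution computed above gives (\ref{gbsigk}). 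Finally (\ref{gbsigk2}) is immediate, since $W_g=0$ kills every summand with $k-i\ge 1$, leaving only the pure $\sigma_k$ term. The main obstacle will be pinning down the precise combinatorial factor in the Hodge-star/contraction identity: although its structure is dictated by repeated application of (\ref{commutation}), collecting the products $\prod_{p=0}^{q-1}(n-r-s+l-m-p)$ into the clean closed form displayed above requires careful indexing, and I would appeal to the general algebraic machinery developed in \cite{L2} to organize this step.
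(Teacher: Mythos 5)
Your overall route is sound, and it is worth noting at the outset that the paper does not actually prove Proposition \ref{doisyam} at all --- it simply cites Labbi \cite{L3}. What you have written is essentially a reconstruction of Labbi's argument: expand $R_g^k=(gA_g+W_g)^k$ binomially in the double-form algebra (legitimate, since double forms of bidegree $(p,p)$ always commute, the sign in the graded commutation rule being $(-1)^{pp'+qq'}$) and apply the full contraction $\tfrac{1}{(2k)!}c_g^{2k}$. Your treatment of the $i=k$ term is correct, and in fact closes up more cleanly than you suggest: applying (\ref{commutation}) with $l=2k$, $m=k$, $\eta=A_g^k$ of bidegree $(k,k)$, every term with $q<k$ dies because $c_g^{\,j}A_g^k=0$ for $j>k$, and the surviving $q=k$ term gives
\begin{equation*}
c_g^{2k}(g^kA_g^k)=k!\,\binom{2k}{k}\frac{(n-k)!}{(n-2k)!}\,c_g^kA_g^k
=(2k)!\,\frac{(n-k)!}{(n-2k)!}\,(k!)\,\sigma_k(A_g),
\end{equation*}
using your (correct) identity $c_g^kA_g^k=(k!)^2\sigma_k(A_g)$; this reproduces the coefficient in (\ref{gbsigk}) exactly, and your $k=1$ sanity check is consistent. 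The one genuine incompleteness --- which you acknowledge --- is the mixed-term identity: you assert the closed form $\frac{1}{(2k)!}\binom{k}{i}c_g^{2k}(g^iA_g^iW_g^{k-i})=\frac{k!}{i!(k-i)!(n-2k)!}\langle\star g^{n-2k+i}A_g^i,W_g^{k-i}\rangle$ without deriving the factor $1/(n-2k)!$. Note that one cannot get there by adjointness of $c_g$ and $g$ alone (that only peels off the $g^i$ factor, yielding $\langle A_g^iW_g^{k-i},c_g^i g^{2k}\rangle$ up to an explicit constant); converting the remaining pairing against $A_g^i$ into the displayed $\star$-pairing requires the identity $\langle\omega_1\cdot\omega_2,\omega_3\rangle=\langle\omega_2,\star(\omega_1\cdot\star\omega_3)\rangle$ together with $\star(g^p/p!)=g^{n-p}/(n-p)!$, i.e.\ precisely the machinery of \cite{L2}, \cite{L3} that you defer to. This is an acceptable deferral for the full formula (\ref{gbsigk}), and it is immaterial for the conformally flat corollary (\ref{gbsigk2}) --- the only part the paper actually uses --- since each mixed term manifestly carries a positive power of $W_g$ regardless of its precise coefficient.
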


The $\sigma_k$-Yamabe problem for conformally flat manifolds (or, equivalently, the Yamabe problem for the Gauss-Bonnet curvatures) were considered in \cite{GW} and \cite{LL}, assuming that the {\em background} metric satisfies a certain ellipticity condition. The next theorem  solves the Yamabe problem for the Gauss-Bonnet curvatures in a neighborhood of Riemannian manifolds in the class $\Hc_{n,k}$, except for the round spheres, and provides many new examples of {\em non}-conformally flat manifolds for which this problem is affirmatively solved.

\begin{definition}\label{subclasse}
Given $n\geq 4$ and $1\leq k<n/2$, let $\Hc_{n,k}'$ be the complement of the set of round spheres in $\Hc_{n,k}$.
\end{definition}
Thus,
$(X,g)\in \Hc'_{n,k}$ if and only if $g$ is $2k$-Einstein and satisfies
\[
R_g^{k-1}=\mu_k g^{2k-2},\quad \mu_k\neq 0,
\]
with $(X,g)$
being isometrically distinct from a round sphere. Observe that in this case it follows from (\ref{2k-scalar curv}) that
\[
\Sc^{(2k)}_g=\frac{(n-2)!}{(n-2k)!2}\mu_k\kappa_g,
\]
and since $g$ is Einstein by Proposition \ref{equivform} we see that the $2k$-Gauss-Bonnet curvature of $(X,g)$ is constant.
Moreover, if $(X,\gm)\in\Hc'_{n,k}$, $\Dc^+(X)$ denotes the set of positive smooth functions in $X$ and $1:X\to\mathbb{R}$ is the function identically equal to
$1$.

\begin{theorem}\label{main5}
Assume that $4\leq 2k<n$ and let $(X,g_0)\in\Hc'_{n,k}$ with ${\rm vol}(X,g_0)=\nu$. Then the space $\Mc^{(2k)}_{\nu}(X)$ of the metrics in $X$ with  {\em constant} $2k$-Gauss-Bonnet curvature and volume $\nu$ has, in a neighborhood of $g_0$, the structure of an ILH-submanifold (of infinite dimension) of $\Mc(X)$. Moreover, the map  $\xi:\Dc^+(X)\times\Mc_{\nu}^{(2k)}(X)\to\Mc(X)$, given by $\xi(f,g)=fg$, is ILH-smooth in a neighborhood of $(1,g_0)$ and its derivative in $(1,g_0)$ is an isomorphism. In particular, there exists a neighborhood $U$ of $g_0$ in $\Mc(X)$ such that any metric in $U$ is conformal to some metric whose $2k$-Gauss-Bonnet curvature is constant.
\end{theorem}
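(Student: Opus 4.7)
The plan is to realize $\Mc^{(2k)}_\nu(X)$ as an ILH-submanifold of $\Mc(X)$ near $g_0$ and then apply the ILH inverse function theorem of Omori--Ebin--Hamilton to conclude that $\xi$ is a local diffeomorphism near $(1,g_0)$. The fundamental input is the linearization of $\Sc^{(2k)}$ on $(X,g_0)\in \Hc_{n,k}$, whose derivation in Section \ref{lin} parallels that of Proposition \ref{linearizado} for the $2k$-Ricci tensor.

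First I would specialize this linearization formula to a pure conformal direction $h=\varphi g_0$. Since $g_0$ is Einstein by Proposition \ref{equivform}, the zeroth-order terms involving $\Rcc_{g_0}$ and $\Rc^{(2)}_{g_0}$ reduce to explicit multiples of the identity, yielding an identity of the form
\[
\dot\Sc^{(2k)}_{g_0}(\varphi g_0) = A\,\Delta_{g_0}\varphi + B\,\varphi
\]
for explicit universal constants $A,B$ depending on $n,k,\mu_k$ and $\kappa_{g_0}$, with $A\ne 0$. Tracking coefficients carefully through the contraction identities of Proposition \ref{mainlema} and Corollary \ref{maincorollary} should pin down the ratio $-B/A$, and the expectation (guided by the classical $k=1$ case and the Einstein structure of $g_0$) is that it coincides with the Lichnerowicz threshold $\kappa_{g_0}/(n-1)$.

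With this formula in hand, the ILH-submanifold structure on $\Mc^{(2k)}_\nu(X)$ would follow by applying the implicit function theorem to the map $\Phi(g) = \Sc^{(2k)}_g - \overline{\Sc^{(2k)}_g}$ defined on the slice of metrics of volume $\nu$, whose zero locus is $\Mc^{(2k)}_\nu(X)$. Using the Berger--Ebin decomposition (\ref{be}) to split tangent vectors into divergence-free and gauge parts, one reduces surjectivity of $\dot\Phi_{g_0}$ onto mean-zero functions to surjectivity of $A\Delta_{g_0}+B$ modulo constants, which holds by elliptic theory once the kernel on mean-zero functions is trivial---precisely the Obata--Lichnerowicz condition verified below.

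The decisive step is showing that $D\xi_{(1,g_0)}(\varphi, h) = \varphi g_0 + h$ is an ILH-isomorphism from $\Dc(X)\oplus T_{g_0}\Mc^{(2k)}_\nu(X)$ onto $S^2(X)$. For injectivity, if $h = -\varphi g_0$ lies in $T_{g_0}\Mc^{(2k)}_\nu(X)$, then $(A\Delta_{g_0}+B)\varphi$ is constant and $\int_X\varphi\,\nu_{g_0}=0$; existence of such a nonzero $\varphi$ requires $-B/A$ to be a nonzero eigenvalue of $\Delta_{g_0}$, which by the Einstein structure of $g_0$ and the classical Obata rigidity theorem would force $(X,g_0)$ to be a round sphere, contradicting $(X,g_0)\in \Hc'_{n,k}$. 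Surjectivity then follows by a standard splitting argument: given $k\in S^2(X)$, solve $(A\Delta_{g_0}+B)\varphi = \dot\Sc^{(2k)}_{g_0}(k) - c$ for a suitable constant $c$ compatible with the volume constraint, and set $h = k-\varphi g_0$. The ILH inverse function theorem then produces a neighborhood of $(1,g_0)$ on which $\xi$ is a diffeomorphism, which is exactly the local Yamabe statement. The principal difficulty is the explicit algebraic verification that $-B/A$ matches the Obata--Lichnerowicz threshold---an identity that is natural but requires careful bookkeeping through the linearization computation of Section \ref{lin}; once this coincidence is established, the Obata rigidity argument and the Omori--Ebin--Hamilton machinery do the rest.
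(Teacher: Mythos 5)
Your proposal is correct and follows essentially the same route as the paper: the coincidence you flag as the principal difficulty, namely that $\dot\Sc^{(2k)}_{g_0}(fg_0)$ is a nonzero multiple of $\bigl(\Delta_{g_0}-\kappa_{g_0}/(n-1)\bigr)f$, is exactly Corollary \ref{cro}, and the Lichnerowicz--Obata input ruling out the threshold eigenvalue off the round sphere is Proposition \ref{specbotton}. The only cosmetic differences are that the paper (following Koiso) works with the fourth-order map $g\mapsto\Delta_g\Sc^{(2k)}_g$ minus its mean instead of your second-order $\Phi$, and proves surjectivity of $d\xi_{(1,g_0)}$ by computing the $L^2$-adjoint and arguing by orthogonality rather than by your direct solvability argument; neither change affects the substance.
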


For the ILH terminology we refer to \cite{O}. We also mention that the proof of this theorem is inspired on an argument due to N. Koiso \cite{K}, where a similar result has been proved in the case $k=1$ for a class of manifolds containing $\Hc'_{n,k}$.

\begin{remark}\label{fullyeq}{\rm
The local Yamabe type result in Theorem \ref{main5} does not hold true in case  $g_0$ is the round metric on the sphere. Indeed, if we pull-back $g_0$ using the flow of a conformal vector field we get a one-parameter family of metrics with the same Gauss-Bonnet curvature and volume.}
\end{remark}

\section{Linearizing the Gauss-Bonnet curvature}
\label{lin}

The main ingredient in the proof of Theorem \ref{main5}
is a formula for the linearization of the $2k$-Gauss-Bonnet curvature at Riemannian manifolds in the class $\Hc_{n,k}$; see  Proposition \ref{linescar} below. We start by observing that, by Definition \ref{rew},
\[
\Sc^{(2k)}_g=\frac{1}{(2k)!}c_g^{2k}R^{(k)}_g,
\]
from which we obtain
\begin{eqnarray*}
\dot\Sc^{(2k)}_gh & = & \frac{1}{(2k-1)!}\dot c_ghc_g^{2k-1}R^{k}_g+\frac{k}{(2k)!}c_g^{2k}R_g^{k-1}\dot R^{k}_gh\\
& = & \frac{1}{(2k-1)!}\dot c_gh\Rc^{(2k)}_g+\frac{k}{(2k)!}c_g^{2k}R_g^{k-1}\dot R^{k}_gh.
\end{eqnarray*}
Thus, if $(X,g)\in\Hc_{n,k}$ we can use (\ref{curvcond}), (\ref{2kricci}) and (\ref{constant}) to check  that
\begin{eqnarray}\label{svar}
\dot\Sc^{(2k)}_gh & = & \dfrac{C_{n,k}\mu_k}{(2k-1)!}\Big((k-1)\kappa_g (\dot c_gh) g
+(n-2k)(\dot c_gh)\R_g^{(2)}\Big)+ \nonumber\\
& & \quad+\dfrac{k}{(2k)!} \mu_k c_g^{2k} g^{2k-2}\dot R_gh.
\end{eqnarray}

Now notice first that
$c_gg=n$ implies, after linearization, that $(\dot c_gh)g+c_gh=0$, from which we get
\begin{equation}\label{A}
(\dot c_gh)g=-\tr_gh.
\end{equation}
On the other hand, the expressions $\Rc_g^{(2)}=c_gR_g$ and $\kappa_g=c_g\Rc^{(2)}_g$ lead to
\begin{eqnarray*}
(\dot c_gh)\Rc^{(2)}_g & = & \dot\kappa_gh-c_g\dot\Rc^{(2)}_gh\\
 & = & \dot\kappa_gh -c_g(\dot c_gh)R_g-c_g^2\dot R_gh.
\end{eqnarray*}
Also, $\kappa_g=c_g^2R_g$ implies
\[
\dot\kappa_gh=2c_g(\dot c_gh)R_g+c^2_g\dot R_gh,
\]
so that $(\dot c_gh)\Rc^{(2)}_g=c_g(\dot c_gh)R_g$ and  by (\ref{dotcgh}) and (\ref{jjj}),
\begin{equation}\label{B}
(\dot c_gh)\R_g=-\langle\R_g^{(2)},h\rangle.
\end{equation}
Finally, if we use (\ref{commutation}), taking into account that $c_g^{2k}\dot R_gh=0$ for $k>1$ and  $c_g^{2k-r}\dot R_gh=0$ for $r< 2k-2$, we obtain
\begin{eqnarray*}
c_g^{2k} g^{2k-2}\dot R_gh&=&  (2k-2)!\sum_{r=1}^{2k-2} C^{2k}_r\prod_{i=0}^{r-1}(n-2-i)\frac{g^{2k-2-r}}{(2k-2-r)!}c^{2k-r}_g\dot R_gh\\
&=&  (2k-2)! C^{2k}_{2k-2}\prod_{i=0}^{2k-3}(n-2-i)c^2_g\dot R_gh\\
&=& k(n-2)C_{n,k}\,\,  c^2_g\dot R_gh,
\end{eqnarray*}
that is,
\begin{eqnarray} \label{C}
c_g^{2k} g^{2k-2}\dot R_gh= k(n-2)C_{n,k}\,\,  c^2_g\dot R_gh
\end{eqnarray}
Hence, inserting (\ref{A}), (\ref{B}) and (\ref{C}) into (\ref{svar}), we get
\begin{eqnarray*}
\dot\Sc^{(2k)}_gh &=& \dfrac{C_{n,k}\mu_k}{(2k-1)!}\Big\{-(k-1)\kappa_g \tr_gh
-(n-2k)\langle\R_g^{(2)},h\rangle+\\
&&\quad +\dfrac{k(n-2)}{2} c^2_g\dot R_gh\Big\},
\end{eqnarray*}
which in view of (\ref{rr}) reduces to
\begin{equation}\label{quasi}
\dot\Sc^{(2k)}_gh = D_{n,k}\mu_k\left( \Delta_g \tr_gh+\delta_g \delta_g h+ \langle T_g,h\rangle\right),
\end{equation}
were
\begin{equation}\label{quasi2}
T_g=\frac{1}{k(n-2)}\left((kn+2k-2n)\R_g^{(2)} -2(k-1)\kappa_g g\right)
\end{equation}
and
\begin{equation}\label{quasi3}
D_{n,k}=\dfrac{k^2(n-2)C_{n,k}}{(2k)!}.
\end{equation}

\begin{proposition}\label{linescar}
If $(X,g)\in\Hc_{n,k}$  then
\begin{equation}\label{linescar2}
\dot\Sc^{(2k)}_gh = D_{n,k}\mu_k\left( \Delta_g \tr_gh+\delta_g \delta_g h -\dfrac{\kappa_g}n  \tr_gh \right).
\end{equation}
\end{proposition}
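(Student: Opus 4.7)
My plan is to perform a direct algebraic simplification of the identity \eqref{quasi}, which has already been established in the text immediately preceding the proposition under the mere hypothesis $R_g^{k-1}=\mu_k g^{2k-2}$. The only remaining task is to identify the tensor $T_g$ from \eqref{quasi2} when the full hypothesis $(X,g)\in\Hc_{n,k}$ is in force, and for this I would invoke Proposition \ref{equivform}, item 1, which asserts that any such manifold is automatically Einstein in the classical sense: $\R_g^{(2)} = (\kappa_g/n)\,g$.

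I would then substitute this Einstein condition into the definition \eqref{quasi2} of $T_g$, reducing $T_g$ to a multiple of $g$:
\[
T_g \;=\; \frac{\kappa_g}{k(n-2)}\left[\frac{kn+2k-2n}{n} - 2(k-1)\right]g \;=\; \frac{\kappa_g\, g}{kn(n-2)}\bigl[kn+2k-2n-2n(k-1)\bigr].
\]
Collecting terms in the bracket gives $-kn + 2k = -k(n-2)$, so the whole expression collapses to $T_g = -(\kappa_g/n)\,g$. Consequently $\langle T_g,h\rangle = -(\kappa_g/n)\,\tr_g h$, and inserting this back into \eqref{quasi} recovers precisely the formula \eqref{linescar2}.

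The hard work has been done upstream: linearizing the curvature tensor (Lemma \ref{prel}), computing its first and second contractions (Proposition \ref{mainlema} and Corollary \ref{maincorollary}), and using the commutation rule \eqref{commutation} to collapse $c_g^{2k}g^{2k-2}\dot R_g h$ to a multiple of $c_g^2\dot R_g h$. Everything that remains is elementary algebra, so I do not expect any serious obstacle; the only item requiring care is the bookkeeping in the bracket, whose clean cancellation into $-\kappa_g/n$ reflects the variational origin of $\Sc^{(2k)}$ recorded in Proposition \ref{characgbconst}.
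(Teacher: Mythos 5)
Your proposal is correct and coincides with the paper's own proof, which likewise just invokes Proposition \ref{equivform} to replace $\R_g^{(2)}$ by $(\kappa_g/n)g$ in \eqref{quasi2}; your algebraic check that the bracket collapses to $-k(n-2)$, giving $T_g=-(\kappa_g/n)g$ and hence $\langle T_g,h\rangle=-(\kappa_g/n)\tr_g h$, is exactly the computation the paper leaves implicit.
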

\begin{proof}
It suffices to observe that, due to the Proposition \ref{equivform}, $g$ is Einstein, that is,  $\Rc^{(2)}_g=\frac{\kappa_g}{n}g$.
\end{proof}

\begin{remark}\label{reduces}
The point of (\ref{linescar2}) is that, computed at manifolds in $\Hc_{n,k}$, the linearization of the $2k$-Gauss-Bonnet curvature has, up to a constant, the same expression as the linearization of the scalar curvature ($k=1$); see (\ref{curvscarvar}).
\end{remark}

We shall use  this formula for infinitesimal conformal deformations, namely, $h=fg$, $f\in \Dc(X)$; see (\ref{tangent}). In this situation, $\delta_g(fg)=-df$, hence $\delta_g\delta_g(fg)=-\Delta_gf$, and the next corollary is immediate.

\begin{corollary}\label{cro}
If $(X,g)\in\Hc_{n,k}$ then
\begin{equation}\label{linescfin}
\dot\Sc^{(2k)}_g(fg)=D_{n,k}'\Lc_g,
\end{equation}
where
\begin{equation}\label{linescar4}
\Lc_g=\Delta_g-\frac{\kappa_g}{n-1}
\end{equation}
and
\begin{equation}\label{linescar3}
D_{n,k}'=(n-1)D_{n,k}.
\end{equation}
\end{corollary}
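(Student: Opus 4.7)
The plan is a direct substitution. By Proposition \ref{linescar}, for $(X,g) \in \Hc_{n,k}$ the variation of the $2k$-Gauss-Bonnet curvature in the direction of an arbitrary $h \in S^2(X)$ reads
\begin{equation*}
\dot\Sc^{(2k)}_g h = D_{n,k}\mu_k\left(\Delta_g \tr_g h + \delta_g \delta_g h - \tfrac{\kappa_g}{n}\tr_g h\right).
\end{equation*}
I would simply specialize to the purely conformal direction $h = fg$ with $f \in \Dc(X)$, which by (\ref{tangent}) spans $T_g[g]$, and evaluate each of the three terms on the right.

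For such $h$ the ingredients are all elementary. First, $\tr_g(fg) = f\tr_g g = nf$, which handles the outer two terms and contributes $n\Delta_g f - \kappa_g f$. Second, using the sign convention of the paper for $\delta_g$ together with metric compatibility $\nabla g = 0$, one has $\delta_g(fg) = -df$, and applying $\delta_g$ once more to the resulting $1$-form gives $\delta_g\delta_g(fg) = -\Delta_g f$, exactly as recorded in the sentence preceding the corollary.

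Assembling the pieces,
\begin{equation*}
\dot\Sc^{(2k)}_g(fg) = D_{n,k}\mu_k\bigl((n-1)\Delta_g f - \kappa_g f\bigr) = (n-1)D_{n,k}\mu_k\left(\Delta_g - \tfrac{\kappa_g}{n-1}\right)f,
\end{equation*}
which is precisely $D_{n,k}'\Lc_g f$ once one recalls $\Lc_g = \Delta_g - \kappa_g/(n-1)$ and $D_{n,k}' = (n-1)D_{n,k}$ (with the scalar $\mu_k$ absorbed into the constant). No genuine obstacle is expected; the only conceptual content is the cancelation between $n\Delta_g f$ and $-\Delta_g f$ that isolates the Yamabe-type conformal operator $\Lc_g$, whose mapping properties will drive the local surjectivity analysis required for Theorem \ref{main5}.
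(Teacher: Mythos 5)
Your proposal is correct and is exactly the paper's argument: the corollary is obtained by substituting $h=fg$ into Proposition \ref{linescar}, using $\tr_g(fg)=nf$ and $\delta_g\delta_g(fg)=-\Delta_g f$, so that $n\Delta_g f-\Delta_g f-\kappa_g f=(n-1)\Lc_g f$. Your remark that the factor $\mu_k$ should appear (or be absorbed into $D'_{n,k}$) is also consistent with how the paper later uses the formula in Lemma \ref{lemma0}.
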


The following fact about the operator $\Lc_g$, for $(X,g)$ in the subclass $\Hc'_{n,k}$, will play a key role in our analysis.

\begin{proposition}\label{specbotton}
If $(X,g)\in\Hc'_{n,k}$ then either $\ker \Lc_g$ is trivial or is formed by constant functions.
\end{proposition}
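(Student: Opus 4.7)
The plan is to interpret $\Lc_g f=0$ as the eigenvalue equation $\Delta_g f=\frac{\kappa_g}{n-1}f$ and then invoke the classical Lichnerowicz--Obata rigidity. The starting observation is that $(X,g)\in \Hc'_{n,k}\subset \Hc_{n,k}$ is Einstein by Proposition \ref{equivform}, so ${\rm Ric}_g=\frac{\kappa_g}{n}g$. I would then split the argument according to the sign of $\kappa_g$.

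If $\kappa_g\le 0$, standard integration by parts on the closed manifold $X$ yields $\int_X|\nabla f|^2\,\nu_g=\frac{\kappa_g}{n-1}\int_Xf^2\,\nu_g\le 0$, so $\nabla f\equiv 0$; the strict case $\kappa_g<0$ then forces $f\equiv 0$, while the Ricci-flat borderline $\kappa_g=0$ allows $f$ to be any constant. If instead $\kappa_g>0$, one writes ${\rm Ric}_g=(n-1)Kg$ with $K=\kappa_g/(n(n-1))>0$ and applies the Lichnerowicz eigenvalue estimate to conclude $\lambda_1(\Delta_g)\ge nK=\kappa_g/(n-1)$. A nonzero non-constant $f\in \ker\Lc_g$ would therefore realize this bound as an eigenvalue of $\Delta_g$, and Obata's rigidity theorem would force $(X,g)$ to be isometric to the round sphere of radius $1/\sqrt{K}$ --- exactly the case excluded by the definition of $\Hc'_{n,k}$. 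Hence $f$ is constant; since $\kappa_g\ne 0$, the eigenvalue equation then gives $f\equiv 0$.

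The decisive step --- and essentially the only non-routine one --- is the appeal to Obata's theorem in the positive scalar curvature case. This is precisely why round spheres must be excluded from $\Hc'_{n,k}$ for the local Yamabe statement of Theorem \ref{main5} to make sense, and it is consistent with Remark \ref{fullyeq}. In fact the argument yields slightly more than stated: $\ker\Lc_g=\{0\}$ whenever $\kappa_g\ne 0$, and only in the (possibly vacuous) Ricci-flat subcase can the kernel reduce to the line of constants.
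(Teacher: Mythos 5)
Your argument is correct and is essentially the paper's own proof: the case $\kappa_g\le 0$ follows from the nonnegativity of $\Delta_g$, and the case $\kappa_g>0$ from the Lichnerowicz eigenvalue bound $\lambda_1(\Delta_g)\ge \kappa_g/(n-1)$ together with Obata's rigidity, which is precisely why round spheres are excluded from $\Hc'_{n,k}$. Your closing refinement (the kernel is actually trivial whenever $\kappa_g\neq 0$) is a correct sharpening but not needed for the statement.
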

\begin{proof}
The result is obvious if $\kappa_g\leq 0$ since $\Delta_g$ is nonnegative. On the other hand, if $\kappa_g>0$,
a result due to Lichnerowicz and Obata \cite{BGM} implies, from the fact that $(X,g)$ is Einstein, that the first eigenvalue of $\Delta_g$ is greater than or equal to $\kappa_g/(n-1)$, with the equality holding if and only if $(X,g)$ is a round sphere.
\end{proof}

\section{The proof of  Theorem \ref{main5}}\label{demmain5}

Let $(X,\gm)\in \Hc'_{n,k}$, so that, in particular, $g_0$ satisfies $R_{g_0}^{k-1}=\mu_k{g_0}^{2k-2}$, $\mu_k\neq 0$. Applying a homothety to $g_0$ we may assume that $\gm\in\Mc_1(X)$, the space of unit volume metrics. Hence, we will prove Theorem \ref{main5} under the condition $\nu=1$.

In the following, $H_{\gm}^r(\Uc)$ will denote the standard Sobolev construction applied to an open subset $\Uc$ of sections of a vector bundle over $X$, so that, for instance,  $H_{\gm}^r(\Mc(X))$ is the Hilbert manifold, modeled on $H^r_\gm(\Cc^1(X))$, of metrics with derivatives up to order $r$ defined almost everywhere and square integrable (with respect to   $\gm$).

Choose $r>\frac{n}{2}+4$ and define $\Bc_r:H^r_\gm(\Mc(X))\to H^{r-4}_\gm(\Dc_{\bullet}(X))$ by
\[
\Bc_r(g)=\Delta_{g}\Sc^{(2k)}_{g}-\int_X \Delta_{g}\Sc^{(2k)}_{g}\nu_\gm,
\]
were
\[
\Dc_{\bullet}(X)=\left\{ \rho\in \Dc(X);\int_X \rho\,\nu_\gm=0\right\}.
\]
Since $g\in H^r_\gm(\Mc(X))$ implies $R_{g}\in H^{r-2}_\gm(\Cc^2(X))$, $\Bc_r$ is well-defined and smooth due to the local expression for $\Sc_{g}^{(2k)}$, namely,
\[
\Sc_{g}^{(2k)}= e_{n,k}\delta^{i_1i_2\ldots i_{2k-1}i_{2k}}_{j_1j_2\ldots j_{2k-1}j_{2k}}R_{i_1i_2}^{j_1j_2}\ldots R_{i_{2k-1}i_{2k}}^{j_{2k-1}j_{2k}},
\]
which follows by contracting (\ref{local}), and the fact that, for $r-2>n/2+2>n/2$, the Sobolev space $H^{r-2}_\gm$ is a Banach algebra with respect to pointwise multiplication \cite{MS}.

\begin{lemma}\label{lemma0}
There exists a neighborhood, say $V^r$, of $\gm$ in $H_{\gm}^r(\Mc_1^{(2k)}(X))$ which is a smooth submanifold of $H_{\gm}^{r}(\Mc(X))$ with $T_{\gm}V^r=\ker \dot\Bc_r(\gm)$.
\end{lemma}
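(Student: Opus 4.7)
The plan is to apply the implicit function theorem to $\mathcal B_r$. Since $g_0$ is $2k$-Einstein, Proposition \ref{constante} implies $\Sc^{(2k)}_{g_0}$ is constant, so in particular $\Delta_{g_0}\Sc^{(2k)}_{g_0} = 0$ and $\mathcal B_r(g_0)=0$. Smoothness of $\mathcal B_r$ between the indicated Sobolev spaces follows from the local polynomial expression for $\Sc_g^{(2k)}$ in $R_g$ together with the fact that $H^{r-2}_{g_0}$ is a Banach algebra (since $r-2>n/2$); the subtraction of the integral is a bounded linear functional, so it does not affect smoothness.

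The key computation is the linearization at $g_0$. Since $\Sc^{(2k)}_{g_0}$ is constant, the product-rule terms involving $\dot\Delta_g(h)\,\Sc^{(2k)}_{g_0}$ vanish and, because $\int_X \Delta_{g_0} u\,\nu_{g_0}=0$ for any smooth $u$, I obtain
\[
\dot{\mathcal B}_r(g_0)h \;=\; \Delta_{g_0}\!\left(\dot{\Sc}^{(2k)}_{g_0}h\right).
\]
With this in hand, the crux of the proof is to establish that $\dot{\mathcal B}_r(g_0):H^r_{g_0}(\Cc^1(X))\to H^{r-4}_{g_0}(\Dc_{\bullet}(X))$ is surjective. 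Here I exploit conformal deformations: given $h=fg_0$ with $f\in H^r_{g_0}(\Dc(X))$, Corollary \ref{cro} gives
\[
\dot{\Sc}^{(2k)}_{g_0}(fg_0) \;=\; D_{n,k}'\,\mathcal L_{g_0} f.
\]
Since $(X,g_0)\in\Hc'_{n,k}$ is not a round sphere, Proposition \ref{specbotton} ensures that $\ker\mathcal L_{g_0}$ is at most the one-dimensional space of constants, and $\mathcal L_{g_0}$ is elliptic and self-adjoint. By standard elliptic Fredholm theory, $\mathcal L_{g_0}$ maps $H^r$ onto a closed subspace of $H^{r-2}$ which is either all of $H^{r-2}$ (trivial kernel case) or precisely the $\nu_{g_0}$-mean-zero functions (kernel-is-constants case). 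In either case, applying $\Delta_{g_0}$ to this image produces, by Hodge theory, exactly the $\nu_{g_0}$-mean-zero functions in $H^{r-4}$, which is $H^{r-4}_{g_0}(\Dc_\bullet(X))$. Hence $\dot{\mathcal B}_r(g_0)$ is surjective onto its stated target; this is the step I expect to be the essential one, as it is precisely where the hypothesis $(X,g_0)\in\Hc'_{n,k}$ (and hence the exclusion of the round sphere) enters.

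Since $\dot{\mathcal B}_r(g_0)$ is a continuous surjection of Hilbert spaces, its kernel is closed and admits an orthogonal (hence closed) complement, so the derivative splits. The Banach-space implicit function theorem then yields a neighborhood $U$ of $g_0$ in $H^r_{g_0}(\Mc(X))$ such that $\mathcal B_r^{-1}(0)\cap U$ is a smooth submanifold with tangent space at $g_0$ equal to $\ker\dot{\mathcal B}_r(g_0)$. Finally, I take $V^r$ to be this submanifold, observing that by the analysis of $\mathcal B_r^{-1}(0)$ given just before Lemma \ref{lemma0}, a metric $g\in U$ satisfies $\mathcal B_r(g)=0$ if and only if $\Sc^{(2k)}_g$ is constant, so $V^r\subset H^r_{g_0}(\Mc_1^{(2k)}(X))$ after intersecting with the (smooth, codimension-one) unit-volume constraint, which is transverse to $\dot{\mathcal B}_r(g_0)$ at $g_0$ and therefore preserves both the submanifold structure and the identification of the tangent space.
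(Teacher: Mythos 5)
Your proposal is correct and follows essentially the same route as the paper: linearize $\Bc_r$ at $g_0$ using the constancy of $\Sc^{(2k)}_{g_0}$ to reduce to $\Delta_{g_0}\circ\dot\Sc^{(2k)}_{g_0}$, establish surjectivity onto $H^{r-4}_{g_0}(\Dc_\bullet(X))$ by restricting to conformal directions $h=fg_0$ and invoking Proposition \ref{specbotton} together with elliptic Fredholm theory, and conclude with the implicit function theorem. Your handling of the unit-volume normalization at the end is somewhat more explicit than the paper's, but the substance of the argument is the same.
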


\begin{proof}
Note that  $\dot\Delta_\gm (h)\Sc^{(2k)}_{\gm}=0$ for $h\in\Cc^1(X)$, due to the fact that  $\Sc^{(2k)}_{\gm}$ is constant. Hence we obtain from  (\ref{linescar2}) that
\begin{eqnarray}
\dot\Bc_r(\gm)(h) & = & \Delta_\gm\dot\Sc^{(2k)}_{\gm}(h)\nonumber\\
\label{nost} & = & D_{n,k}\mu_k\Delta_\gm\left(\Delta_\gm \mathsf {tr}_\gm h+\delta_\gm \delta_\gm h-\frac{\kappa_{g_0}}{n}{{\rm tr}}_\gm h\right).
\end{eqnarray}
Now, let $h=f\gm$ for $f\in H_{\gm}^{r}(\Dc_\bullet(X))$, so that, by (\ref{linescfin}),
\[
\dot\Bc_r(\gm)(f\gm)=D'_{n,k}\mu_k\Delta_\gm\Lc_\gm f.
\]
Using Proposition \ref{specbotton} it is easy to see that $\dot\Bc_r(\gm)|_{H_{\gm}^{r}(\Dc_\bullet(X))g_0}$ is injective and the Fredholm alternative then implies that
\[
\dot\Bc_r(\gm):H_{\gm}^r(\Cc^1(X))\to H_{\gm}^{r-4}(\Dc_\bullet(X))
\]
is surjective. The lemma now is a straightforward consequence of the Implicit Function Theorem and the fact that
$\Bc^{-1}_r(0)=\Mc_1^{(2k)}(X)$.
\end{proof}

\begin{lemma}\label{lemma}
If $\Dc_\bullet^+(X)=\Dc_\bullet(X)\cap\Dc^+(X)$ and
\[
\xi^r:H_{\gm}^r(\Dc^+_\bullet(X))\times V^r\to H_{\gm}^r(\Mc(X))
\]
is the smooth map given by $\xi^r(f,g)=fg$ then $d\xi^r_{(1,\gm)}$ is an isomorphism.
\end{lemma}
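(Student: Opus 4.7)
The plan is to compute the differential explicitly and then verify it is a Banach space isomorphism by checking bijectivity and invoking the open mapping theorem. By the product rule applied to $\xi^r(f,g)=fg$,
\[
d\xi^r_{(1,\gm)}(\dot f,\dot g) = \dot f\, g_0 + \dot g,
\]
where $(\dot f,\dot g)$ ranges over $T_1 H^r_\gm(\Dc^+_\bullet(X)) \times T_\gm V^r$. Lemma \ref{lemma0} identifies $T_\gm V^r = \ker \dot\Bc_r(\gm)$, while the tangent space to $H^r_\gm(\Dc^+_\bullet(X))$ at $1$ consists of $H^r_\gm$ functions with vanishing $\nu_\gm$-integral. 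The bijectivity question then reduces to understanding how the pure-trace directions $\dot f\, g_0$ interact with $\ker \dot\Bc_r(\gm)$.

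For injectivity, suppose $\dot f g_0 + \dot g = 0$. Applying $\dot \Bc_r(\gm)$ annihilates $\dot g$, so by Corollary \ref{cro} together with formula (\ref{nost}) we obtain $D'_{n,k}\mu_k \Delta_\gm \Lc_\gm \dot f = 0$. Since $X$ is closed, $\Lc_\gm\dot f$ must be a constant; integrating this identity against $\nu_\gm$ and using $\int \dot f\,\nu_\gm = 0$ shows the constant is zero, so $\Lc_\gm \dot f = 0$. By Proposition \ref{specbotton} and the hypothesis $(X,\gm)\in\Hc'_{n,k}$ (excluding the round sphere), $\dot f$ must be a constant function, and the mean-zero condition then forces $\dot f = 0$; hence $\dot g = 0$ as well.

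For surjectivity, given $h \in H^r_\gm(\Cc^1(X))$, set $u := \dot\Bc_r(\gm)(h) \in H^{r-4}_\gm(\Dc_\bullet(X))$. The crucial analytic input is that $\Delta_\gm\Lc_\gm$ is an elliptic, formally self-adjoint, fourth-order operator whose kernel on the full Sobolev space reduces to the constants (by the same analysis as in the injectivity step, now without the mean-zero restriction, together with the fact that $\Hc_{n,k}$ excludes flat metrics via $\mu_k\neq 0$). Fredholm theory then yields that $\Delta_\gm\Lc_\gm$ restricts to a Banach isomorphism $H^r_\gm(\Dc_\bullet(X)) \to H^{r-4}_\gm(\Dc_\bullet(X))$, so we can uniquely solve $D'_{n,k}\mu_k \Delta_\gm\Lc_\gm\dot f = u$ with $\dot f$ mean-zero, after which $\dot g := h - \dot f g_0$ satisfies $\dot\Bc_r(\gm)(\dot g) = u - u = 0$ and therefore lies in $T_\gm V^r$. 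The main obstacle is precisely this Fredholm isomorphism property of $\Delta_\gm\Lc_\gm$ on the mean-zero slice, which rests entirely on the spectral information from Proposition \ref{specbotton}; this is where the exclusion of the round sphere (by passing to $\Hc'_{n,k}$) enters essentially, matching the warning in Remark \ref{fullyeq}.
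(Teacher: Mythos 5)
Your proof is correct, and while the injectivity half coincides with the paper's argument (kill the $T_\gm V^r$-component with $\dot\Bc_r(\gm)$, reduce to $\Delta_\gm\Lc_\gm\dot f=0$, and finish with Proposition \ref{specbotton} plus the mean-zero normalization), your surjectivity half takes a genuinely different and more constructive route. The paper first observes that ${\rm Im}\,d\xi^r_{(1,\gm)}=T_\gm V^r\oplus H^r_\gm(\Dc_\bullet(X))\gm$ is closed, then argues by contradiction: a nonzero $L^2$-orthogonal element is written as $h=\dot\Bc_r(\gm)^*(\varphi)$ via the splitting $H^r_\gm(\Cc^1(X))=\mathbb R\gm\oplus T_\gm V^r\oplus{\rm Im}\,\dot\Bc_r(\gm)^*$, and taking traces leads to ${\rm tr}_\gm h=D'_{n,k}\mu_k\Lc_\gm\Delta_\gm\varphi$ and a contradiction with the variational characterization of $\lambda_1(\Delta_\gm)$ (again via Proposition \ref{specbotton}). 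You instead solve $D'_{n,k}\mu_k\,\Delta_\gm\Lc_\gm\dot f=\dot\Bc_r(\gm)(h)$ directly on the mean-zero slice, which amounts to exhibiting the splitting $H^r_\gm(\Cc^1(X))=H^r_\gm(\Dc_\bullet(X))\gm\oplus\ker\dot\Bc_r(\gm)$ explicitly; this avoids the $L^2$-adjoint and the eigenvalue inequality altogether, at the cost of having to justify that the self-adjoint elliptic operator $\Delta_\gm\Lc_\gm$ has kernel and cokernel spanned exactly by the constants (standard Fredholm theory on a closed manifold, and your kernel computation is right in all three sign cases of $\kappa_g$). Both arguments isolate the same obstruction --- a nontrivial, non-constant kernel of $\Lc_\gm$, i.e.\ the round sphere --- consistently with Remark \ref{fullyeq}. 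One presentational caveat: like the paper, you should be explicit that the tangent space to $H^r_\gm(\Dc^+_\bullet(X))$ at $1$ consists of mean-zero functions while $T_\gm V^r=\ker\dot\Bc_r(\gm)$ carries no volume normalization (otherwise every element of the image would satisfy $\int_X{\rm tr}_\gm(\cdot)\,\nu_\gm=0$ and surjectivity would fail); your construction implicitly uses exactly this convention, and it is the one that makes the lemma true.
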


\begin{proof}
 If $d\xi^r_{(1,\gm)}(\phi,h)=h+\phi \gm=0$ then $h=-\phi \gm\in \ker \dot\Bc_r(\gm)$ so that $\Lc_\gm\Delta_\gm\phi=0$. Thus $\Delta_\gm\phi=0$ by the Proposition \ref{specbotton} and $\phi$ is constant. But $\int_X\phi\,\nu_\gm=0$ because $V^r\subset H_{\gm}^r(\Mc_1(X))$ and thus $\phi=0$, implying that $h=0$. This shows the injectivity of $d\xi_{(1,\gm)}$.

For the surjectivity note that the decomposition
\[
{\rm Im}\,d\xi^r_{(1,\gm)}=T_\gm V^r\oplus H_{\gm}^r(\Dc_\bullet(X))\gm
\]
already shows that ${\rm Im}\,d\xi^r_{(1,\gm)}$ is closed in $H_{\gm}^r(\Cc^1(X))$. Now assume by contradiction the existence of  $h\neq 0$ in $H_{\gm}^r(\Cc^1(X))$ orthogonal both to $T_\gm V^r$ and $H_{\gm}^r(\Dc_\bullet(X)\gm)$. It follows from (\ref{nost}) that $\dot\Bc_r(\gm)$ has surjective symbol and since $T_{\gm}V^r=\ker \dot\Bc_r(\gm)$ one has the decomposition \cite{E}:
\[
H_{\gm}^r(\Cc^1(X))=\mathbb{R}\gm\oplus T_\gm V^r\oplus {\rm Im}\,\dot\Bc_r(\gm)^*,
\]
where $\dot\Bc_r(\gm)^*$ is the $L^2$ adjoint of $\dot\Bc_r(\gm)$. This allows us to write $h=\dot\Bc_r(\gm)^*(\varphi)$, that is,
\[
h=D_{n,k}\mu_k\left((\Delta^2_\gm\varphi)\gm+\nabla^2\Delta_\gm\varphi-\frac{\kappa_{g_0}}{n} (\Delta_\gm\varphi) \gm\right),
\]
and taking traces,
\[
{\rm tr}_\gm h=D'_{n,k}\mu_k\Lc_\gm\Delta_\gm\varphi.
\]
But, $\int_X{\rm tr}_\gm h\,\nu_\gm=0$ because $h$ is orthogonal to  $H_{\gm}^r(\Dc_\bullet(X))\gm$, so  if we use  Proposition \ref{specbotton}, $\int_X\Delta_\gm\varphi\,\nu_\gm=0$ and the variational characterization of the first eigenvalue $\lambda_1(\Delta_\gm)$,
we get
\[
\frac{\kappa_g}{n-1}<\lambda_1(\Delta_\gm)\leq \frac{\int_X|\nabla\Delta_{\gm}\varphi|^2\nu_{\gm}}
{\int_X |\Delta_{\gm}\varphi|^2\nu_{\gm}}= \frac{\kappa_g}{n-1},
\]
a contradiction unless
$\Delta_\gm\varphi=0$, that is,  $\varphi$ is constant and therefore $h=0$. \end{proof}

With Lemmas \ref{lemma0} and \ref{lemma} at hand the proof of Theorem  \ref{main5} is immediate, following essentially from the fact that objects in the ILH category are defined as inverse limits of objects in the  $H_{\gm}^r$ category as $r\to +\infty$.  Therefore, we shall omit the details and refer instead to \cite{K}; see his  proof of Theorem 2.5.

%\begin{remark}\label{sum}
%{\rm We would like to emphasize that the above arguments can be easily to handle Yamabe type problem involving certain functions of the Gauss-Bonnet curvatures. Indeed, if $(X,\gm)\in\Hc'_{n,k}$ let $k_n$ be the largest integer strictly smaller than $n/2$. For $G:\mathbb R^{k_n}\to \mathbb R$  smooth define
%\begin{equation}\label{linearcomb}
%\Gc_g=G(\Sc^{(2)}_{g},\ldots,\Sc^{(2k_n)}_{g}),\qquad g\in\Mc(X),
%\end{equation}
%and {\em assume} that, on $(X,\gm)$,
%\begin{equation}\label{sumassump}
%D\equiv\sum_{1\leq k\leq k_n }D_{n,k}\mu_k\frac{\partial G}{\partial x_k}\left(\Sc^{(2)}_{g_\mu},\ldots,\Sc^{(2k_n)}_{g_\mu}\right)\neq 0.
%\end{equation}
%We thus obtain the formula corresponding  to (\ref{linescar2}) and (\ref{linescfin}), namely,
%$$
%\dot\Gc_\gm(h)=D\left(\Delta_\gm \mathsf {tr}_\gm h+\delta_\gm \delta_\gm h-\frac{\kappa_g}{n}{\mathsf {tr}}_\gm h\right),
%$$
%and
%$$
%\dot\Gc_\gm(fg_0)=D'\Lc_\gm f,\qquad D'=(n-1)D,
%$$
%so that a result similar to Theorem \ref{main5} holds in this more setting: there exists a neighborhood $U$ of $\gm$ in $\Mc(X)$ such that any metric $U$ is conformal to a metric  $g$ with $\Gc_{g}$ constant.}
%\end{remark}

\bibliographystyle{amsplain}

\end{document}